\newcommand{\bF}{\mathcal{F}}
\newcommand{\eps}{\varepsilon}
\newcommand{\C}{\mathbb{C}}
\newcommand{\HH}{\mathfrak{H}}
\newcommand{\R}{\mathbb{R}}
\newcommand{\Z}{\mathbb{Z}}
\newcommand{\bN}{\mathbf{N}}
\newcommand{\bH}{\mathbf{I}}
\newcommand{\GL}{{\rm GL}}
\newcommand{\SL}{{\rm SL}}
\newcommand{\PSL}{{\rm PSL}}
\newcommand{\re}{\operatorname{Re}}
\newcommand{\im}{\operatorname{Im}}
\newcommand{\ee}{\mathbf{e}}
\newcommand{\ol}[1]{\overline{#1}}
\newcommand{\Mod}[1]{\ (\text{mod}\ #1)}
\renewcommand{\phi}{\varphi}
\newcommand*\pmat[4]{\begin{pmatrix}#1&#2\\#3&#4\end{pmatrix}}
\newcommand*\smat[4]{\begin{smallmatrix}#1&#2\\#3&#4\end{smallmatrix}}
\newtheorem*{theoremA}{Theorem A}
\newtheorem*{theorem*}{Theorem}
\newtheorem*{question}{Question}
\newtheorem{theorem}{Theorem}[section]
\newtheorem{lemma}{Lemma}[section]
\newtheorem{corollary}{Corollary}[section]
\newtheorem{conjecture}{Conjecture}[section]
\theoremstyle{definition}
\theoremstyle{plain}
\numberwithin{equation}{section} 
\author[Berghaus]{David Berghaus}
\address{Fraunhofer IAIS, Institute for Intelligent Analysis and Information  Systems, 53757 Sankt Augustin, Germany}
\email{s6ddberg@uni-bonn.de}
\author[Bondarenko]{Andriy Bondarenko}
\address{Department of Mathematical Sciences \\ Norwegian University of Science and Technology \\ NO-7491 Trondheim \\ Norway}
\email{andriybond@gmail.com}
\author[Radchenko]{Danylo Radchenko}
\address{Universit\'e de Lille, CNRS, Laboratoire Paul Painlev\'e \\ F-59655 Villeneuve d'Ascq \\ France}
\email{danradchenko@gmail.com}
\author[Seip]{Kristian Seip}
\address{Department of Mathematical Sciences \\ Norwegian University of Science and Technology \\ NO-7491 Trondheim \\ Norway}
\email{kristian.seip@ntnu.no}
\author[Sun]{Qihang Sun}
\address{Universit\'e de Lille, CNRS, Laboratoire Paul Painlev\'e \\ F-59655 Villeneuve d'Ascq \\ France}
\email{qihang.sun@univ-lille.fr}
\thanks{Bondarenko and Seip were supported in part by Grant 334466 of the Research Council of Norway. Radchenko and Sun acknowledge funding by the European Union (ERC, FourIntExP, 101078782). Part of this work was done while Bondarenko, Radchenko, and Seip did research in residence at Centre International de rencontres mathématiques (CIRM) whose support and hospitality are gratefully acknowledged.}
\title[The basis functions of Fourier interpolation]{The basis functions of Fourier Interpolation}
\date{}
\begin{document}
\begin{abstract}
	The basis functions of the Fourier interpolation formula of Radchenko and Viazovska, constructed by means of weakly holomorphic modular forms for the Hecke theta group, are entire functions of order $2$ having interesting time-frequency properties. We give precise size estimates and study the distribution of zeros of these functions. We give in particular asymptotic estimates for the location and the number of extraneous zeros on or close to the real line. This result reveals the surprising existence of Fourier nonuniqueness pairs whose apparent ``excess'' compared to the Fourier uniqueness pair of Radchenko and Viazovska may be made arbitrarily large. Our estimates also show that the basis functions fail to yield a Riesz basis in the Hilbert space used by Kulikov, Nazarov, and Sodin in their recent study of Fourier uniqueness pairs. Some numerical data are presented, suggesting additional fine scale properties.
\end{abstract}

\maketitle

\setcounter{tocdepth}{2}
\tableofcontents

\section{Introduction}
\label{sec:intro}

Our starting point is the following result of Radchenko and Viazovska~\cite{RV} about how to represent a function $f$ in terms of samples of $f$ itself and its Fourier transform
\[ \widehat{f}(\xi):=\int_{-\infty}^\infty f(x) e^{-2\pi i \xi x} dx. \]
\begin{theoremA}
	There exists a sequence of even Schwartz functions~$a_n\colon\R\to\R$
	with the property that for every even Schwartz function
	$f:\R\to\R$ we have
	\begin{equation} \label{eq:sqrt}
		f(x) = \sum_{n=0}^{\infty} f(\sqrt{n})a_n(x) + \sum_{n=0}^{\infty}\widehat{f}(\sqrt{n}) \widehat{a_n}(x) ,
	\end{equation}
	where each of the two series on the right-hand side converges absolutely
	for every real~$x$. The functions $a_n$ satisfy the following interpolatory properties: $a_n(\sqrt{m})=\delta_{n,m}$ and
	$\widehat{a_n}(\sqrt{m})=0$ when $m\ge 1$, and in addition
	\begin{equation} \label{eq:poisson} a_0(0)=\widehat{a_0}(0)=\tfrac{1}{2},  \quad a_{n^2}(0)=-\widehat{a_{n^2}}(0)=-1 , \quad a_{n}(0)=-\widehat{a_{n}}(0)=0 \quad \text{otherwise}. \end{equation}
\end{theoremA}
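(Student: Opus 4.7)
The plan is to follow the approach of Radchenko and Viazovska~\cite{RV}, which diagonalizes the problem along Fourier eigenspaces and then realizes the basis functions as Fourier coefficients of generating functions built from weakly holomorphic modular forms for the theta group $\Gamma_\theta = \langle T^2, S \rangle$. Writing $f = f_+ + f_-$ with $f_\pm := (f \pm \widehat{f})/2$, one has $\widehat{f_\pm} = \pm f_\pm$, so setting $a_n^\pm := (a_n \pm \widehat{a_n})/2$ reduces~\eqref{eq:sqrt} to showing, separately for each sign, that every even Schwartz $\pm 1$-eigenfunction $g$ of the Fourier transform admits the representation $g(x) = 2\sum_n g(\sqrt{n})\, a_n^\pm(x)$, where the $a_n^\pm$ are even Schwartz functions with $\widehat{a_n^\pm} = \pm a_n^\pm$, interpolation values $a_n^\pm(\sqrt{m}) = \tfrac{1}{2}\delta_{n,m}$ for $m \geq 1$, and the values at $x = 0$ dictated by~\eqref{eq:poisson}.

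To construct the $a_n^\pm$, I would introduce the generating functions
\[
F^\pm(\tau, x) := \sum_{n \geq 0} a_n^\pm(x)\, e^{i\pi n \tau}, \qquad \im \tau > 0,
\]
and translate the desired properties into conditions on $F^\pm$. The interpolation condition becomes $F^\pm(\tau, \sqrt{m}) = \tfrac{1}{2} e^{i\pi m \tau}$ for $m \geq 1$, while the Fourier eigenfunction condition, combined with the Gaussian transformation $\widehat{e^{i\pi\tau x^2}}(\xi) = (-i\tau)^{-1/2} e^{-i\pi\xi^2/\tau}$, forces $F^\pm$ to transform under $S \colon \tau \mapsto -1/\tau$ with weight $1/2$ up to a sign, and it is automatically invariant under $T^2 \colon \tau \mapsto \tau + 2$ because of its $q$-expansion. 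The explicit construction is then obtained by representing $F^\pm(\tau, x)$ as a contour integral of a weakly holomorphic weight $3/2$ modular form for $\Gamma_\theta$ against the Gaussian kernel $e^{i\pi z x^2}\,dz$, so that the $a_n^\pm$ emerge as the coefficients of the $q$-expansion at $i\infty$. The interpolation identities $a_n^\pm(\sqrt{m}) = \tfrac{1}{2}\delta_{n,m}$ are verified from the modular transformation law applied at $x = \sqrt{m}$, and the special values at $x = 0$ in~\eqref{eq:poisson} are read off from the Jacobi theta series $\theta(\tau) = \sum_{n \in \Z} e^{i\pi n^2 \tau}$ that naturally appears when $x = 0$.

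The final ingredient is the reconstruction formula. Given an even Schwartz Fourier eigenfunction $g$, one integrates $F^\pm(\tau, x)$ against $g$ along a path in the upper half-plane and deforms the contour through the fundamental domain of $\Gamma_\theta$; the contributions at the two cusps produce precisely the sampling sum $2\sum_n g(\sqrt{n})\, a_n^\pm(x)$, with the eigenvalue condition on $g$ ensuring that all other terms cancel. Absolute convergence of both sums in~\eqref{eq:sqrt} follows from polynomial-in-$n$ bounds on $a_n^\pm(x)$, locally uniform in $x$, which are a consequence of the polynomial growth of the $q$-coefficients of weakly holomorphic modular forms of negative weight, combined with the Schwartz decay of $f(\sqrt{n})$ and $\widehat{f}(\sqrt{n})$. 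The main obstacle, and the heart of the proof, is the explicit identification of weight $3/2$ weakly holomorphic modular forms for $\Gamma_\theta$ with exactly the right principal parts at the two cusps so that no unwanted terms survive the cusp analysis; this rests on a careful study of the ring of modular forms for $\Gamma_\theta$, which in low weight is generated by classical theta series and their ratios.
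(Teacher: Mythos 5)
Your overall skeleton is the Radchenko--Viazovska strategy that this paper itself relies on (Theorem~A is quoted from~\cite{RV}; Section~\ref{sec:definitiions} only recalls the construction of $g_n$ and $b_n^{+}$), and the algebra of your reduction is right: $f=f_++f_-$, $a_n^{\pm}=(a_n\pm\widehat{a_n})/2$, generating functions $F^{\pm}(\tau,x)$ required to satisfy a weight-$1/2$ functional equation under $S$ and periodicity under $T^2$, realized as contour integrals of weakly holomorphic forms for $\Gamma_\theta$, with the interpolation values read off by contour deformation. Two inaccuracies of structure, though: only the plus family comes from the weight-$3/2$ forms with multiplier $\nu_\theta^3$ (the minus family in~\cite{RV} uses weight $1/2$), and your reduction never engages with the fact that for a $-1$-eigenfunction $g$ Poisson summation forces $g(0)+2\sum_{k\ge1}g(k)=0$, which is exactly why perfect interpolation at $0$ is impossible for that sign and why the values in~\eqref{eq:poisson} have the stated shape; this obstruction must be built into the principal parts you prescribe, and ``the right principal parts at the two cusps'' cannot be chosen freely for the minus sign.

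The genuine gap is in your quantitative step. You justify absolute convergence (and implicitly the passage from special functions to general Schwartz functions) by ``polynomial growth of the $q$-coefficients of weakly holomorphic modular forms of negative weight.'' This is false on both counts: the weights here are $1/2$ and $3/2$, and the Fourier coefficients of the weakly holomorphic forms $g_n$ grow like $e^{2\pi\sqrt{mn}}$ --- see~\eqref{eq:aapprox} in this paper --- so no coefficient bound gives polynomial control of $a_n(x)$. The uniform bound $|a_n(x)|\ll n^{2}$ for real $x$ is one of the substantive technical inputs of~\cite{RV}: it comes from estimating $\tfrac12\int_{-1}^{1}g_n(z)e^{\pi i z x^2}\,dz$ with a contour adapted to the ratio $n/x^{2}$ (a circle-method type argument, the very technique refined in Sections~\ref{sec:positivesmall} and~\ref{sec:globalzeros} here), not from coefficient growth. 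Without it you have neither the absolute convergence asserted in Theorem~A nor the limiting argument needed at the end. Relatedly, your reconstruction step is too vague to carry the proof: in~\cite{RV} the functional equation~\eqref{eq:Ffeq} \emph{is} the interpolation formula for the symmetrized Gaussians $e^{\pi i\tau x^2}\pm(\tau/i)^{-1/2}e^{-\pi i x^2/\tau}$, and the general case follows by approximating an arbitrary even Schwartz eigenfunction by linear combinations of Gaussians and passing to the limit termwise, which is exactly where the polynomial bounds are used. ``Integrate $F^{\pm}$ against $g$ and deform through the fundamental domain so that the cusps give the sampling sums'' might be made into an alternative argument, but as written it does not specify the integral, justify the interchange of summation and integration, or explain how the eigenvalue condition cancels the remaining boundary contributions --- and it would require the same growth estimates you have not supplied.
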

It was shown in \cite[Prop. 4]{RV} that  \eqref{eq:sqrt} in fact holds on the milder assumption that $f(x)$ and $\widehat{f}(x)$ are both $O((1+|x|)^{-13})$. We refer to \cite[Thm. 7.1]{BRS} for a further refinement of that result.

Following \cite{RV}, we refer to \eqref{eq:sqrt} as a Fourier interpolation formula. Thanks to recent work of  Ramos and Sousa \cite{RS2}, Kulikov, Nazarov, and Sodin \cite{KNS}, and Lysen \cite{Ly} we have an abundance of Fourier interpolation formulas of the form
\begin{equation} \label{eq:general} f(x)=\sum_{\lambda\in \Lambda} f(\lambda) g_\lambda (x)+\sum_{w\in W} \widehat{f}(w) h_w(x) \end{equation}
for suitable sequences $\Lambda$ and $W$ of real numbers and associated functions $g_{\lambda}$ and $h_{w}$.  However, in view of the symmetry $\Lambda=W$  and the interpolatory properties of the basis functions $a_n$, \eqref{eq:sqrt} stands out as a canonical case of critical time--frequency density. It contrasts for example the formulas of \cite{KNS} which are all of super-critical density, meaning that there exists an $\varepsilon>0$ such that
\[ |\{\lambda\in \Lambda : |\lambda|\le R_1 \}| + |\{w\in W: |w|\le R_2 \}| \ge (4+\varepsilon)R_1R_2 \]
holds for all sufficiently large $R_1$ and $R_2$. Here our use of the terms ``critical density'' and ``super-critical density'' is justified by a theorem of Kulikov \cite{Ku}, which asserts that under certain mild restrictions on $f$ and the sequences $\Lambda$, $W$, $g_{\lambda}$, $h_w$, a formula like \eqref{eq:general} can only hold if we have
	\[ |\{\lambda\in \Lambda : |\lambda|\le R_1 \}| + |\{w\in W: |w|\le R_2 \}| \ge 4 R_1R_2 -C
	\log^2 (4R_1 R_2) \]
for $R_1, R_2 \ge 1$ and some constant $C$.

Striking a delicate balance with the uncertainty principle, the basis functions $a_n$ appear to merit closer investigation. The functions $a_n$, henceforth called \emph{the} basis functions of Fourier interpolation, are somewhat implicitly defined, however, by means of weakly holomorphic modular forms for the Hecke theta group (see Section~\ref{sec:definitiions} for the details). Some additional effort is therefore required to unravel their basic function theoretic properties. The objective of the present paper is to carry out that work in some detail, beyond what was done in \cite{RV,BRS}, to get a precise grip on the nature of the basis functions of Fourier interpolation.

Following \cite{RV}, we decompose $a_n$ as a sum of eigenfunctions of the Fourier transform, i.e., we write $a_n=(b_n^++b_n^-)/2$, where $\widehat{b_n^+}=b_{n}^+$ and $\widehat{b_n^-}=-b_n^-$. Since the properties of $b_n^{\pm}(x)$ are similar, we will concentrate only on the ``$+$'' case. As was proved in~\cite{RV}, the functions $b_{n}^{+}(x)$ are Fourier invariant Schwartz functions uniquely characterized by the property $b_{n}(\sqrt{m})=\delta_{n,m}$ for all $m\ge0$. It is convenient to change variables and consider
	\[f_n(x) := b_{n}^{+}(\sqrt{x}),\quad x\ge0\,.\]
The properties of these functions are the main subject of interest in this work. A definition of $f_n(x)$ as an integral transform of a certain modular form is given in Section~\ref{sec:definitiions}.

From \cite[Sect. 7]{BRS} we know that $f_n(x)\ll n^{1/4}(1+\log^2 n)$ holds uniformly in $x$. We are unable to improve that estimate in the uniform aspect, but the following bounds clarify quite precisely how the size of $|f_n(x)|$ depends on $x$ for $x>0$.

\begin{theorem}[Asymptotics for small real values] \label{thm:positiverealsmall}
	We have
	\begin{equation}
		\label{eq:outsidepw}
		|f_n(x)| \ll (1+\log^2 n) n^{1/4} (1+x)^{-1/4},\quad 0\le x \le n/8.
	\end{equation}
\end{theorem}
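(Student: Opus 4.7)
The plan is to deform the contour in the integral representation of $f_n$ as a transform of a weakly holomorphic modular form for the Hecke theta group, which will be recalled in Section~\ref{sec:definitiions}. Schematically this representation is
$$f_n(x) = \int_\gamma K_n(z)\, e^{\pi i x z}\, dz,$$
where $\gamma$ is (isotopic to) a horizontal line in the upper half-plane and $K_n$ is a weakly holomorphic modular form whose principal part at $i\infty$ behaves like $e^{-2\pi i n z}$. The uniform bound $f_n(x) \ll n^{1/4}(1+\log^2 n)$ from \cite[Sect.~7]{BRS} is obtained by placing $\gamma$ at a fixed imaginary height of order~$1$. To gain the extra decay factor $(1+x)^{-1/4}$, I would move $\gamma$ to an $x$-dependent saddle-point height.

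Writing $z=t+iy$, the kernel contributes $e^{-\pi x y}$ while the principal part of $K_n$ contributes $e^{2\pi n y}$, so the two exponentials balance at $y_\ast = \sqrt{n/x}$. In the range $0 \le x \le n/8$ we have $y_\ast \ge \sqrt{8}$, which lies well above the fundamental domain of the theta group, so the contour deformation is unobstructed and $K_n$ is given by a rapidly convergent Fourier expansion on the new contour. A standard stationary-phase evaluation of the principal-part contribution produces a Bessel-type main term of shape $n^{1/4}(1+x)^{-1/4}$ times an oscillating factor in $\sqrt{nx}$, which recovers the two leading factors in the claimed bound; for $x$ bounded one simply falls back on the \cite{BRS} estimate at height $\sim 1$, which is why the bound takes the form $(1+x)^{-1/4}$ rather than $x^{-1/4}$.

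The remaining work is to control the tail of the Fourier expansion $K_n(z) = \sum_m c_{n,m}\, e^{2\pi i m z}$. On the contour $\im z = y_\ast$, after integrating in $t$, the term indexed by $m$ contributes something of size at most $|c_{n,m}|$ times a Gaussian-type factor in $m$ minus the saddle index. Summing these using the polynomial coefficient bounds already exploited in \cite{BRS} should produce a total that retains the shape of the main term multiplied by a $(1+\log^2 n)$ factor, exactly as in \cite[Sect.~7]{BRS}. The main obstacle will be the bookkeeping for the near-principal range (i.e.\ $m$ close to $-n$): these terms are the source of the $\log^2 n$ loss, and I would have to verify that shifting the contour from height~$1$ to height~$y_\ast$ does not worsen this loss. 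The hypothesis $x \le n/8$ is used precisely to keep $y_\ast$ bounded away from zero so that no new singularity of $K_n$ is crossed and the rapid-convergence argument remains valid throughout.
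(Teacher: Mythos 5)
Your plan has a fatal sign error at its core. On a horizontal contour at height $y$, the principal part of $g_n$ contributes $|e^{-\pi i n z}|=e^{\pi n y}$ and the kernel contributes $|e^{\pi i x z}|=e^{-\pi x y}$: both are exponentials \emph{linear} in $y$, so their product $e^{\pi(n-x)y}$ has no interior balance point, and in the regime $0\le x\le n/8$ it simply grows as you raise the contour. The height $y_*=\sqrt{n/x}$ balances $e^{-\pi x y}$ against growth of the shape $e^{\pi n/y}$, which is the behavior of $g_n$ near the cusp $0$ (equivalently, near the real axis after applying $S$) — that is precisely the circle-method choice $\eps=\sqrt{n/x}$ used in Section~\ref{sec:fnasymp}, but there it pertains to \emph{negative} arguments, where $f_n(-x)$ genuinely is of exponential size $e^{2\pi\sqrt{nx}}/(2\sqrt n)$. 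At your proposed height $y_*\ge\sqrt8$ the principal-part term alone, after the $t$-integration, contributes about $e^{\pi(n-x)y_*}/(n-x)$ (the $t$-integral of $e^{\pi i (x-n)t}$ gives only a $1/(n-x)$ saving), i.e.\ something like $e^{c n}$ — astronomically larger than the target $n^{1/4}\log^2 n$ — and there is no stationary point in $t$ for this term, hence no Bessel-type main term of size $n^{1/4}(1+x)^{-1/4}$. (Your tail bookkeeping for the coefficients $a_{n,m}$, $m\ge1$, peaked near $m\approx x$, is in fact fine at that height; the problem is that the smallness of $f_n(x)$ for $x<n$ comes from cancellation \emph{between} the principal part and the rest, which no termwise estimate after deforming upward can see. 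Deforming upward is only legitimate for $x>n$, where it yields the Laplace representation \eqref{eq:fndef2}.)

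The paper's proof goes a genuinely different way: it does not deform the $z$-contour of \eqref{eq:fndef1} at all, but instead extracts $f_n(x)$ as the $n$-th Fourier coefficient of the generating function $F(\tau,x)=\sum_m f_m(x)e^{\pi i m\tau}$ (see \eqref{eq:gfseries}), integrating in the $\tau$-variable over $[-1+i/n,\,1+i/n]$, where $|e^{-\pi i n\tau}|$ is bounded. The size of $F(\tau,x)$ at low height is then controlled by the automorphic machinery of \cite{BRS}: the reduction-chain inequality \eqref{eq:start} with $\psi(z)=e^{\pi i x z}+(z/i)^{-1/2}e^{-\pi i x/z}$, the kernel bound of Lemma~\ref{lem:ker} giving $F_g(\tau)\ll x^{-1/2}\max(1,\mathbf{I}(\tau)^{-1})$ in the fundamental domain, the estimate $|\psi(\tau_i)|\,\eta_i^{1/4}\ll x^{-1/4}$, and finally Lemma~\ref{lem:Ib} and Lemma~\ref{lem:Nupper}, the latter producing the $\log^2 n$ factor from $\int_{-1}^1\mathbf N(t+i/n)\,dt$. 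If you want a route closer in spirit to yours, you would have to push the contour \emph{down} and run a full Farey-arc/Kloosterman-sum analysis (as in the Rademacher-type formulas of Section~\ref{sec:rademacher}), which is a much heavier argument than the single-saddle deformation you sketch and is not what the paper does for this theorem.
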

For larger values of $x$ the picture is much more precise.
\begin{theorem}[Asymptotics for large real values] \label{thm:positivereallarge}
	We have
    	\begin{equation}
		\label{eq:pw}
		\frac{f_n(x)}{\sin\pi(x-n)} =
        \begin{cases} \displaystyle
        O\Big(\frac{1}{\sqrt{n}}\Big(\frac{n-x}{\sqrt{n}}\Big)^{0.216}\Big), \qquad n/8 \le x < n-\sqrt{n},\\[15pt]
       \displaystyle
       \frac{1}{2\pi \sqrt{n} (\sqrt{x}-\sqrt{n})} + O(n^{-1/2}), \qquad n-\sqrt{n} \le x \le n+\sqrt{n},\\[15pt]
        \displaystyle
        \frac{e^{-\pi\sqrt{3}(\sqrt{x}-\sqrt{n})}}{\sqrt{n}} + O\Big(\frac{e^{-\pi\sqrt{11}(\sqrt{x}-\sqrt{n})}}{\sqrt{n}}\Big), \quad n+\sqrt{n} < x.
        \end{cases}
        \end{equation}
\end{theorem}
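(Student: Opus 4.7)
The starting point is the integral representation of $f_n$ developed in Section~\ref{sec:definitiions}. Following the Radchenko--Viazovska construction, $f_n(x)$ can be expressed as a line integral
\[
f_n(x)=\int_{\gamma} K(\tau,x)\,e^{-\pi i n \tau}\,d\tau,
\]
where $K(\tau,x)$ is assembled from weakly holomorphic modular forms for the Hecke theta group $\Gamma_\theta$ and satisfies $K(\tau,x)\sim e^{\pi i x \tau}$ as $\operatorname{Im}\tau\to\infty$. The outer factor $\sin\pi(x-n)$ visible in the theorem is already built into the representation: it arises from the period-$2$ structure of $\Gamma_\theta$, via the difference of exponentials $e^{\pi i(x-n)(1+it)}-e^{\pi i(x-n)(-1+it)}=2i\sin\pi(x-n)\cdot e^{-\pi(x-n)t}$ coming from the two vertical edges of the natural contour. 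The three regimes of the theorem correspond to three different ways of deforming $\gamma$.

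\textbf{Near-diagonal regime} $n-\sqrt n\le x\le n+\sqrt n$. Here the saddle of the phase $\pi i(x-n)\tau$ is essentially at infinity, while the modular factor ceases to be approximately constant outside a neighborhood of the elliptic point $\tau=i$ of $\Gamma_\theta$. Deforming $\gamma$ to pass through a neighborhood of $\tau=i$ and applying a Laplace-type evaluation gives a leading contribution proportional to $1/(\pi(x-n))$, which after the identity $x-n=(\sqrt x-\sqrt n)(\sqrt x+\sqrt n)=2\sqrt n(\sqrt x-\sqrt n)(1+O(n^{-1/2}))$ becomes the stated main term $1/(2\pi\sqrt n(\sqrt x-\sqrt n))$; the error $O(n^{-1/2})$ is controlled by a second-order Taylor expansion of both the phase and the modular factor around $\tau=i$.

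\textbf{Exponential decay regime} $x>n+\sqrt n$. Here $K(\tau,x)$ is re-expanded along the contour by applying the modular inversion $\tau\mapsto-1/\tau$, which sends the cusp at infinity to the other cusp and converts the leading behavior $e^{\pi i x\tau}$ into $e^{-\pi i x/\tau}$. On the imaginary axis $\tau=it$ this becomes $e^{-\pi x/t}$, and multiplying by a Fourier mode $e^{-\pi\beta t}$ coming from the expansion of the transformed kernel gives a saddle at $t\sim\sqrt{x/\beta}$ producing $\exp(-2\pi\sqrt{\beta x})$. The smallest admissible value $\beta=3/4$, corresponding to the leading nonconstant Fourier coefficient of the transformed weakly holomorphic form, yields the main contribution $e^{-\pi\sqrt 3\sqrt x}/\sqrt n$; combined with the analogous contribution $e^{\pi\sqrt 3\sqrt n}$ coming from the $n$-dependence, this produces $e^{-\pi\sqrt 3(\sqrt x-\sqrt n)}/\sqrt n$, and the next nonzero coefficient ($\beta=11/4$) supplies the stated $\sqrt{11}$ error.

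\textbf{Intermediate oscillatory regime} $n/8\le x<n-\sqrt n$. This is expected to be the main technical obstacle. The relevant saddle of the phase lies neither at infinity nor at $\tau=i$, and the modular factor oscillates rather than decays along the deformed contour. A uniform stationary-phase estimate must be carried out with careful control of the modular integrand as it navigates the interior of the fundamental domain of $\Gamma_\theta$. The particular exponent $0.216$ is presumably the outcome of balancing the stationary-phase gain $n^{-1/2}$ against a polynomial growth rate of $K(\tau,x)$ in the relevant interior region, with the precise exponent determined by sharp growth estimates for the weakly holomorphic modular form appearing in $K$. I expect this step to absorb the bulk of the technical effort: it will require both sharp pointwise bounds for $K(\tau,x)$ away from the elliptic point and a careful matching of those bounds to the stationary-phase asymptotics uniformly in $x$ over the large interval in question.
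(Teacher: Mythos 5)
There is a genuine gap, and it sits exactly where you yourself place ``the bulk of the technical effort'': the range $n/8\le x<n-\sqrt n$. Your proposal offers no argument there beyond the guess that the exponent $0.216$ arises from balancing a stationary-phase gain $n^{-1/2}$ against polynomial growth of the kernel; that is not where it comes from, and no contour deformation of the type you describe will produce it. The paper's chain is: the Laplace representation \eqref{eq:fndef2} together with the expansion \eqref{eq:gm1expansion} at the cusp $1$; the Rademacher-type asymptotics \eqref{eq:atildeapprox} for the coefficients $\tilde a_{n,\nu}$, proved via Kloosterman-sum estimates in Section~\ref{sec:rademacher}; these yield the uniform approximation $f_n(z)/\sin\pi(z-n)=\Phi(\sqrt z-\sqrt n)/\sqrt n+E_n(z)$ of Lemma~\ref{prop:fnapprox}, with $\Phi$ as in \eqref{eq:phidef}; then the functional equation of Lemma~\ref{prop:Psi} converts the problem for negative $w=\sqrt x-\sqrt n$ into bounding the explicit exponential sum \eqref{eq:psidef}; finally, exponent-pair estimates (Theorem~\ref{thm:lindelof}) give $\Psi(t)\ll t^{0.108}$, and the exponent $0.216=2\times 0.108$ appears after the substitution $t=w^2\asymp\big((n-x)/\sqrt n\big)^2$. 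Nothing in your sketch supplies either the coefficient asymptotics or the analytic continuation and functional equation of $\Phi$, which are the two ingredients the whole theorem rests on, and the number $0.216$ is a fact about the exponential sum $\sum_m m^{-1/2}\ee(\tfrac{3m-1}{8}-\tfrac{t}{2m})$, not about growth rates of $K(\tau,x)$ inside the fundamental domain.

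The other two regimes are also not secured by your outline. The main term $1/(2\pi\sqrt n(\sqrt x-\sqrt n))$ does not come from a Laplace evaluation near the elliptic point $\tau=i$: in the paper it is the simple pole of $\Phi$ at $w=0$ (equivalently, the blow-up of the Laplace integral \eqref{eq:fndef2} as $x\to n^{+}$ caused by the principal part $e^{-\pi i n\tau}$ at the cusp $\infty$), and the uniform error $O(n^{-1/2})$ on the whole window $|x-n|\le\sqrt n$ uses the boundedness of $\Phi(w)-1/(2\pi w)$ near $0$ together with the exponentially small $E_n$; a second-order Taylor expansion of the integrand at $\tau=i$ delivers neither. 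For $x>n+\sqrt n$ your saddle-point heuristic does identify the correct exponents $3/4$ and $11/4$, but to isolate the $\nu=0$ term with an error as small as $e^{-\pi\sqrt{11}(\sqrt x-\sqrt n)}/\sqrt n$ you must know $\tilde a_{n,0}$, and control all $\tilde a_{n,\nu}$, to the precision of \eqref{eq:atildeapprox}; that is precisely the Rademacher/Kloosterman input (Lemma~\ref{prop:rademacher} and its corollary) that your proposal never addresses, so even these two easier cases remain unproved as written.
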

The proof of Theorem~\ref{thm:positivereallarge} is based on the approximation (see Lemma~\ref{prop:fnapprox})
    \begin{equation*}
    \frac{f_n(z)}{\sin\pi(z-n)} = \frac{\Phi(\sqrt{z}-\sqrt{n})}{\sqrt{n}} + O_{\eps}\big(e^{\pi\sqrt{3}(\sqrt{n}/3-\re\sqrt{z})}\big)\,,
    \end{equation*}
valid for $\re \sqrt{z}\ge \sqrt{n}(\tfrac{1}{3}+\eps)$ (for any $\eps>0$), where the special function $\Phi(w)$ is meromorphic in $\C$, and defined for $\re w>0$ by the series
    \[\Phi(w) \coloneqq \sum_{n=0}^{\infty}\frac{e^{-2\pi w\sqrt{2(n+3/8)}}}{\sqrt{2(n+3/8)}}.\]
(The analytic continuation of $\Phi$ to $\C^{\times}$ is given in~\eqref{eq:phitaylor}.)
The three regimes in~\eqref{eq:pw} correspond to the behavior of $\Phi(w)$ for large positive $w$, for $w$ close to $0$, and for large negative values of $w$, respectively. 
The more difficult part of~\eqref{eq:pw} is the range $n/8\le x \le n-\sqrt{n}$, corresponding to $\Phi(w)$ for large negative $w$; it turns out that for $w<0$ the function $\Phi(w)$ behaves like the exponential sum
    \[\sum_{n=1}^{\infty}\frac{2\cos\big(\tfrac{3n-1}{4}\pi-\tfrac{w^2}{n}\pi\big)}{\sqrt{n}}.\]
Estimation of this sum gives the exponent $0.216$ in~\eqref{eq:pw}; it may be reduced to an arbitrary $\varepsilon>0$ according to the conjecture of exponent pairs. The effect of this exponential sum is visible in the plot of $f_{500}$ shown in Figure~\ref{fig:f500a}, which may also be compared with the estimates of Theorem~\ref{thm:positivereallarge}.

\begin{figure}[ht]
    \includegraphics[width=16cm,height=6.5cm]{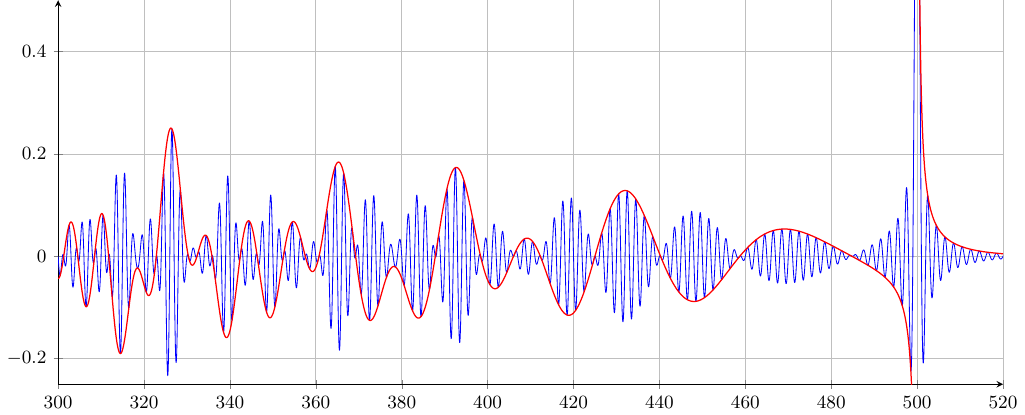}
    \caption{Plots of $f_{500}(x)$ (blue) and 
    $\frac{\Phi(\sqrt{x}-\sqrt{500})}{\sqrt{500}}$ (red).}
    \label{fig:f500a}
\end{figure}

Computing the second moment of the above exponential sum (see Theorem~\ref{thm:moment} below), we find that on average $f_n$ is indeed quite small on $[n/8,n]$, namely,
    \begin{equation} \label{eq:fnsecmom} 
    \int_{n/8}^n |f_n(x)|^2 dx \asymp \log n .\end{equation}
On the other hand, as we will see in Corollary~\ref{cor:RB}, the fact that the integrals in \eqref{eq:fnsecmom} are unbounded when $n\to \infty$, rules out the possibility that the series in  \eqref{eq:sqrt} can converge in the same robust way as the Fourier interpolation formulas of \cite{KNS}, i.e., in the sense of frames in the Hilbert space $\mathcal{H}$ which consists of functions $f$ satisfying 
\[  \| f\|_{\mathcal{H}}^2\coloneqq \int_{-\infty}^\infty \big(|f(x)|^2 +|\widehat{f}(x)|^2\big)(1+x^2) dx 
<  \infty.\]

Theorem~\ref{thm:positiverealsmall} and Theorem~\ref{thm:positivereallarge} together yield the $L^2$ bound
	\begin{equation} \label{eq:L2weak} \int_0^\infty |f_n(x)|^2 dx \ll n \log^4 n . \end{equation}
Averaging with respect to $n$, we get a result that indicates that there is room for a dramatic improvement of Theorem~\ref{thm:positiverealsmall}.

\begin{theorem}[Average of $L^2$ integrals with respect to $n$]\label{thm:sqrsum}
	We have
	\begin{equation} \label{eq:sqrsum} \xi \log \xi \ll  \int_0^\infty \sum_{n\le \xi} |f_n(x)|^2
		dx \ll \xi \log^2 \xi  \end{equation}
	for $\xi \ge 2$.
\end{theorem}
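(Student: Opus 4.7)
My plan is as follows. The lower bound follows immediately from the local second-moment estimate \eqref{eq:fnsecmom}: summing $\int_{n/8}^n |f_n(x)|^2\,dx \gg \log n$ over $n \le \xi$ yields $\int_0^\infty \sum_{n \le \xi}|f_n|^2\,dx \gg \xi \log \xi$.

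For the upper bound, I would split $\int_0^\infty |f_n|^2\,dx = \int_0^{n/8} + \int_{n/8}^\infty$. The integral over $[n/8, \infty)$ is $O(\log n)$: combine Theorem~\ref{thm:positivereallarge} (giving $O(1)$ on the peak $[n-\sqrt{n}, n+\sqrt{n}]$ and $O(1)$ on the exponential tail) with \eqref{eq:fnsecmom} (giving $O(\log n)$ on $[n/8, n-\sqrt{n}]$); summing over $n \le \xi$ yields $O(\xi \log \xi)$. The real difficulty is the range $[0, n/8]$: Theorem~\ref{thm:positiverealsmall} applied pointwise gives only $\int_0^{n/8}|f_n|^2\,dx \ll n \log^4 n$, which sums to the useless $\xi^2 \log^4 \xi$. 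Cancellation between distinct values of $n$ must be exploited.

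To exploit this cancellation, I swap the order of summation and integration,
\[
\sum_{n \le \xi}\int_0^{n/8}|f_n(x)|^2\,dx = \int_0^{\xi/8}\Bigl(\sum_{8x \le n \le \xi}|f_n(x)|^2\Bigr) dx,
\]
and introduce the generating function $F(\tau, y) := \sum_{n \ge 0} b_n^+(y)\, e^{\pi i n \tau}$, which converges for $\im \tau > 0$ by the polynomial growth of $b_n^+$. Plancherel applied to the $2$-periodic function $s \mapsto F(s + i/\xi, y)$ gives
\[
\sum_{n \le \xi}|b_n^+(y)|^2 \ll \sum_n |b_n^+(y)|^2 e^{-2\pi n/\xi} = \tfrac{1}{2}\int_0^2 |F(s + i/\xi, y)|^2\,ds,
\]
so after the change of variables $x = y^2$ the upper bound in \eqref{eq:sqrsum} reduces to proving
\[
\int_0^2 \int_0^\infty |F(s + i/\xi, y)|^2\, y\,dy\,ds \ll \xi \log^2 \xi.
\]

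To estimate this integral I would use the functional equation obtained by applying the interpolation formula \eqref{eq:sqrt} to the Fourier-invariantization of the complex Gaussian $u \mapsto e^{\pi i u^2 \tau}$,
\[
F(\tau, y) + (-i\tau)^{-1/2}\, F(-1/\tau, y) = e^{\pi i y^2 \tau} + (-i\tau)^{-1/2}\, e^{-\pi i y^2/\tau},
\]
together with the periodicity $F(\tau + 2, y) = F(\tau, y)$. These generate an action of the theta group $\Gamma_\theta$ that I would use to transport the line $\im \tau = 1/\xi$ (close to the cusp at $0$) into the fundamental domain, where $F$ is dominated by its leading Fourier coefficients $b_0^+(y) + b_1^+(y)e^{\pi i \tau} + \dots$. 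The Gaussian term $e^{\pi i y^2 \tau}$ on $\im \tau = 1/\xi$ integrates to $\xi/(4\pi)$ against the measure $y\,dy$ on $[0,\infty)$, accounting for the leading order $\xi$. The main obstacle is controlling this modular reduction uniformly in $s \in [0, 2]$: for $s$ close to a rational $p/q$, the corresponding $\Gamma_\theta$-element produces Gaussian-like contributions centered near $y^2 \asymp q^2 \xi$, and one must sum these over cosets of $\Gamma_\infty$ in $\Gamma_\theta$; the anticipated $\log^2 \xi$ factor should then arise from a logarithmic sum over cusp denominators $q \ll \sqrt{\xi}$ combined with a further logarithm from the cumulative weighted $L^2$ contributions.
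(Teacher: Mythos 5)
Your lower bound and your first reduction are fine and essentially coincide with the paper's starting point: the paper also sums \eqref{eq:fnsecmom} for the lower bound, and its upper-bound argument likewise begins by dominating $\sum_{n\le\xi}|f_n(x)|^2$ by an integral of the generating function at height $1/\xi$ (see \eqref{eq:l2sum}). The problem is your last paragraph: the inequality $\int_0^2\int_0^\infty |F(s+i/\xi,y)|^2\,y\,dy\,ds \ll \xi\log^2\xi$ \emph{is} the theorem, and you do not prove it. Saying that the functional equation and periodicity let one transport $\im\tau=1/\xi$ into the fundamental domain, and that ``the anticipated $\log^2\xi$ factor should arise from a logarithmic sum over cusp denominators,'' is a hope, not an argument. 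Concretely, transporting $\tau=s+i/\xi$ into $\bF$ requires iterating the functional equation along a chain of $\mathbf N(\tau)$ group elements, producing a sum of $\mathbf N(\tau)$ Gaussian-type terms (the analogue of \eqref{eq:start}); since $\int_{-1}^{1}\mathbf N(t+i/\xi)\,dt\asymp\log^2\xi$ (Lemma~\ref{lem:Nupper}), squaring this sum and estimating termwise loses at least two extra logarithms (and would in any case need a second-moment bound for $\mathbf N$, which is not available in the paper). Getting down to $\log^2\xi$ requires exploiting cancellation \emph{within} the chain and a genuinely quadratic counting input, and your sketch supplies neither.

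For comparison, the paper does not estimate the weighted $L^2$ norm of $F$ directly. It dualizes (see \eqref{eq:duality}), so that the quantity to bound becomes the linear functional $\int_{-1}^1 F_{H_t}(t+i/\xi)\,dt$ with $H_t$ ranging over $H^2(\HH)$ of controlled norm, and then proves the key estimate $\int_{-1}^1\mathbf C^2(t+i/\xi)\,dt\ll\xi^{1/2}\log^2\xi$ (Lemma~\ref{lem:Carl}). That estimate in turn rests on a cancellation lemma for the portion of the chain accumulating at the cusp $\pm1$ (Lemma~\ref{lem:cancel}), on Luecking's theorem on $L^1$ Carleson measures for $H^2$ (Lemma~\ref{lem:carllueck}), and on the geometric counting bound \eqref{eq:goal} controlling how many chain points can stack above one another. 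Your proposal contains no substitute for any of these three ingredients, so the upper bound in \eqref{eq:sqrsum} remains unproved; if you want to pursue your direct $L^2$ route, you would still need to produce analogues of Lemma~\ref{lem:cancel} and of the counting bound to handle the off-diagonal terms, which is exactly where the difficulty lies.
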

Theorem~\ref{thm:sqrsum} shows that the logarithmic power on the right-hand side of \eqref{eq:L2weak} can be reduced to $\log^2 n$, but it suggests the much stronger assertion that the integral in \eqref{eq:L2weak} is $O(\log^2 n)$. The lower bound in \eqref{eq:sqrsum} is immediate from \eqref{eq:fnsecmom}, so the main point of interest of Theorem~\ref{thm:sqrsum} is the upper bound, which is obtained by a refinement of the techniques from \cite{BRS} in combination with a result about $L^1$ Carleson measures for $H^2$ of the upper half-plane.
In Section~\ref{sec:numerics}, we give numerical evidence that the correct rate of growth in~\eqref{eq:sqrsum} should be $\xi\log\xi$.

We will refer to the zeros of $f_n(z)/\sin \pi (z-n)$ as the extraneous zeros of $f_n$. It turns out that there is an abundance of such zeros on or close to the positive real line when $n$ increases. We will obtain fairly precise estimates on the number and location of these zeros, and we will show that a good part of them actually lie on the real line. Our study of the extraneous zeros of $f_n$ will require the following asymptotic estimates.

\begin{theorem}[Asymptotics in $\mathbb C$ and on the negative real line] \label{thm:global}
	We have
	\begin{equation}
		\label{eq:cartw}
		f_n(x+iy) \ll  n^{\frac14} (1+\log^2 n) e^{\pi |y|} e^{2\pi \sqrt{n} \operatorname{Re} \sqrt{-z}}
	\end{equation}
	uniformly in $z=x+iy$ and $n$. Moreover,
	\begin{align} \label{eq:negint}
		f_n(-m) & = \frac{e^{2\pi \sqrt{nm}}}{2\sqrt{n}}
        \Big(1+O\big(m^2n^2e^{-\pi\sqrt{mn}}\big)\Big)\,, \quad m, n\in \Z_{>0} ; \\
		\label{eq:tomininf}	f_n(-x) & \sim \frac{e^{2\pi \sqrt{nx}}}{2\sqrt{n}}\,,
		\quad x \to +\infty .
	\end{align}
\end{theorem}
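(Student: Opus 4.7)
All three estimates follow from the integral representation of $f_n$ introduced in Section~\ref{sec:definitiions}, in which $f_n(z)$ is written as a contour integral
\[
f_n(z) \;=\; \int_{\gamma} F_n(\tau)\, e^{\pi i z \tau}\, d\tau
\]
of a weakly holomorphic modular form $F_n$ of weight $3/2$ for the Hecke theta group $\Gamma_\theta$, whose principal parts at the two cusps are explicitly known. Throughout, we use that the width-$2$ cusp at $\infty$ gives rise to expansions in $q = e^{\pi i \tau}$, so that the Rademacher/Bessel asymptotics of the Fourier coefficients of $F_n$ (at the opposite cusp) scale as $e^{2\pi\sqrt{mn}}$ rather than the more familiar $e^{4\pi\sqrt{mn}}$; this is the ultimate source of the exponent in \eqref{eq:negint} and \eqref{eq:tomininf}.

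For the global bound \eqref{eq:cartw}, note first that on the real axis the claim reduces, for $x\ge 0$, to Theorems~\ref{thm:positiverealsmall} and~\ref{thm:positivereallarge} (since $\operatorname{Re}\sqrt{-z}=0$), and for $x<0$ it reduces to \eqref{eq:tomininf}. For general $z=x+iy$, the factor $e^{\pi|y|}$ comes from the elementary bound $|e^{\pi i z \tau}|\le e^{\pi|y|}$ valid whenever $|\operatorname{Re}\tau|\le 1$ on $\gamma$, while the factor $e^{2\pi\sqrt{n}\operatorname{Re}\sqrt{-z}}$ is obtained by first applying the modular transformation $\tau\mapsto -1/\tau$ to express the integrand in terms of the cusp expansion of $F_n$ at the opposite cusp, and then shifting the contour through the saddle point of the transformed integrand near $\sigma^\ast = i\sqrt{n/(-z)}$, where the exponent attains the value $2\pi\operatorname{Re}\sqrt{-nz}=2\pi\sqrt{n}\operatorname{Re}\sqrt{-z}$. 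The prefactor $n^{1/4}(1+\log^2 n)$ is retained from the real-line estimate by carrying it through the saddle-point computation.

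For \eqref{eq:negint}, we specialize $z=-m$ with $m\in\Z_{>0}$ and evaluate the integral after the modular transformation $\tau\mapsto -1/\tau$; in the transformed variable the integrand becomes essentially the generating function for the Fourier coefficients of $F_n$ at the opposite cusp multiplied by $e^{\pi i m \sigma}$ (up to the weight-$3/2$ automorphy factor $\sigma^{-1/2}$), so that evaluating the integral by contour shift picks out, to leading order, the $m$-th such coefficient. By the explicit Bessel-function formula for this coefficient coming from the principal part of $F_n$ at the opposite cusp, the leading term equals $e^{2\pi\sqrt{mn}}/(2\sqrt{n})$. The error $O(m^2 n^2 e^{-\pi\sqrt{mn}})$ is obtained by bounding the subsequent terms in the Rademacher expansion and the contribution of the non-principal part of $F_n$, where the polynomial factor $m^2n^2$ reflects the crude estimates on these two tails.

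Statement \eqref{eq:tomininf} is then the continuous analog of \eqref{eq:negint}: the same saddle-point computation applies to arbitrary real $z=-x$ as $x\to +\infty$, producing the same leading term with a vanishing relative error, so the asymptotic equivalence follows without needing to track the polynomial error factor. \emph{The principal obstacle} will be the quantitative control of the error in \eqref{eq:negint}: bounding the non-principal contribution to the Fourier coefficient at the opposite cusp requires a uniform estimate that holds across the entire two-parameter range of $m$ and $n$, in particular near the crossover $m\sim n$ where the saddle-point approximation becomes delicate and both cusp contributions must be balanced carefully.
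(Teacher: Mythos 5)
Your plans for \eqref{eq:negint} and \eqref{eq:tomininf} are essentially the paper's route, but two details deserve correction. For \eqref{eq:negint} no transformation to the other cusp and no contour shift are needed: deforming the contour in \eqref{eq:fndef1} to a horizontal segment shows that $f_n(-m)$ \emph{is} the $m$-th Fourier coefficient $a_{n,m}$ of $g_n$ at $\infty$, so \eqref{eq:negint} is read off from the truncated Rademacher formula \eqref{eq:aapprox}; moreover $g_n$ has no principal part at the cusp $1$, so there is no ``non-principal part'' contribution to control. What is genuinely nontrivial is your claim that the error comes from ``crude estimates'' on the Rademacher tail: the sum over $c\ge2$ is only conditionally convergent (the terms with $c\gg\sqrt{mn}$ are of size $\sqrt{mn}/\sqrt{c}$), so one must exploit cancellation in sums of Kloosterman sums as in \eqref{eq:kloostermanpartialsums}; a termwise bound does not even converge. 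For \eqref{eq:tomininf} your saddle-point sketch matches the paper's circle-method argument (major arc $(-\eps,\eps)$ at height $\eps=\sqrt{n/x}$, minor arcs controlled via the invariant height and the bound $|g_n(\tau)|\ll e^{\pi n\max(2,\im\tau)}$ on $\mathcal F$, then $z\mapsto-1/z$ and Laplace's method), and this is fine since there $x\gg n$.

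The genuine gap is in \eqref{eq:cartw}. The saddle-point computation only yields $f_n(z)\ll e^{\pi|y|}e^{2\pi\sqrt{n}\re\sqrt{-z}}$ when the saddle is accessible, which in the paper means $x\le-n^2$ and $|y|\le4\sqrt{n|x|}$, while Lemma~\ref{prop:fnapprox} only covers $\re\sqrt{z}\ge2\sqrt{n}$. In the remaining region (moderately negative $x$ with $|x|<n^2$, points with $\re\sqrt z<2\sqrt n$ off the positive axis, large $|y|$ above the segment $[0,4n)$) your statement that the prefactor $n^{1/4}(1+\log^2 n)$ is ``retained from the real-line estimate by carrying it through the saddle-point computation'' is not an argument: the bounds of Theorem~\ref{thm:positiverealsmall} live on the real axis and do not propagate into the plane by themselves, and the saddle-point approximation degenerates exactly there. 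The paper closes this region with a separate maximum-modulus device: on $\Omega(n)=\{\re\sqrt z<2\sqrt n,\ \re z>-n^2\}\setminus[0,4n)$ it considers $G(z)=f_n(z)e^{-2\pi\sqrt n\sqrt{-z}}$, subtracts the cardinal series $G_0(z)=z\sum_{k\ne0}\frac{G(k)}{k}\frac{\sin\pi(z-k)}{\pi(z-k)}$ built from the integer values (controlled by \eqref{eq:negint} and by $f_n(m)=0$ for positive integers $m\ne n$), and applies the maximum principle to $(G-G_0)/\sin\pi z$, whose boundary values are $\ll n^{1/4}\log^2 n$ precisely because of the uniform real-axis bound on the slit $[0,4n)$ — this is where the prefactor actually enters. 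Without an argument of this Phragm\'en--Lindel\"of type, your proposal establishes \eqref{eq:cartw} only in the regimes where it is already easy (and where the prefactor is not even needed), not uniformly in $z$ and $n$ as claimed.
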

The global bound \eqref{eq:cartw} shows that $f_n$ belongs to the Cartwright class, which means that~$f_n$ is of exponential type $\pi$ and that
	\[ \int_{-\infty}^{\infty} \frac{\log^+ |f_n(x)|}{1+x^2} dx < \infty. \]
Quite accurate information about the global distribution of zeros of $f_n$ is therefore readily available. To state a precise result, we denote by $z_k$ the zeros of $f_n$, counting multiplicities in the usual way. Let $n_\pm(r,\alpha)$ be the number of zeros $z_k$ in the sectors $\{z: |z|<r, |\arg z|\le \alpha \}$ and $\{z: |z|<r, |\arg z-\pi|\le \alpha \}$, respectively. Then by a classical theorem in the theory of entire functions (see \cite[Ch. 17]{L}), we have the following.
\begin{itemize}
	\item[(a)] The Blaschke condition holds:
	\[ \sum_{z_k\neq 0} |\im (1/a_k)|  < \infty.\]
	\item[(b)] We have
	\[ \lim_{r\to \infty} \frac{n_+(r,\alpha)}{r} = \lim_{r\to \infty} \frac{n_-(r,\alpha)}{r}=1\] for every $\alpha$ in $(0,\pi)$.
	\item[(c)] The limit \[ \lim_{R\to \infty} \sum_{0<|a_k|\le R} \re (1/z_k) \]  exists.
\end{itemize}
We see from (b) that the density of zeros in any narrow sector $|\arg z - \pi | <\varepsilon$ is $1$, but, in view of \eqref{eq:tomininf}, only finitely many of them can lie on the real line. This is in contrast to what we have in the ``opposite'' sector $|\arg z | < \varepsilon$, where indeed most of the zeros lie on the real line. To quantify more precisely the amount of extraneous zeros on or close to the positive real half-line, it is convenient to introduce the set 
  \[  \Delta(t,n)\coloneqq \{z: \  0<\operatorname{Re}z\le n, \ 3\le \operatorname{Re} (\sqrt{z}-\sqrt{n})^2 \le t, |\operatorname{Im} (\sqrt{z}-\sqrt{n})^2|\le \log n \}.\]
Our results about extraneous zeros can now be summarized as follows.  
\begin{theorem} \label{thm:zeros} The function $f_n(z)/\sin \pi z$ has for sufficiently large $n$
	\begin{itemize}
		\item[(i)] $x+O(1)$ zeros in $\Delta(x,n)$ for $3< x \le n/2 $; 
		\item[(ii)] $\gg \sqrt{x}/\log^2 n$ real zeros in $\Delta(x,n)$ for $\log^2 n <x< n/2$;
		\item[(iii)] $\gg x^{0.784}$ nonreal zeros in $\Delta(x,n)$ for $\log n<x<n/2$;
        \item[(iv)]
		$\asymp \sqrt{rn}$ zeros in the rectangle $0\le \re z \le r$, $|\im z|\le r$
		when $10\le r \le n/8$.
	\end{itemize}
\end{theorem}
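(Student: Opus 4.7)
The plan is to reduce everything to the study of $\Phi(w)$ via the changes of variable $w=\sqrt{z}-\sqrt{n}$ and $s=w^{2}$. Lemma~\ref{prop:fnapprox} gives
\[
\frac{f_n(z)}{\sin\pi(z-n)}=\frac{\Phi(w)}{\sqrt{n}}+O_{\eps}\bigl(e^{\pi\sqrt{3}(\sqrt{n}/3-\re\sqrt{z})}\bigr),
\]
whose error is exponentially negligible throughout $\Delta(x,n)$, so extraneous zeros of $f_n$ become (up to negligible error) zeros of $\Phi$. Under $s=w^{2}$ the set $\Delta(x,n)$ maps onto the rectangle $R_x=\{s:3\le\re s\le x,\ |\im s|\le\log n\}$, and the dual expansion
\[
\Phi(w)=2\sin(\pi s)+\sum_{m\ge 2}\frac{2\cos\bigl((3m-1)\pi/4-\pi s/m\bigr)}{\sqrt{m}}
\]
drives the analysis: on the horizontal sides $|\im s|=\log n$ the $m=1$ term has modulus $\asymp n^{\pi}$ while the tail is $O(n^{\pi/2})$.

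Parts \textbf{(i)} and \textbf{(iv)} follow from Rouch\'e/argument-principle counts on suitable rectangular domains. For \textbf{(i)}, compare $\Phi$ with $2\sin(\pi s)$ on $R_x$: the horizontal sides are handled by the dominance just noted, and on the vertical sides $\re s\in\{3,x\}$ one shifts the boundary by a bounded amount to ensure $|\sin(\pi s)|\gtrsim 1$, then controls the tail via Theorem~\ref{thm:positivereallarge} on the real portion and the off-axis growth of the cosine terms elsewhere. Since $2\sin(\pi s)$ has exactly $\lfloor x\rfloor-2$ zeros in $R_x$, all real, the count is $x+O(1)$. For \textbf{(iv)}, in the rectangle $\{0\le\re z\le r,\ |\im z|\le r\}$ the variable $w$ stays within $O(\sqrt{r})$ of $-\sqrt{n}$, so from $w^{2}=n-2\sqrt{nz}+z$ and $\sin(\pi n)=0$ one obtains $\sin(\pi w^{2})=(-1)^{n}\sin(\pi z-2\pi\sqrt{nz})$, whose $\asymp\sqrt{rn}$ zeros (concentrated near $z\approx k^{2}/(4n)$, essentially all real) in the rectangle transfer to $f_n$ by Rouch\'e, with the Lemma~\ref{prop:fnapprox} error controlled by~\eqref{eq:cartw} and the tail by $\sum_{m\ge 2}e^{\pi|\im w^{2}|/m}/\sqrt{m}\ll 1$ on the boundary.

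For \textbf{(ii)}, set $R(s):=\Phi(w)-2\sin(\pi s)$ and transport the second-moment identity~\eqref{eq:fnsecmom} to the $s$-variable via $dx=2(w+\sqrt{n})\,dw$ and $dw=ds/(2\sqrt{s})$, obtaining
\[
\int_{3}^{x}|R(s)|^{2}\,\frac{ds}{\sqrt{s}}\ll \sqrt{x}\,\log^{2}n.
\]
Partition $[3,x]$ into $\asymp\sqrt{x}$ sub-intervals $I_j$ of length $\asymp\sqrt{x}$. By Markov, on $\gg\sqrt{x}/\log^{2}n$ of these sub-intervals the mean of $|R|^{2}$ on $I_j$ is $O(1)$, so a second Chebyshev yields $|R|<2$ on at least a positive-density subset of $I_j$; hence two consecutive midpoints $s_k,s_{k+1}\in I_j$ of opposite $\sin(\pi s)$-sign can be found on which $|R|<2$, forcing a sign change of $2\sin(\pi s)+R(s)$ between them. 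Transporting back via the real diffeomorphism $z=(\sqrt{n}-\sqrt{s})^{2}$ produces the claimed $\gg\sqrt{x}/\log^{2}n$ distinct real zeros in $\Delta(x,n)$. For \textbf{(iii)}, by (i) it suffices to show an upper bound $N_{\mathrm{r}}\le x-cx^{0.784}$ on the count of real zeros in $R_x$; this comes from a Jensen/Carleman-type argument on $R_x$ in which the polynomial bound $|\Phi(w)|\ll|w|^{0.216}$ on the real portion of $\partial R_x$ (from Theorem~\ref{thm:positivereallarge}) is balanced against the logarithmic mass $\int_3^x\log\prod_j|s-r_j|\,ds$ forced by $N_{\mathrm{r}}$ putative real zeros $r_j$, with the boundary contribution on the non-real parts of $\partial R_x$ bounded via the dual-series dominance. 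Balancing the two sides yields precisely the exponent $1-0.216=0.784$.

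The delicate step is \textbf{(iii)}: matching the exponents $0.216$ and $0.784$ requires sharp tracking of the Jensen-type boundary contributions over both the real and non-real portions of $\partial R_x$, so the argument is tied to the exact exponent-pair input feeding Theorem~\ref{thm:positivereallarge}; any weakening of that input would correspondingly weaken (iii), and any strengthening (conjecturally to $\varepsilon$) would upgrade (iii) to $\gg x^{1-\varepsilon}$.
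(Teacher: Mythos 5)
Your reduction to $\Phi$ and to the exponential sum $\Psi(s)=2\sin(\pi s)+\sum_{m\ge2}(\cdots)$ is the right starting point for (i)--(iii), but each part of the argument has a genuine gap, and (ii) and (iv) are unsound as stated. For (i): Rouch\'e against $2\sin(\pi s)$ fails on the vertical sides of $R_x$ near the real axis, because the tail $\Psi(s)-2\sin(\pi s)$ is \emph{not} bounded there -- the best pointwise bound available is $\ll x^{0.108}$ (Theorem~\ref{thm:lindelof}), and Theorem~\ref{thm:moment} shows its mean square per unit length is $\asymp\log x$, so no shift ``by a bounded amount'' can guarantee $|\Psi-2\sin\pi s|<|2\sin\pi s|$ on a whole vertical segment; the paper instead runs the argument principle directly and controls the vertical sides by showing $\tfrac{d^2}{dy^2}\re F(k+iy)$ has constant sign, so the real part changes sign at most twice there (Lemma~\ref{lem:regularzeros}). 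For (ii): your Chebyshev step is quantitatively false. Since $\int_{x/2}^x|R(s)|^2ds\asymp x\log x$, the average of $|R|^2$ over a subinterval of length $\asymp\sqrt x$ is $\asymp\log x$, not $O(1)$; Markov only bounds the number of intervals where the mean is \emph{large}, and it is perfectly consistent with the second moment that $|R|\asymp\sqrt{\log x}$ everywhere (even with constant sign), in which case $2\sin(\pi s)+R$ need have no real zeros at all. Second-moment information alone cannot produce sign changes; the paper's proof rests on the first-moment cancellation $\int_T^{T+V}\Psi\ll\sqrt T\log T$ for $V\ge\sqrt T\log^2T$ (Lemma~\ref{lem:integral}), played against $\int_T^{T+M}|\Psi|\ge1$ (Lemma~\ref{lem:short}); this ingredient is entirely missing from your sketch, and it is exactly what forces the $\sqrt x/\log^2$ scale.

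For (iii), a Jensen/Carleman balancing that uses only the \emph{upper} bound $|\Psi|\ll x^{0.216/2}$ cannot force zeros off the real axis: nothing in such an inequality excludes the scenario that $\Psi$ stays close to $2\sin(\pi s)$ with all zeros real. The exponent $0.784$ in the paper comes from a lower-bound input you never invoke: by Theorem~\ref{thm:moment} and Theorem~\ref{thm:lindelof}, $x\log x\ll_M x+|E(M)|\,x^{0.216}$, so the set where $|\Psi|\ge M$ has measure $\gg x^{0.784}$; over each interval of that set the local count from (i) still produces roughly one zero per unit length, and since $|\Psi|\ge M/2>0$ on the real points there, those zeros must be nonreal. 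Finally, (iv) cannot be proved the way you propose: Lemma~\ref{prop:fnapprox} requires $\re\sqrt z>(1/3+\eps)\sqrt n$, whereas in the rectangle $0\le\re z\le r\le n/8$ one has $\re\sqrt z\ll\sqrt r\ll\sqrt n/3$, so the ``error'' $E_n(z)$ is exponentially large and no Rouch\'e comparison with $\sin(\pi z-2\pi\sqrt{nz})$ is available; your claim that the zeros sit near $k^2/(4n)$ and are essentially real is unsupported and stronger than anything known (this is precisely the chaotic region of Theorem~\ref{thm:positiverealsmall}). The paper handles (iv) with a different toolkit: the circle-method bounds \eqref{eq:cartw} and \eqref{eq:negint}, Jensen's formula centered at $-1$ for the upper bound, and, for the lower bound, Jensen's formula applied to $f_n\circ\varphi_\pm$ for suitable M\"obius self-maps of a disc of radius $Mr$, which separates the contribution of the (dense, known) zeros near the negative axis from the zeros with positive real part and yields $n_+(Mr)\gg\sqrt{rn}$. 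A small additional point: your ``dual expansion'' of $\Phi(w)$ omits the term $-\Phi(-w)$ coming from \eqref{eq:phifeq}; it is harmless where $\re(-w)\gg1$, but it is not an identity as written.
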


Theorem~\ref{thm:zeros} has a curious implication concerning Fourier uniqueness and nonuniqueness pairs. To make this point precise, we begin by recalling from \cite{KNS} that $(\Lambda,W)$, with $\Lambda$ and $W$ two sequences of real numbers, is said to be a Fourier uniqueness pair for a class of functions $H$ if the vanishing of a function $f$ in $H$ on $\Lambda$ and that of its Fourier transform $\widehat{f}$ on $W$ imply that $f\equiv 0$. Conversely, the pair $(\Lambda,W)$ is said to be a Fourier nonuniqueness pair for $H$ if there exists a nontrivial function $f$ in $H$ with $f$ and $\widehat{f}$ vanishing respectively on $\Lambda$ and $W$. We may in this definition allow a point to appear a multiple number of times in $\Lambda$ or $W$ and interpret in the usual way the vanishing of $f$ or $\widehat{f}$ at such a point in terms of multiplicities.

Now let $\Xi_n$ be the set of extraneous positive real zeros of $b_n$. According to Theorem~\ref{thm:zeros}, the cardinality of $\Xi_n$ is $\gg \sqrt{n}/\log^2n$. We now set  $\Lambda_n\coloneqq  \{\sqrt{k}: k\ge 0, k\neq n\} \bigcup \Xi_n$ and observe that $(\Lambda_n,\Lambda_n)$ is a Fourier nonuniqueness pair for even Schwartz functions. The striking point is that if we trade two points from the sequence $\Lambda\coloneqq \{ \sqrt{k}: k\ge 0\}$ (for which $(\Lambda,\Lambda)$ is a Fourier uniqueness pair for even Schwartz functions) for $\gg \sqrt{n}/\log^2 n$ points, then we end up with a Fourier nonuniqueness pair.
 
\subsection*{Outline of the paper} This paper contains 7 additional sections. We present the basic definitions in Section~\ref{sec:definitiions}, including a very brief discussion of Kloosterman sums. In Section~\ref{sec:rademacher}, we deduce Rademacher-type formulas for the coefficients of weakly holomorphic modular forms $g_n$ that appear in the definition of $f_n$, which paves the way for the definition of the special function $\Phi$ in Section~\ref{sec:Phi}. A key point of Section~\ref{sec:Phi} is that $\Phi$ satisfies a functional equation which allows us to estimate it in terms of a certain exponential sum. We then prove Theorem~\ref{thm:positivereallarge}, establish the second moment of $\Phi$ (which in turn yields \eqref{eq:fnsecmom}), and deduce also items (i)--(iii) of Theorem~\ref{thm:zeros}. In Section~\ref{sec:positivesmall}, we develop the techniques from \cite{BRS} to prove Theorem~\ref{thm:positiverealsmall} and the upper bound in Theorem~\ref{thm:sqrsum}. In Section~\ref{sec:globalzeros}, we apply a version of the circle method to establish Theorem~\ref{thm:global}. We then  combine that theorem suitably with Jensen's formula to deduce the final item (iv) of Theorem~\ref{thm:zeros}. The fact that the basis functions do not give a frame (or more precisely a Riesz basis) for the even functions in~$\mathcal{H}$ is established in Section~\ref{sec:RB}. Finally, in Section~\ref{sec:numerics}, we present some numerical work and data to obtain more insight into the behavior of $f_n$. Based in part on this information, we formulate a number of questions and conjectures which we hope may inspire further investigations.

\section{Definitions}
\label{sec:definitiions}
In this section we briefly recall the construction of $f_n(x)$ (or $b_n^{+}(x)$) from~\cite{RV} via contour integrals of weakly holomorphic modular forms for the theta group $\Gamma_{\theta}$. We will also recall some basic definitions about Kloosterman sums that will be used in Section~\ref{sec:rademacher}.

\subsection{Weakly holomorphic modular forms \texorpdfstring{$g_n(\tau)$}{g\_m(tau)}}
Recall that
	\[\Gamma_{\theta}=\big\{\gamma \in \PSL_2(\Z)\colon \gamma\equiv (\smat 1001) \mbox{ or }  (\smat 0110) \Mod{2} \big\}\]
is generated by $S=(\smat{0}{-1}{1}{0})$ and $T^2=(\smat 1201)$ (with $T=(\smat 1101)$), which are subject to the one relation $S^2=1$. We will use the following fundamental domain for $\Gamma_{\theta}$: 
    \[\mathcal{F}:=\{\tau\in\HH\colon |\re\tau|<1, |\tau|>1\},\]
where $\HH$ denotes the upper half-plane in the complex plane (see Figure~\ref{fig:jell}).

Let $j(\gamma,\tau)=c\tau+d$ be the automorphic factor for $\gamma=(\smat abcd)$ in $\Gamma_\theta$, and define the Jacobi theta function as
\begin{equation}\label{eq:define_theta}
    \theta(\tau):=\sum_{n\in \Z} e^{\pi i n^2 \tau}.
\end{equation}
It satisfies the transformation law
    \begin{equation}\label{eq:theta_feq}
    \theta(\gamma \tau)=\nu_\theta(\gamma) j(\gamma, \tau)^{1/2}\theta(\tau),\quad \gamma\in \Gamma_\theta,
    \end{equation}
where the argument of $j(\gamma,\tau)$ is fixed to lie in $(-\pi,\pi]$ and $\nu_\theta$ is a weight $1/2$ multiplier system given by
\begin{equation}
    \nu_\theta(\gamma):=\left\{\begin{array}{ll}
        \eps_d^{-1}(\tfrac{2c}d), &\ c\equiv 0\Mod 2 ,\\
        \ee(-\tfrac 18)\eps_c (\tfrac{2d}c), &\ c\equiv 1\Mod 2, 
    \end{array}\right. \quad 
    \eps_d:=\left\{\begin{array}{ll}
         1, &\ d\equiv 1\Mod 4, \\
         i, &\ d\equiv 3\Mod 4, \\
         0, &\ \text{otherwise},
    \end{array}\right. 
\end{equation}
for $c>0$, and $\nu_\theta(-\gamma):=i\nu_\theta(\gamma)$ for $c<0$. Here $(\tfrac{a}{n})$ is the Kronecker symbol (see \cite[p.~32]{Mu} and \cite[\S2]{RadchenkoSun} for details) and
    \[\ee(\tau):=e^{2\pi i \tau}\,, \qquad \tau \in \C\,.\]

By $g_n(\tau)$, $n\ge0$, we will denote the unique weakly holomorphic modular form of weight $3/2$ for the group $\Gamma_{\theta}$, with multiplier system $\nu_{\theta}^{3}$, such that it vanishes at the cusp $1$ and satisfies
	\[g_{n}(\tau)=e^{-\pi i n\tau}+O(e^{\pi i \tau})\]
as $\tau\to i\infty$ (see~\cite[\S3]{RV}). The $0$th function $g_0(\tau)$ is just $\theta^3(\tau)$ (with $\theta$ as in~\eqref{eq:define_theta}), and for all $n\ge0$, we have $g_n(\tau)=\theta^3(\tau)Q_n(J(\tau))$, with $Q_n$ a polynomial of degree $n$, and 
    \[J(\tau):=\frac{16}{\lambda(\tau)(1-\lambda(\tau))} 
    = q^{-1/2}+24+276q^{1/2}+2048q+\cdots,\]
where $\lambda(\tau)$ is the modular lambda function and as usual
    \[q^a:=e^{2a\pi i \tau}.\] 
($J$ is a Hauptmodul for the group~$\Gamma_{\theta}$, see Figure~\ref{fig:jell}; we follow the normalization in~\cite[\S3.2.2]{BRS}.) There is no simple formula for the polynomials $Q_n$, but the generating series for $g_n(z)$ can be given in closed form:
	\begin{equation} \label{mkernel}
	K(\tau,z):=\sum_{n\ge0}g_n(z)e^{\pi i n\tau} = \theta^3(z)\frac{\theta(\tau)(1-2\lambda(\tau))J(\tau)}{J(\tau)-J(z)}\,,
	\end{equation}
valid for $\im\tau$ sufficiently large. From this we see that
	\begin{equation*}
	g_n(z)=\frac{1}{2} \int_{\tau_0-1}^{\tau_0+1} K(\tau, z) e^{-\pi i n \tau} d\tau
	\end{equation*}
for any $\tau_0$ such that $\tau\mapsto K(\tau,z)$ is analytic in $\im\tau>\im\tau_0$.

\begin{figure}[ht]
    \centering
    \begin{tikzpicture}
    \definecolor{cv0}{rgb}{0.95,0.95,0.95}
    \definecolor{cv1}{rgb}{0.90,0.90,0.90}
    \clip(-9,-0.5) rectangle (7,3.5);
    \begin{scope}[scale=0.5,xshift=-9cm]
    \draw[lightgray] (-5,0) -- (5,0);
    \fill[color=cv0] (-3,0) arc (180:90:3) -- (0,6.5) -- (-3,6.5);
    \fill[color=cv1] (0,3) arc (90:0:3) -- (3,6.5) -- (0,6.5);
    \draw[green] (-3,0)  --  (-3,6.5);
    \draw[green] (3,0)  --  (3,6.5);
    \draw[red] (-3,0) arc  (180:0:3);
    \draw[lightgray,dashed] (0,3)  --  (0,6.5);
    \draw (0.5,4.5) node[above]{$\bF$};

    \fill[black] (0,3) circle (0.07) node[below] {$i$};
    \fill[black] (-3,0) circle (0.07) node[below] {$-1$};
    \fill[black] (3,0) circle (0.07) node[below] {$1$};

    \draw (-4,4.5) node[above]{$\HH$};
    \end{scope}

    \draw [->] (-1.5,1) -- (0,1);
    \draw (-0.7,1) node[above] {$J$};

    \begin{scope}[scale=0.5,xshift=7cm]
    \fill[color=cv0] (-4,2.5) -- (4,2.5) -- (4,6.5) -- (-4,6.5);
    \fill[color=cv1] (-4,2.5) -- (4,2.5) -- (4,-4.5) -- (-4,-4.5);

    \draw[green] (-4,2.5)  --  (-1,2.5);
    \draw[red] (-1,2.5)  --  (1,2.5);
    \draw[lightgray,dashed] (1,2.5)  --  (4,2.5);
    \fill[black] (-1,2.5) circle (0.07) node[above] {$0$};
    \fill[black] (1,2.5) circle (0.07) node[above] {$64$};

    \draw[blue,dashed] (-1,2.5)  --  (-1,1.5);
    \draw[blue,dashed] (-1,1.5)  --  (1,1.5);
    \draw[blue,dashed] (1,1.5)  --  (1,2.5);
    \draw (0,0.6) node[above]{{\small $\ell$}};
    \draw (-3,4.5) node[above]{$\mathbb{C}$};
    \end{scope}
    \end{tikzpicture}
    \caption{Fundamental domain $\bF$, $J(z)$ as a conformal map, and a deformed integration path $\ell$.}
    \label{fig:jell}
\end{figure}
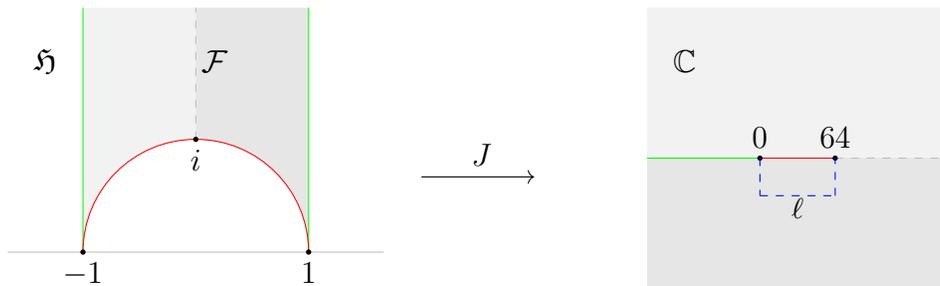

Let us also recall \cite[Eq. (3,4), (3.5), (3.6)]{BRS}: 
the Fourier expansions at 1
    \begin{equation} \label{eq:asymptexp}
    \begin{split}
    \tfrac{1}{2}(\tfrac{\tau}{i})^{-1/2}\theta(1-\tfrac{1}{\tau})
    &= q^{1/8}+q^{9/8}+q^{25/8}+\dots,\\
    -2^{-12}J(1-\tfrac{1}{\tau})   &= q+24q^2+300q^3+\dots,\\
    8\big(1-2\lambda(1-\tfrac{1}{\tau})\big) &=
    q^{-1/2}+20q^{1/2}-62q^{3/2}+\dots,
   	\end{split}
    \end{equation}
the differential identity
    \begin{equation*} \label{eq:jderiv}
    J'(\tau) = {-\pi i}\,\theta^4(\tau)J(\tau)(1-2\lambda(\tau)),
    \end{equation*}
and its corollary
	\begin{equation} \label{eq:theta4dz}
	\theta^4(\tau)d\tau = \pi^{-1}w^{-1/2}(64-w)^{-1/2}dw\,,\qquad w=J(\tau).
	\end{equation}

\subsection{Definition of \texorpdfstring{$f_n(x)$}{f\_n(x)}}
Following~\cite[Eq. (22)]{RV}, we define the function $b_n^{+}$ as
	\[b_n^{+}(x) = \frac{1}{2}\int_{-1}^{1}g_n(z)e^{\pi i z x^2} dz\,,\]
where the integral is taken over a semicircle in the upper half-plane.
In~\cite{RV} it is shown that $b_n^{+}$ is the unique even Schwartz function such that $\widehat{b_n^{+}}=b_n^{+}$, and $b_{n}^{+}(\sqrt{m})=\delta_{m,n}$, $m,n\in\Z_{\ge0}$.
We set $f_n(x)=b_{n}^{+}(\sqrt{x})$, so that
	\begin{equation} \label{eq:fndef1}
	f_n(x) = \frac{1}{2}\int_{-1}^{1}g_n(z)e^{\pi i z x} dz\,,\qquad x\ge0\,.
	\end{equation}
From~\eqref{eq:fndef1} we see that $f_n(w)$ is an entire function of exponential type $\pi$. Changing the contour of integration (see~\cite[p.~79]{RV}), we obtain the following Laplace transform representation of $f_n$:
	\begin{equation} \label{eq:fndef2}
	f_n(x) = \sin(\pi x) \int_{0}^{\infty}g_n(1+it)e^{-\pi x t} dt\,,\qquad x>n\,.
	\end{equation}
We will make use of it in Section~\ref{sec:approx-cn}.

For every $x\ge0$, the sequence $\{f_n(x)\}_{n\ge0}$ is uniquely characterized among all polynomially growing sequences of complex numbers by the functional equation
    \begin{equation} \label{eq:Ffeq}
    F(\tau,x)+(\tau/i)^{-1/2}F(-1/\tau,x) = e^{\pi i x\tau}+(\tau/i)^{-1/2}e^{-\pi i x\tau^{-1}},
    \end{equation}
where
	\begin{equation} \label{eq:gfseries}
    F(\tau,x) := \sum_{n=0}^{\infty} f_n(x) e^{\pi i n \tau}
    \end{equation}
is the generating function of $f_n(x)$.
From the generating function for $g_n(\tau)$ we also get the integral representation
	\begin{equation} \label{eq:fngf}
	F(\tau,z) = \frac{1}{2} \int_{-1}^1 K(\tau,w) e^{i \pi w z} dw\,,
	\end{equation}
which holds for $\tau$ in the fundamental domain $\mathcal{F}$.

\subsection{Kloosterman sums}
We mainly follow the notation for generalized Kloosterman sums from \cite[Ch.~4]{IwaniecBookTopicsAF}. Recall that $\Gamma_{\theta}$ has two cusps: $\infty$ and $1$. The stabilizer of the cusp $\infty$ is $\Gamma_\infty=\{\pm (\smat 1{2t}01): t\in \Z\}$ and the width of $\infty$ is $h=2$. The stabilizer of the cusp $1$ is $\Gamma_1=\{\pm (\smat {1-t}{t}{-t}{1+t}): t\in \Z\}$, and we take the scaling matrix $\sigma_1=\big(\smat{1/\sqrt 2}{\ -\sqrt 2}{1/\sqrt 2}0\big)$  in $\GL_2(\R)$ satisfying $\sigma_1\infty=1$ and $\sigma_1^{-1}\Gamma_1\sigma_1=\Gamma_\infty$. 

The cusp parameters are $\alpha_\infty=0$ and $\alpha_1=3/8$, since
\[\nu_\theta^3(T^2)=1\text{ \ and \ }\nu_{\theta}^3(\sigma_1(\smat 1201) \sigma_1^{-1})=\nu_\theta^3(\smat 01{-1}2)=\big( i\,\nu_\theta(\smat 0{-1}1{-2})\big)^3=\ee(\tfrac 38).\]
We have the double coset decomposition
\begin{align}\label{eq:doubleCosetDecomp_sigma1}
		\begin{split}
			&\Gamma_\theta\sigma_1 =\left\{\pmat{\frac{a+b}{\sqrt 2}}{-\sqrt 2\,a}{\frac{c+d}{\sqrt 2}}{-\sqrt 2\,c}:(\smat abcd )\in \Gamma_\theta\right\}\\
			&=\bigcup_{\text{odd }C>0}\Gamma_\infty \Big\{\pmat{A/\sqrt 2}{\sqrt2\,B}{C/\sqrt 2}{\sqrt 2\,D}: \begin{array}{l}
				A\Mod{2C}^*,\ D\Mod C^*\\
				AD\equiv 1\Mod C,\ A\geq 0,\ D>0,
			\end{array} 
			\Big\} \Gamma_\infty.
		\end{split}
	\end{align}
Now we define our Kloosterman sums:
\begin{equation}
    S(m,n,c)\vcentcolon =S_{\infty\infty}(m,n,c,\nu_\theta^3)=\sum_{\gamma=(\smat a*cd)\in \Gamma_\infty\setminus \Gamma_\theta/\Gamma_\infty} \nu_\theta(\gamma)^{-3} \ee\left(\frac{ma+nd}{2c}\right).
\end{equation}
Explicitly, we have
\begin{equation}
    S(m,n,c)=\left\{\begin{array}{ll}
        \displaystyle  \sum_{\substack{a,d\Mod {2c}\\ad\equiv 1\Mod {2c}}} \eps_d^{-1}(\tfrac{2c}d) \ee\left(\frac{ma+nd}{2c}\right),&\ c\equiv 0\Mod 2,  \\
        \displaystyle \sum_{\substack{\text{even }a,d\Mod{2c}\\ad\equiv 1\Mod c}}\hspace{-10px}\ee(\tfrac 38)\eps_c(\tfrac{2d}c)\ee\left(\frac{ma+nd}{2c}\right),&\ c\equiv 1\Mod 2.  
    \end{array}\right. 
\end{equation}
For odd $C>0$, $\gamma=\pmat{A/\sqrt 2}{\sqrt2\,B}{C/\sqrt 2}{\sqrt 2\,D}$ in \eqref{eq:doubleCosetDecomp_sigma1}, we define
\begin{align}
    \begin{split}
        \tilde{S}(m,n,C)&\vcentcolon=S_{\infty 1}\left(m,n,\frac{C}{\sqrt 2},\nu_\theta^3\right)  =  \hspace{-10px}\sum_{\gamma\in \Gamma_\infty\setminus \Gamma_\theta\sigma_1/\Gamma_\infty}   \hspace{-10px}(\nu_\theta)_{\infty 1}^{-3}(\gamma) \,\ee\left(\frac{m\frac{A}{\sqrt 2}+n_+\sqrt 2\,D}{2C/\sqrt 2}\right)\\
        &=\sum_{\substack{D\Mod C^*\\\text{odd }A\Mod{2C}^*\\AD\equiv 1\Mod C}} \nu_\theta\pmat**{-D}{C+D}^{-3}\,\ee\left(\frac{mA+2n_+D}{2C}\right).
        \end{split}
\end{align}
In the above formula, $n_{+}=n+3/8$. The Kloosterman sums $S$ and $\tilde{S}$ satisfy some nontrivial relations. By \cite[Prop.~2.2]{RadchenkoSun}, for odd $c>0$,
\begin{equation*}
    S(m,4n,2c)=\left\{\begin{array}{ll}
        -\sqrt2\, S(m,n,c), &\ m\equiv 0,3\Mod 4,  \\
         \sqrt2\, S(m,n,c), &\ m\equiv 1,2\Mod 4. 
    \end{array}\right.
\end{equation*}
Similarly, for odd $c>0$, 
\begin{equation}\label{eq:RelationKLsumsTilde}
    \ee(-\tfrac 38)S(m,8n+3,2c)=\left\{\begin{array}{ll}
        -\sqrt2\, \tilde{S}(m,n,c), &\ m\equiv 0,3\Mod 4,  \\
         \sqrt2\, \tilde{S}(m,n,c), &\ m\equiv 1,2\Mod 4. 
    \end{array}\right.
\end{equation}
We will use the latter formula to obtain estimates on sums of Kloosterman sums $\tilde S$ in \S~\ref{sec:rademacher}.

\section{Rademacher-type formulas for the coefficients of \texorpdfstring{$g_m(\tau)$}{g\_m(tau)}}
\label{sec:rademacher}
Let $a_{m,n}$ and $\tilde{a}_{m,n}$ be the coefficients of the Fourier expansions of $g_m$ at $\infty$ and at $1$ respectively:
	\begin{align}
	g_m(\tau)-e^{-m\pi i \tau} &= \sum_{n\ge 1} a_{m,n}e^{n\pi i \tau}\,,\\
	(\tau/i)^{-3/2}g_m\Big(1-\frac1{\tau}\Big) &= \sum_{n\ge 0} \tilde{a}_{m,n}e^{2(n+3/8)\pi i \tau}\,. \label{eq:gm1expansion}
	\end{align}
The next result gives Rademacher-type formulas for these coefficients in terms of the Kloosterman sums from Section~\ref{sec:definitiions}. 

\begin{lemma} \label{prop:rademacher}
	Let $m$ be a positive integer. We have
	\begin{align}
	\label{eq:rademacher8}
    a_{m,n}&=\pi \ee(-\tfrac 38)\left(\frac nm\right)^{\frac 14}\sum_{c=1}^\infty \frac{S(-m,n,c)}{c}I_{1/2}\Big(\frac{2\pi\sqrt{m n}}{c}\Big),\qquad &&n\geq 1\,,\\
	\label{eq:rademacher1}
    \tilde{a}_{m,n}&=2\pi \left(\frac {2(n+3/8)}m\right)^{\frac 14}\sum_{\mathrm{odd}\;c>0} \frac{\tilde{S}(-m,n,c)}{c}I_{1/2}\Big(\frac{2\pi\sqrt{2m (n+3/8)}}{c}\Big),\quad &&n\geq 0. 
	\end{align}
\end{lemma}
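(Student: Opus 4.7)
Both formulas will be obtained by the Rademacher-Zuckerman circle method adapted to $\Gamma_\theta$ and the half-integral weight multiplier $\nu_\theta^3$. Since $g_m$ has the single principal-part term $e^{-m\pi i\tau}$ at the cusp $\infty$ and vanishes at the cusp $1$, only this principal part will produce a nontrivial Rademacher series, and the two formulas differ only by the cusp at which the Fourier coefficients are being read off.

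For~\eqref{eq:rademacher8} I would start from the contour integral
\[ a_{m,n} = \tfrac{1}{2}\int_{0}^{2}\bigl(g_m(x+iY)-e^{-m\pi i(x+iY)}\bigr)\,e^{-n\pi i(x+iY)}\,dx,\qquad Y>0, \]
and decompose $[0,2]$ into Farey arcs around rationals $a/c$ of order $N$, with $Y=1/N^2$, separating those $a/c$ that are $\Gamma_\theta$-equivalent to $\infty$ from those equivalent to $1$. Only the first class contributes in the limit $Y\to 0$, since $g_m$ has no principal part at $1$. On each arc around a rational $a/c\sim_{\Gamma_\theta}\infty$, choose $\gamma=(\smat abcd)\in\Gamma_\theta$, substitute $\tau=\gamma\tau'$, and use $g_m(\gamma\tau')=\nu_\theta^3(\gamma)(c\tau'+d)^{3/2}g_m(\tau')$ to isolate the contribution of the principal part $e^{-m\pi i\tau'}$. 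Summing the resulting phases over residues $a\pmod{2c}$ inside each Farey class collects the multiplier values into the Kloosterman sum $S(-m,n,c)$, while the $\tau'$-integral, after extension to $\R$ and an affine change of variables, reduces to a standard integral of the shape
\[ \int_{\R} t^{-3/2}\,e^{-\pi i(\alpha t+\beta/t)}\,dt \;\propto\; (\beta/\alpha)^{1/4}\,I_{1/2}\bigl(2\pi\sqrt{\alpha\beta}\bigr), \]
taken with $\alpha=n$ and $\beta=m/c^2$. Tracking the constants then yields~\eqref{eq:rademacher8}.

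For~\eqref{eq:rademacher1} one runs the identical argument centered at the cusp $1$, using the scaling matrix $\sigma_1$ and the double coset decomposition~\eqref{eq:doubleCosetDecomp_sigma1}; the cusp parameter $\alpha_1=3/8$ shifts $n\mapsto n+3/8$ everywhere, restricts $c$ to odd positive integers, and the multiplier $(\nu_\theta)_{\infty 1}^3$ assembles into $\tilde S(-m,n,c)$, producing the same Bessel integral but now with $\alpha=2(n+3/8)$ and $\beta=2m/c^2$. Absolute convergence of both series, and the validity of the limit $Y\to 0$ together with the interchange of summation and integration, follows from the Weil-Iwaniec bound $|S(-m,n,c)|,|\tilde S(-m,n,c)|\ll c^{1/2+\eps}$ combined with $I_{1/2}(x)\ll x^{1/2}$ near $0$. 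I expect the main obstacle to be the bookkeeping of $\nu_\theta^3$ under the action of $\gamma$: the $c$ even vs.\ odd split in the definition of $S$, and the sign rule $\nu_\theta(-\gamma)=i\nu_\theta(\gamma)$, must be tracked through the residue sum to obtain the precise normalization $\pi\,\ee(-3/8)$ in~\eqref{eq:rademacher8}. A secondary technical point is the routine but tedious estimate showing that the contributions of the ``side arcs'' near $0$ and $2$ of the Farey dissection vanish as $Y\to 0$.
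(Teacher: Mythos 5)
Your route (classical Rademacher--Zuckerman circle method with a Farey dissection) is genuinely different from the paper's, which instead builds a weak Maass--Poincar\'e series $P_m(\tau,s)$ with Whittaker-function seed, computes its Fourier expansion at both cusps in terms of $S(-m,n,c)$, $\tilde S(-m,n,c)$ and Bessel factors $I_{2s-1}$, $J_{2s-1}$, and then specializes to $s=3/4$ by analytic continuation (Hecke's trick), identifying $P_m(\cdot,3/4)$ with $g_m$ by uniqueness of the weakly holomorphic form with prescribed principal part. In principle a circle-method proof could be made to work, but your proposal has a concrete gap exactly at the point you dismiss as routine: convergence. Since $I_{1/2}(x)\asymp x^{1/2}$ as $x\to0$, the $c$-th term of \eqref{eq:rademacher8} is of size $|S(-m,n,c)|\,c^{-3/2}$, and with the Weil/Sali\'e bound $|S(-m,n,c)|\ll c^{1/2+\eps}$ this is only $O(c^{-1+\eps})$; the series is \emph{not} absolutely convergent, so the claim that ``absolute convergence \dots follows from the Weil--Iwaniec bound'' is false. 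Weight $3/2$ is precisely the borderline case: one must exhibit cancellation in the partial sums $\sum_{c\le x}S(-m,n,c)/c$ (the paper imports bounds of the shape $\ll_\eps(|mn|^{1/4}+x^{1/6})|mnx|^{\eps}$ from \cite{RadchenkoSun}, cf.\ \eqref{eq:kloostermanpartialsums} and \eqref{eq:kloostermanpartialsumstilde}) and then sum by parts to get even conditional convergence. This is also why the naive limit $Y\to0$ in the Farey dissection is delicate here: for weight in $[1,2)$ the minor-arc/error contributions do not vanish by trivial estimates, which is the standard reason the literature (and this paper) treats such weights via Poincar\'e series at a complex spectral parameter with continuation to $s=k/2$, rather than by the direct circle method that works cleanly only for negative weight.

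Two smaller points. First, your bookkeeping worry about $\nu_\theta^3$ is legitimate but is not the main obstacle; the main obstacle is the convergence issue above, which no amount of multiplier bookkeeping removes. Second, in the cusp-$1$ formula you should be careful that the absence of a principal part at $1$ (cusp parameter $\alpha_1=3/8>0$) only guarantees that those Farey arcs contribute error terms; quantifying that contribution again runs into the same borderline-weight estimates, and for \eqref{eq:rademacher1} the needed cancellation for $\tilde S$ is obtained in the paper through the relation \eqref{eq:RelationKLsumsTilde} linking $\tilde S$ to $S$, an ingredient your sketch would also need.
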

\begin{proof}
    The technique is standard and essentially repeats the proof of Thm~3.2 and Prop.~5.1 in~\cite{JKK} (see also~\cite{DIT,BJO,Koh}), so we will only give a brief outline. Let the weight be $k=\frac 32$ and 
    \[\varphi_{m}(\tau,s)\coloneqq |4\pi m y|^{-k/2}M_{\mathrm{sgn}(m)k/2,\,s-1/2}(4\pi |m|y)\ee(mx)\]
    as in \cite[(2.15)]{JKK} ($M$ is the Whittaker $M$ function~\cite[Eq.~(3.14.2)]{DLMF}), which is an eigenfunction for the weight $3/2$ hyperbolic Laplace operator $\Delta_{3/2}$ with eigenvalue $(s-3/4)(1/4-s)$ and $\phi_{-m/2}(\tau,3/4)=e^{-m\pi i \tau}$. Consider the weak Maass--Poincar\'e series
	\begin{equation}
		P_{m}(\tau,s) \coloneqq  \sum_{\gamma\in\Gamma_{\infty}\backslash\Gamma_{\theta}}\nu_\theta(\gamma)^{-3}j(\gamma,\tau)^{-3/2}\varphi_{-m/2}(\gamma \tau,s), 
	\end{equation}
    which converges absolutely and uniformly on compact subsets of $\re s>1$. We will calculate its analytic continuation to $s= k/2= 3/4$ via its Fourier expansion. We have
    \begin{equation}\label{eq:PmTauS_FourierExpansion}
        P_m(\tau,s)=|2\pi m y|^{-3/4}M_{-3/4,\,s-1/2}(2\pi my)\ee(-mx/2)+\sum_{n\in \Z} a_m(n,y,s)\ee(nx/2), 
    \end{equation}
    where $a_m(n,y,s)$ is given by 
    \begin{equation}\label{eq:FourierExpansionInfty_s}
        \frac{\Gamma(2s)\ee(-\tfrac 38)}{(2\pi m y)^{\frac 34}}\sum_{c=1}^\infty \frac{S(-m,n,c)}{c}\left\{\begin{array}{ll}
					\frac{\pi\sqrt{|m/n|}}{\Gamma(s+\frac 34)}W_{\frac 34,s-\frac 12}(2\pi ny) I_{2s-1}\big(\frac{2\pi\sqrt{mn}}c\big), & n>0,\\
					\frac{2\pi^{1+s}}{(2s-1)\Gamma(s-\frac 34)\Gamma(s+\frac  34)}\frac{m^s}{2^sc^{2s}y^{s-1}}, & n=0,\\
					\frac{\pi\sqrt{|m/n|}}{\Gamma(s-\frac 34)}W_{-\frac 34,s-\frac 12}(2\pi |n|y) J_{2s-1}\big(\frac{2\pi\sqrt{|mn|}}c\big),& n<0.
				\end{array}\right.
    \end{equation}
    Here $W$ is the Whittaker $W$ function~\cite[Eq.~(3.14.3)]{DLMF}.

    Using \cite[Prop.~7.1, Prop.~7.3, \& Prop.~7.8]{RadchenkoSun}, and noting that the above sum over $c$ in $\Z_{>0}$ decomposes into the sum of the two cases $2|c$ and $2\nmid \widetilde{c}$ from \cite[\S7]{RadchenkoSun}, we find that
    \begin{equation} \label{eq:kloostermanpartialsums}
        \sum_{1\leq c\leq x}\frac{S(-m,n,c)}c \ll_\eps  \left\{\begin{array}{ll}
            (|mn|^{1/4}+x^{1/6})|mnx|^\eps, &\ n\neq 0, \\
            m^{1/4+\eps}x^{1/6+\eps}, & \ n=0.
        \end{array}\right.
    \end{equation}
    By simple estimates for Bessel functions, we find that the series in  \eqref{eq:FourierExpansionInfty_s} is conditionally convergent for $\re s\ge 3/4$ (for details, see the argument in \cite[\S9.2]{SunUniform1}).

    The conditional convergence and estimates on $a_m(n,y,s)$ ensure that \eqref{eq:PmTauS_FourierExpansion} is absolutely convergent for $\re s\ge 3/4$. By analytic continuation, we can take $s=3/4$ in \eqref{eq:PmTauS_FourierExpansion}. Since $a_m(n,y,3/4)=0$ for $n\leq 0$, $P_m(\tau,3/4)$ is weakly holomorphic and equals $g_m(\tau)$ by uniqueness. This proves \eqref{eq:rademacher8}.
    
    The proof of the other formula follows similarly. For $\re s >1$, we have
    \begin{equation}
			(P_m(\cdot,s)|_{\frac 32}\sigma_1)(\tau)=\sum_{\gamma\in (\Gamma_\theta)_\infty\setminus \Gamma_\theta\sigma_1}(\nu_{\theta})_{\infty 1}^{-3}(\gamma) j(\gamma,\tau)^{-\frac 32} \phi_{- m/2}(\gamma \tau,s),
		\end{equation}
     which has Fourier expansion
    \begin{equation}\label{eq:PmTauS_FourierExpansion_cusp1_s}
		\Big(\frac{\tau}{\sqrt 2}\Big)^{-\frac 32}P_{m}\Big(1-\frac 2\tau ,s\Big)=\sum_{n\in \Z} \tilde{a}_m(n,y,s)e^{\pi i n_+ x},
	\end{equation}
    where $\tilde{a}_m(n,y,s)$ is given by $\Gamma(2s)(2\pi m y)^{-\frac 34}\ee(-\tfrac 38)$ times
    \begin{equation}\label{eq:FourierExpansion1_s}
        \sum_{\text{odd }c>0} \frac{\tilde{S}(-m,n,c)}{c/\sqrt 2}\left\{\begin{array}{ll}
				\frac{\pi|m/n_+|^{\frac 12}}{\Gamma(s+\frac 34)}W_{\frac 34,s-\frac 12}(2\pi n_+ y) I_{2s-1}\big(\frac{2\pi\sqrt{m n_+}}{c/\sqrt 2}\big), & n\geq 0,\\
				\frac{\pi|m/n_+|^{\frac 12}}{\Gamma(s-\frac 34)}W_{-\frac 34,s-\frac 12}(2\pi |n_+|y) J_{2s-1}\big(\frac{2\pi\sqrt{m |n_+|}}{c/\sqrt 2}\big),& n<0.\\
			\end{array}\right.
    \end{equation}
    (Recall that $n_+=n+3/8$ and $\sigma_1=(\smat{1/\sqrt 2}{\ -\sqrt 2}{1/\sqrt 2}0)$.) Since $8n+3$ cannot be $0$ or a negative square for $n$ in $\Z$, by combining \eqref{eq:RelationKLsumsTilde} with an argument similar to that in \cite[Prop.~7.3]{RadchenkoSun}, we find that
    \begin{equation} \label{eq:kloostermanpartialsumstilde}
        \sum_{1\leq c\leq x}\frac{\tilde{S}(-m,n,c)}c \ll_\eps (|mn_+|^{1/4}+x^{1/6})|mn_+x|^\eps \quad \text{for }n\in \Z.  
    \end{equation}
    We conclude the conditional convergence of \eqref{eq:FourierExpansion1_s} for $\tilde{a}_m(n,y,s)$ for $\re s \ge 3/4$, the fact that $\tilde{a}_m(n,y,3/4)=0$ for $n<0$, the absolute convergence of \eqref{eq:PmTauS_FourierExpansion_cusp1_s} for $\re s \ge 3/4$, and \eqref{eq:rademacher1} in the same way as in the proof of~\eqref{eq:rademacher8} above. 
\end{proof}

Note that~\eqref{eq:rademacher1} and~\eqref{eq:rademacher8} can be rewritten as
    \[\tilde{a}_{m,n} = \frac{2}{\sqrt{m}}\sum_{\mathrm{odd}\ c>0}\frac{\tilde{S}(-m,n,c)}{\sqrt{c}}\sinh\Big(\frac{2\pi\sqrt{2m(n+3/8)}}{c}\Big)\]
and
    \[a_{m,n} = \frac{\ee(-\tfrac{3}{8})}{\sqrt{m}}\sum_{c=1}^{\infty}
    \frac{S(-m,n,c)}{\sqrt{c}}\sinh\Big(\frac{2\pi\sqrt{mn}}{c}\Big).\]
From these identities, together with~\eqref{eq:kloostermanpartialsums} and~\eqref{eq:kloostermanpartialsumstilde}, we get the following corollary (the big $O$ term is not optimal, but will suffice for our purposes).
\begin{lemma}
For any $k,m\ge1$, we have
    \begin{align*}
    \tilde a_{m,n} = \frac{2}{\sqrt{m}}
    \sum_{\substack{1\le c<2k+1\\ c\ \mathrm{ odd}}}
    \frac{\tilde{S}(-m,n,c)}{\sqrt{c}}\sinh\Big(\frac{2\pi\sqrt{2mn_+}}{c}\Big)
     + O\bigg(m^{3/2}n_+^2\exp\Big(\frac{2\pi\sqrt{2mn_+}}{2k+1}\Big)\bigg)
    \end{align*}
and if also $n\ge1$, then
    \begin{align*}
    a_{m,n} = \frac{\ee(-\tfrac{3}{8})}{\sqrt{m}}
    \sum_{1 \le c < k+1}
    \frac{S(m,n,c)}{\sqrt{c}}\sinh\Big(\frac{2\pi\sqrt{mn}}{c}\Big)
     + O\bigg(m^{3/2}n^2\exp\Big(\frac{2\pi\sqrt{mn}}{k+1}\Big)\bigg).
    \end{align*}
\end{lemma}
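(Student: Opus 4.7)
The two identities in the statement are reformulations of Lemma~\ref{prop:rademacher} via $I_{1/2}(x) = \sqrt{2/(\pi x)}\sinh x$, so what needs to be proved is purely the tail bound. I focus on the tilde case; the argument for $a_{m,n}$ is entirely parallel. Write $A \coloneqq 2\pi\sqrt{2mn_+}$, $C_0 \coloneqq 2k+1$, and
    \[g(c) \coloneqq \sqrt{c}\,\sinh(A/c),\qquad P(x) \coloneqq \sum_{\substack{c\le x \\ c\ \mathrm{odd}}} \frac{\tilde S(-m,n,c)}{c}\,,\]
so that the tail to be estimated equals $\sum_{c\ge C_0,\,c\,\mathrm{odd}} (\tilde S(-m,n,c)/c)\,g(c)$. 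The tool is Abel summation combined with the partial sum bound $|P(x)|\ll_\eps (mn_+x)^\eps\bigl((mn_+)^{1/4}+x^{1/6}\bigr)$ from~\eqref{eq:kloostermanpartialsumstilde}.

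Setting $y = A/c$ one computes $g'(c) = \frac{1}{2\sqrt c}(\sinh y - 2y\cosh y)$. The function $h(y) = \sinh y - 2y\cosh y$ satisfies $h(0)=0$ and $h'(y) = -\cosh y - 2y\sinh y < 0$ for $y>0$, so $g$ is strictly decreasing on $(0,\infty)$ with $g(c)\to 0$, and one has the crude bound $|g'(c)| \le (A+c)e^{A/c}/c^{3/2}$. Abel summation in Stieltjes form then gives
    \[\biggl|\sum_{\substack{c\ge C_0\\ c\,\mathrm{odd}}} \frac{\tilde S(-m,n,c)}{c}\,g(c)\biggr| \ll |P(C_0-1)|\,g(C_0) + \int_{C_0}^{\infty} |P(x)|\,|g'(x)|\,dx,\]
with the contribution at $+\infty$ vanishing because $g$ tends to zero while $P$ grows only polynomially.

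The integral is handled with the substitution $u = A/x$, which reduces $\int_{C_0}^{\infty} e^{A/x}/x^s\,dx$ to a Laplace-type integral whose leading contribution for $C_0 \le A$ is $\ll A^{1-s}C_0^{s-1/2}e^{A/C_0}$. Plugging in the two pieces of the Kloosterman bound and multiplying by the overall factor $2/\sqrt m$, one arrives at a bound of order $m^{-1/2+\eps}\bigl((mn_+)^{1/4}\sqrt{C_0}+C_0^{2/3}\bigr)e^{A/C_0}$. In the relevant range $C_0 \le A$ this simplifies to $n_+^{1/2+\eps}m^{\eps}e^{A/C_0}$; for $C_0 > A$ the exponential factor is at most $e$ and an even smaller estimate holds. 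Either way the bound is comfortably dwarfed by the crude $O(m^{3/2}n_+^2 e^{A/C_0})$ asserted in the lemma. The argument for $a_{m,n}$ is identical after replacing $A$ by $2\pi\sqrt{mn}$, $C_0$ by $k+1$, and~\eqref{eq:kloostermanpartialsumstilde} by~\eqref{eq:kloostermanpartialsums}. The only real obstacle is bookkeeping, namely the split into the two regimes $c \le A$ and $c > A$ in the Laplace integral together with the two homogeneous pieces of the Kloosterman partial sum bound; no sharpness is needed because the asserted $O$-term is deliberately very loose.
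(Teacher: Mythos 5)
Your overall route is the same as the paper's (implicit) one: rewrite the Rademacher formulas of Lemma~\ref{prop:rademacher} via $I_{1/2}(x)=\sqrt{2/(\pi x)}\sinh x$, truncate at $c<2k+1$ (resp. $c<k+1$), and bound the tail by Abel summation against the Kloosterman partial-sum estimates \eqref{eq:kloostermanpartialsums} and \eqref{eq:kloostermanpartialsumstilde}; the paper leaves exactly these details to the reader, so you are filling in the intended argument. However, one step fails as written. Your ``crude bound'' $|g'(c)|\le (A+c)e^{A/c}/c^{3/2}$ is too weak for the Abel-summation integral: the partial sums satisfy only $|P(x)|\ll_{\eps}(mn_+x)^{\eps}\bigl((mn_+)^{1/4}+x^{1/6}\bigr)$, which \emph{grows} in $x$, so the contribution of the ``$+c$'' part of your bound produces integrands of size $e^{A/x}x^{-1/2+\eps}$ and $e^{A/x}x^{-1/3+\eps}$, whose integrals over $[C_0,\infty)$ diverge. (Equivalently, your Laplace-integral formula for $\int_{C_0}^{\infty}e^{A/x}x^{-s}\,dx$ is only meaningful for $s>1$, while the crude bound forces you into $s\le1$.) Note also that plain Dirichlet-test reasoning does not rescue this, since $\sup_x|P(x)|=\infty$; the decay of $g'$ really has to be used.

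The gap is easily repaired: since $\tanh y\le y$ one has $|\sinh y-2y\cosh y|\le 3y\cosh y\le 3ye^{y}$, hence $|g'(c)|\ll A\,e^{A/c}c^{-3/2}$. With this, every piece of the integrand has exponent $s\in\{3/2-\eps,\,4/3-\eps\}>1$, the integral converges and is of the same order as your boundary term $|P(C_0-1)|\,g(C_0)$, and your stated final bound $\ll m^{-1/2+\eps}\bigl((mn_+)^{1/4}\sqrt{C_0}+C_0^{2/3}\bigr)e^{A/C_0}\ll n_+^{1/2+\eps}m^{\eps}e^{A/C_0}$ follows, which is indeed far below the asserted $O\bigl(m^{3/2}n_+^{2}e^{A/C_0}\bigr)$; the case $C_0>A$ and the $a_{m,n}$ case are as you say. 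A further minor inaccuracy: the sharp leading behavior of $\int_{C_0}^{\infty}e^{A/x}x^{-s}\,dx$ for $1<s\le2$ and $C_0\le A$ is $\ll A^{-1}C_0^{2-s}e^{A/C_0}$, not $A^{1-s}C_0^{s-1/2}e^{A/C_0}$; your expression happens to be larger in this range, so it is harmless as an upper bound, but it is not what the substitution $u=A/x$ actually yields.
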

We will only use the case $k=1$, from which we conclude that for $m\ge1$
    \begin{equation} \label{eq:atildeapprox}
    \tilde a_{m,n} = \frac{(-1)^n}{\sqrt{m}}e^{2\pi\sqrt{2m(n+3/8)}}
     + O\bigg(m^{3/2}(n+3/8)^2\exp\Big(\frac{2\pi\sqrt{2m(n+3/8)}}{3}\Big)\bigg)
    \end{equation}
and for $m,n\ge1$
    \begin{equation} \label{eq:aapprox}
    a_{m,n} = \frac{e^{2\pi\sqrt{mn}}}{2\sqrt{m}} 
            + O\bigg(m^{3/2}n^2e^{\pi\sqrt{mn}}\bigg).
    \end{equation}

\section{Approximate formula for \texorpdfstring{$f_n$}{f\_n} and the special function \texorpdfstring{$\Phi(z)$}{Phi(z)}}
\label{sec:Phi}

In this section, we will use~\eqref{eq:atildeapprox} to obtain a simple and precise approximation of $f_n(x)$ for $x>n/9$. This essentially reduces the study of $f_n$ in this range to the study of a new special function $\Phi(z)$, defined in~\eqref{eq:phidef} below. We will show that $\Phi(z)$ extends to a meromorphic function in $\C$ with a single simple pole at $z=0$, and then we will give estimates for the values $\Phi(x)$ for negative real $x$ and show that $\Phi$ has infinitely many negative real zeros.

\subsection{Approximate formula for \texorpdfstring{$f_n(x)$}{f\_n(x)} for \texorpdfstring{$x > n/9$}{x>n/9}}
\label{sec:approx-cn}

We start by showing how~\eqref{eq:atildeapprox} leads to an approximate formula for $f_n(x)$ valid for $x>n/9$. The idea is to start with~\eqref{eq:fndef2}, which we rewrite in the form
    \[\frac{f_n(x)}{\sin(\pi x)} = 
    \int_{0}^{\infty}g_n(1+it)e^{-\pi x t} dt\,,\qquad x>n\,,\]
and then use the Fourier expansion of $(\tau/i)^{-3/2}g_{n}(1-1/\tau)$. From
	\[\int_{0}^{\infty}e^{-2\pi \alpha t^{-1}-\pi x t}t^{-3/2}dt
	= \frac{e^{-2\pi\sqrt{2\alpha x}}}{\sqrt{2\alpha}}\,,\qquad \re\alpha,\, \re x>0,\]
together with~\eqref{eq:gm1expansion}, we obtain the identity
	\[\frac{f_n(x)}{\sin(\pi x)} =
	\sum_{\nu\ge0}\tilde a_{n,\nu}
	\frac{e^{-2\pi \sqrt{2x(\nu+3/8)}}}{\sqrt{2(\nu+3/8)}}\,,\qquad x>n.\]
By~\eqref{eq:atildeapprox},
    \begin{equation} \label{eq:fnapprox}
    \frac{f_n(x)}{\sin \pi(x-n)} = \sum_{\nu\ge0}\frac{e^{2\pi\sqrt{2(\nu+3/8)}(\sqrt{n}-\sqrt{x})}}{\sqrt{2n(\nu+3/8)}}
	+E_n(x)\,,\qquad x > n\,,
    \end{equation}
where
	\[E_n(x):=\sum_{\nu \ge0 }\Big(\tilde a_{n,\nu} - \frac{(-1)^{n}}{\sqrt{n}}e^{2\pi\sqrt{2n(\nu+3/8)}}\Big)\frac{e^{-2\pi \sqrt{2x(\nu+3/8)}}}{\sqrt{2(\nu+3/8)}}\,.\]
Note that from the error term in~\eqref{eq:atildeapprox} it follows that $E_n(x)$ is well-defined and analytic for $\re \sqrt{x} >\sqrt{n}/3$, and in this region it satisfies
    \[E_n(x)=O\Big(\frac{e^{-\pi\sqrt{3}\re(\sqrt{x}-\sqrt{n}/3)}}{|\sqrt{x}-\sqrt{n}/3|^5}\Big)\,.\]
We define
    \begin{equation}\label{eq:phidef}
    \Phi(z) := \sum_{\nu=0}^{\infty}
    \frac{e^{-2\pi z\sqrt{2(\nu+3/8)}}}{\sqrt{2(\nu+3/8)}}\,,\qquad \re z>0.
    \end{equation}
In Section~\ref{sec:phiac} below, we will show that $\Phi(z)$ continues analytically to a meromorphic function in $\C$ with a simple pole at $z=0$ and no other singularities. Then~\eqref{eq:fnapprox} together with uniqueness for analytic functions gives the following result.
\begin{lemma} \label{prop:fnapprox}
For $z \ne n$ with $\re z>0$ and $\re\sqrt{z}>(1/3+\eps)\sqrt{n}$, we have
	\begin{equation} \label{eq:fn9}
		\frac{f_n(z)}{\sin\pi(z-n)} = \frac{\Phi(\sqrt{z}-\sqrt{n})}{\sqrt{n}} + E_n(z)\,,
	\end{equation}
    where $E_n(z)$ is analytic in the above range and satisfies
    \[E_n(z) = O_{\eps}\big(e^{\pi\sqrt{3}(\sqrt{n}/3-\re\sqrt{z})}\big)\,.\]
\end{lemma}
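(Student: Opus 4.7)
The identity~\eqref{eq:fn9} is essentially a reformulation of the computation carried out in the paragraphs preceding the lemma, repackaged as an identity of meromorphic functions in $z$. The plan is threefold: (i) derive the formula for real $x>n$ by combining the Laplace representation with the Fourier expansion at the cusp~$1$; (ii) bound $E_n$ in the stated region; (iii) extend from the real line to the full complex region by analytic continuation.

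For step~(i), start from~\eqref{eq:fndef2}. Setting $\tau=i/t$ in~\eqref{eq:gm1expansion} gives $(\tau/i)^{-3/2}=t^{3/2}$ and $1-1/\tau=1+it$, hence the Fourier expansion at $1$ takes the form $g_n(1+it)=t^{-3/2}\sum_{\nu\ge0}\tilde a_{n,\nu}e^{-2\pi(\nu+3/8)/t}$. Inserting this series into~\eqref{eq:fndef2}, interchanging summation and integration (justified by absolute convergence for $x>n$, where the factor $e^{-\pi x t}$ and the Rademacher-type growth $\tilde a_{n,\nu}\ll e^{O(\sqrt{n\nu})}$ combine to give a convergent double integral/sum), and applying the elementary identity $\int_0^\infty t^{-3/2}e^{-2\pi\alpha/t-\pi xt}\,dt=e^{-2\pi\sqrt{2\alpha x}}/\sqrt{2\alpha}$, one obtains $f_n(x)/\sin\pi x=\sum_{\nu\ge0}\tilde a_{n,\nu}e^{-2\pi\sqrt{2x(\nu+3/8)}}/\sqrt{2(\nu+3/8)}$. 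Splitting $\tilde a_{n,\nu}$ via~\eqref{eq:atildeapprox} isolates the main term $\frac{(-1)^n}{\sqrt n}\Phi(\sqrt x-\sqrt n)$; since $\sin\pi(x-n)=(-1)^n\sin\pi x$ for integral $n$, moving this sign across the denominator gives the $\Phi$-term of~\eqref{eq:fn9}, with the tail being exactly the $E_n(x)$ defined in the preamble.

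For step~(ii), the error term in~\eqref{eq:atildeapprox} bounds the $\nu$-th summand of $E_n(z)$ by $n^{3/2}(\nu+3/8)^{3/2}e^{-2\pi(\re\sqrt z-\sqrt n/3)\sqrt{2(\nu+3/8)}}$. Under the hypothesis $\re\sqrt z>(\tfrac13+\eps)\sqrt n$, the exponential gives uniform geometric decay in $\nu$, which absorbs the polynomial factor $n^{3/2}(\nu+3/8)^{3/2}$ (this is where the $\eps$-margin is essential). The total is then dominated by the $\nu=0$ contribution, producing the claimed bound $E_n(z)\ll_\eps e^{-\pi\sqrt 3(\re\sqrt z-\sqrt n/3)}$. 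For step~(iii), both sides of~\eqref{eq:fn9} are meromorphic on the stated region: $f_n$ is entire, and the apparent poles of $1/\sin\pi(z-n)$ at integers $z=m\ne n$ are cancelled by the vanishing $f_n(m)=\delta_{m,n}$, leaving only a simple pole at $z=n$; on the right, $\Phi(\sqrt z-\sqrt n)/\sqrt n$ has a matching simple pole at $z=n$ coming from the pole of $\Phi$ at $0$ (established elsewhere in this section), and $E_n$ is analytic throughout. The identity, verified on the open set $x>n$, therefore extends to the full region by uniqueness.

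The main technical obstacle is the rigorous manipulation of the series in step~(i): the Fourier expansion at the cusp $1$ converges absolutely only in a neighborhood of that cusp, i.e.\ for $t$ sufficiently small, so the term-by-term integration over the whole half-line $(0,\infty)$ requires splitting the $t$-range and controlling the intermediate and large $t$ behavior of $g_n(1+it)$ by independent means (e.g.\ using that $g_n$ has only exponential singularities at the cusps $\infty$ and $1$, and is bounded uniformly in between for $\im\tau$ bounded away from zero). Once this is in place, the remaining steps are straightforward combinations of the approximation~\eqref{eq:atildeapprox} with the geometric series estimate described above.
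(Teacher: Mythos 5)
Your proposal is correct and is essentially the paper's own argument: the Laplace representation~\eqref{eq:fndef2} combined with the cusp-$1$ expansion~\eqref{eq:gm1expansion}, termwise integration via the elementary identity $\int_0^\infty t^{-3/2}e^{-2\pi\alpha/t-\pi xt}\,dt=e^{-2\pi\sqrt{2\alpha x}}/\sqrt{2\alpha}$, splitting off the main term with~\eqref{eq:atildeapprox} so that the tail is precisely $E_n$, a termwise bound using the $\eps$-margin, and extension from real $x>n$ to the stated complex region by uniqueness of analytic continuation once $\Phi$ is known to be meromorphic. The only remark: the ``main technical obstacle'' you raise does not arise, since $g_n$ is holomorphic on all of $\HH$ and vanishes at the cusp $1$, so the expansion~\eqref{eq:gm1expansion} converges absolutely for every $\tau\in\HH$ (i.e.\ for all $t>0$), and the Fubini--Tonelli justification you already gave (coefficient growth $e^{O(\sqrt{n\nu})}$ against the decay $e^{-2\pi\sqrt{2x(\nu+3/8)}}$) suffices without any splitting of the $t$-range.
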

To extract more information about $f_n$ from~\eqref{eq:fn9}, we turn to a more detailed study of the special function~$\Phi$. 

\subsection{Analytic continuation of \texorpdfstring{$\Phi$}{Phi}}
\label{sec:phiac}
From~\eqref{eq:phidef}, we compute the Mellin transform of $\Phi$ as
    \[\int_{0}^{\infty}\Phi(t)t^{s-1}dt 
      = (2\pi)^{-s}2^{-(s+1)/2}\Gamma(s)\zeta\Big(\frac{s+1}{2},\frac{3}{8}\Big),\]
where $\zeta(s,x)=\sum_{n\ge1}(n+x)^{-s}$ is the Hurwitz zeta function. A standard argument starting with the inverse Mellin transform followed by pushing the abscissa of the line of integration to $-\infty$ and collecting the residues, then gives the following alternative expression for $\Phi$:
    \begin{equation} \label{eq:phitaylor}
    \Phi(z) = \frac{1}{2\pi z}+\sum_{k=0}^{\infty}\frac{2^{(k-1)/2}\zeta(\frac{1-k}{2},\frac38)(-2\pi z)^k}{k!}\,.
    \end{equation}
This readily shows that $\Phi$ is meromorphic in $\C$ with a simple pole at $0$ and no other singularities. Our finer analysis of the properties of $\Phi$ is based on the following functional equation.
    \begin{lemma}\label{prop:Psi}
    For $z$ in $\C^{\times}$, we have
    \begin{equation} \label{eq:phifeq}
    \Phi(z)+\Phi(-z) = \sum_{n\ge1}\frac{2\cos(\pi(\frac{3n-1}{4}-\frac{z^2}{n}))}{\sqrt{n}}\,.\end{equation}
    \end{lemma}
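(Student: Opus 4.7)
The plan is to prove the identity by matching the Taylor expansions of both sides around $z=0$, the main tool being the classical functional equation for the Hurwitz zeta function.

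Starting from~\eqref{eq:phitaylor}, the simple poles of $\Phi(z)$ and $\Phi(-z)$ at $z=0$ cancel in the sum, and every odd-degree term drops out, so
\[
\Phi(z)+\Phi(-z)=\sqrt{2}\sum_{j\ge 0}\frac{(8\pi^2 z^2)^{j}}{(2j)!}\,\zeta\bigl(\tfrac{1}{2}-j,\tfrac{3}{8}\bigr),
\]
which is an even entire function of $z$. To evaluate the Hurwitz zeta at these half-integer arguments, I apply the functional equation
\[
\zeta(1-s,p/q)=\frac{2\Gamma(s)}{(2\pi q)^{s}}\sum_{k=1}^{q}\cos\bigl(\tfrac{\pi s}{2}-\tfrac{2\pi kp}{q}\bigr)\zeta(s,k/q)
\]
with $s=j+\tfrac{1}{2}$ and $p/q=3/8$, obtaining $\zeta(\tfrac{1}{2}-j,\tfrac{3}{8})$ as a linear combination of the eight values $\zeta(j+\tfrac{1}{2},k/8)$, $k=1,\dots,8$.

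For $j\ge 1$ the series $\zeta(j+\tfrac12,k/8)=\sum_{n\ge 0}(n+k/8)^{-j-1/2}$ converges absolutely, so I can expand it and reindex $m=8n+k$ to collapse the finite sum over $k$ with the sum over $n\ge 0$ into a single sum over $m\ge 1$. Simplification of the Gamma factors via Legendre's duplication formula $\Gamma(j+\tfrac12)=(2j)!\sqrt{\pi}/(4^{j}j!)$ reduces the coefficient of $z^{2j}$ in $\Phi(z)+\Phi(-z)$ to
\[
\frac{2\pi^{j}}{j!}\sum_{m\ge 1}\frac{\cos\bigl(\pi(2j+1)/4-3\pi m/4\bigr)}{m^{j+1/2}}.
\]
Comparing with the Taylor expansion of the right-hand side via $\cos(A_m-B_m)=\cos A_m\cos B_m+\sin A_m\sin B_m$, where $A_m=\pi(3m-1)/4$ and $B_m=\pi z^2/m$, I read off the coefficient of $z^{2j}$ on that side. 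The two formulas agree for each $j\ge 1$, thanks to the elementary identities $\cos(\pi(4l+1)/4-3\pi m/4)=(-1)^{l}\cos A_m$ and $\cos(\pi(4l+3)/4-3\pi m/4)=(-1)^{l}\sin A_m$.

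The main obstacle is the constant term, $j=0$: the value $\zeta(\tfrac12,k/8)$ is only defined by analytic continuation, its Dirichlet series diverges, and the right-hand side series $2\sum_{m\ge 1}\cos A_m/\sqrt{m}$ is merely conditionally convergent, so the naive reindexing is not legal there. I would address this either by (a) observing that both sides are entire functions of $z$ (for the right-hand side this follows from Dirichlet's test, using that the $8$-periodic sequence $\cos A_m$ sums to zero over one period), so that having already matched all Taylor coefficients of orders $\ge 1$, the difference is a constant, which one verifies vanishes at a single convenient value of $z$; or (b) establishing the numerical identity $\sqrt{2}\,\zeta(\tfrac12,3/8)=2\sum_{m\ge 1}\cos A_m/\sqrt m$ directly by Abel summation, combining the Hurwitz equation at $s=\tfrac12$ with the regularized limit $\lim_{s\searrow 1/2}\sum_{m\ge 1}\cos A_m/m^s=\tfrac{1}{\sqrt{8}}\sum_{k=1}^{8}\cos A_k\,\zeta(\tfrac12,k/8)$. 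Either route yields the equality of entire functions on $\mathbb{C}^{\times}$.
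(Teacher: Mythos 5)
Your proposal is correct and follows essentially the same route as the paper: both prove the identity by matching Taylor coefficients at $z=0$ via the expansion \eqref{eq:phitaylor} and a Hurwitz-type functional equation. The only real difference is that the paper uses the exponential form of Hurwitz's formula, valid for $0<a<1$ and $\re s>0$, which covers the constant term directly, whereas your rational-argument version forces a separate treatment of $j=0$; of your two fixes, (b) is the right one (it is exactly Hurwitz's formula at $s=\tfrac12$ in its conditionally convergent range), while (a) is vaguer, since verifying the constant at a ``convenient value'' essentially requires that same evaluation.
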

    \begin{proof}
    The left-hand side of~\eqref{eq:phifeq} is clearly an even entire function by~\eqref{eq:phitaylor}. Rewriting the right-hand side as
    \[2\sum_{n\ge1}\frac{\cos(\pi\frac{3n-1}{4})}{\sqrt{n}}+
      2\sum_{n\ge1}\frac{\cos(\pi(\frac{3n-1}{4}-\frac{z^2}{n}))-\cos(\pi\frac{3n-1}{4})}{\sqrt{n}}\]
    and using absolute uniform convergence on compacts for the second series, we get that the right-hand side of~\eqref{eq:phifeq} is also even and entire. Therefore, it suffices to check that both sides of~\eqref{eq:phifeq} have the same Taylor coefficients. By~\eqref{eq:phitaylor}, the coefficient of $z^{2m}$ of the left-hand side of~\eqref{eq:phifeq} is
    \[A_m=\frac{(-2\pi)^{2m}2^{m+1/2}\zeta(1/2-m,3/8)}{(2m)!}.\]
    For the right-hand side of~\eqref{eq:phifeq}, the $2m$-th Taylor coefficient is
    \[B_m=\frac{\pi^m}{m!}\sum_{n\ge1}\Big(\ee(-\tfrac{m}{4})
        \frac{\ee(\frac{3n-1}{8})}{n^{m+1/2}}
       +\ee(\tfrac{m}{4})\frac{\ee(-\frac{3n-1}{8})}{n^{m+1/2}}\Big).\]
    The equality $A_m=B_m$ can then be easily derived from Hurwitz's formula
	\[\zeta(1-s,a)=\frac{\Gamma(s)}{(2\pi)^s}\Big(\ee(-\tfrac{s}{4})\sum_{n\ge1}\tfrac{\ee(na)}{n^s}+\ee(\tfrac{s}{4})\sum_{n\ge1}\tfrac{\ee(-na)}{n^s}\Big),\]
    that holds for $0<a<1$, $\re s>0$, together with the duplication formula for the gamma function.
    \end{proof}
Since $\Phi(z)$ decays exponentially as $z\to+\infty$,~\eqref{eq:phifeq} shows that to understand the behavior of $\Phi(z)$ for large negative $z$, it suffices to understand the behavior of the simpler function
    \begin{equation} \label{eq:psidef}
    \Psi(x):=\sum_{n\ge1}\frac{2\cos(\pi(\frac{3n-1}{4}-\frac xn))}{\sqrt{n}}\,.
    \end{equation}

\subsection{Estimates of \texorpdfstring{$\Phi(x)$}{Phi(x)} for \texorpdfstring{$x<0$}{x<0}}
    If we had square root cancellation in~\eqref{eq:psidef}, $\Psi(x)$  would grow more slowly than any positive power of $x$. Falling short of proving that, we have the following weaker result.
    \begin{theorem}\label{thm:lindelof}
    We have 
    \[ \Psi(x)\ll  |x|^{0.108} \]
    when $x\to +\infty$.
    \end{theorem}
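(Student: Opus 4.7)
The plan is to treat $\Psi(x)$ as a classical exponential sum with a slowly decaying weight and bound it via exponent-pair machinery after a preparatory split. Begin by writing
\[
\Psi(x) = 2\,\re \sum_{n\ge 1} \frac{c_n}{\sqrt{n}}\, e^{-\pi i x/n}, \qquad c_n := e^{i\pi(3n-1)/4},
\]
and note that $(c_n)$ is $8$-periodic with $\sum_{n=1}^{8} c_n = 0$, so the partial sums $C(t) := \sum_{n\le t} c_n$ are uniformly bounded. Cut the sum at $N := x$. The tail $\sum_{n>N}$ is controlled by Abel summation: combining $C(t) \ll 1$ with $|\tfrac{d}{dt}(t^{-1/2} e^{-\pi i x/t})| \ll t^{-3/2}(1 + x/t)$ gives $|\sum_{n > N} c_n\, n^{-1/2} e^{-\pi i x/n}| \ll N^{-1/2} + xN^{-3/2} = O(x^{-1/2})$, which is absorbed into the final bound.

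For the main sum $\sum_{n \le N}$, I would do a dyadic decomposition $n \in [M, 2M]$ with $M = 2^j \le N$, and further decompose each dyadic block into residue classes modulo $8$ so that the factor $c_n$ becomes a constant $c_r$ on each sub-sum. On such a sub-sum the phase $f(n) = -x/(2n)$ has derivatives $|f^{(j)}(n)| \asymp x/n^{j+1}$, which matches the hypotheses of exponent-pair theory with range parameter $F \asymp x/M$. Any valid exponent pair $(k, l)$ then gives
\[
\sum_{\substack{n \sim M \\ n \equiv r\,(8)}} e(f(n)) \ll (x/M)^k M^{l-k} = x^k M^{l - 2k},
\]
and after division by $\sqrt M$ each block contributes at most $x^k M^{l - 2k - 1/2}$. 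One then takes the minimum with the trivial bound $O(\sqrt M)$ from $\sum_{n \sim M} n^{-1/2}$ and sums dyadically; for an exponent pair satisfying $l \le 2k + \tfrac12$ the whole sum is bounded by $\sqrt{M_0}$, where $M_0 = x^{k/(2k+1-l)}$ is the crossover point, independently of~$N$.

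The specific numerical exponent $0.108$ then comes from optimizing over exponent pairs. The elementary boundary pair $(k,l) = (1/9,\,13/18)$, obtainable as $A(2/7, 4/7)$ in the van der Corput $A$–$B$ calculus, already yields $\Psi(x) \ll x^{1/9}\log x$; sharper exponent pairs arising from modern refinements (Huxley, Bourgain–Watt, etc.) for monomial-phase exponential sums push the exponent down to $0.108$. Under the exponent pair conjecture (every $(\eps, 1/2 + \eps)$ is an exponent pair) one would recover $\Psi(x) \ll x^\eps$, consistent with the remark following Theorem~\ref{thm:positivereallarge}. The only real obstacle beyond routine bookkeeping is the splitting into residue classes modulo~$8$: without it, the constant $3/8$ in $f'(n) = 3/8 + x/(2n^2)$ prevents the standard exponent-pair hypothesis $|f^{(j)}| \asymp F N^{-j}$ from holding, and one must first strip off the linear component to obtain a genuine monomial-type sum to which the calculus applies cleanly.
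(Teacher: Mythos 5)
Your overall strategy (split off the $8$-periodic mean-zero coefficient, bound the tail by partial summation, decompose dyadically into residue classes mod $8$, and apply exponent pairs to the monomial phase $x/(2n)$) is the same as the paper's for the range $n\le\sqrt{x}$, but there is a genuine gap in the range $\sqrt{x}\ll n\le x$. There the derivative of your phase is $\asymp x/M^2\ll 1$, and the exponent-pair inequality with $k>0$ simply does not hold in that regime: for the top block $M\asymp x$ the phases $x/(2(8k+r))$ vary by only $O(1)$ across the block, so within a fixed residue class there is no power cancellation at all and the weighted block is genuinely of size $\asymp\sqrt{M}\asymp\sqrt{x}$, while your claimed bound $x^kM^{l-2k-1/2}=x^{l-k-1/2}$ (e.g.\ $x^{1/9}$ for $(1/9,13/18)$) is far smaller. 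Taking the minimum with the trivial bound does not save you, since the trivial bound is $\sqrt{M}\gg x^{1/4}$. The only cancellation available for $n\gg x^{2/3}$ comes from the mean-zero coefficients $c_n$, which your residue-class splitting discards; so you must cut the main sum at $x^{2/3}$ (your own tail argument gives $O(1)$ from that point on, so this part is easily repaired), and for the intermediate range $\sqrt{x}<n\le x^{2/3}$ you need a separate ingredient: a first-derivative (Kusmin--Landau type) estimate, valid precisely because $|f'|\le 2/3$ there, giving $\ll M^2/x$ per block and $O(1)$ in total after partial summation. This is exactly what the paper's Lemma~\ref{lem:titsch} supplies, and it cannot be replaced by the exponent-pair calculus as you have set it up.

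The second problem is the numerical exponent. The only bound you actually substantiate is the single-pair crossover with $(1/9,13/18)$, which yields $x^{1/9}\log x=x^{0.111\ldots}\log x$, not $x^{0.108}$; the assertion that ``sharper exponent pairs push the exponent down to $0.108$'' is not backed by any computation, and in your single-pair scheme the quantity to minimize is $k/(2(1+2k-l))$ subject to $l\le 2k+\tfrac12$, for which the standard candidates do worse (Bourgain's $(13/84,55/84)$ gives $\approx0.118$, Huxley's pair $\approx0.119$, and the pair $(4742/38463,35371/51284)$ gives $\approx0.111$). The paper reaches $0.108$ differently: it restricts exponent pairs to blocks $y\le\sqrt{x}$ and balances \emph{two} different pairs from Trudgian--Young over the complementary ranges $y\le x^{0.27}$ and $x^{0.27}<y\le x^{1/2}$, the maximum $\approx x^{0.1079}$ occurring at the balance point. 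So to complete your argument you would need both the missing first-derivative estimate for the middle range and a genuine two-pair (or otherwise optimized) treatment of the blocks below $\sqrt{x}$.
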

    
    We require the following classical result which follows from \cite[Lem. 4.2, Lem. 4.8]{T}.
    \begin{lemma}\label{lem:titsch}
    Suppose that  $f$ is a real valued function in $C^1([a,b])$ and that $f'$ is monotonic. If $0<m\le |f'(t)|\le \frac23$ for all $t$ in $[a,b]$, then
    \[ \sum_{a<n\le b} \ee(f(n)) \ll \frac{1}{m}. \]  
    \end{lemma}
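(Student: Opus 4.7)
The plan is to establish this classical first-derivative test (the Kuzmin--Landau inequality) via Abel summation. The key preliminary observation is that the hypotheses force $f'$ to have constant sign on $[a,b]$---since $f'$ is continuous, monotonic, and $|f'|\geq m>0$---and by the mean value theorem the differences $d_n := f(n+1)-f(n)$ lie entirely in either $[m,2/3]$ or $[-2/3,-m]$, and moreover the sequence $\{d_n\}$ is itself monotonic in $n$ because $f'$ is monotonic. This reduces the trigonometric-sum problem to a clean combinatorial identity.

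Next I would exploit the algebraic identity $\ee(f(n+1))-\ee(f(n)) = \ee(f(n))(\ee(d_n)-1)$, which rearranges to
\[\ee(f(n)) = \frac{\ee(f(n+1))-\ee(f(n))}{\ee(d_n)-1}.\]
Setting $M=\lceil a\rceil$ and $N=\lfloor b\rfloor$, I would apply Abel summation to the resulting representation of $\sum_{n=M}^{N-1}\ee(f(n))$, with $a_n := \ee(f(n+1))-\ee(f(n))$ (whose partial sums telescope and are bounded by $2$) and $b_n := 1/(\ee(d_n)-1)$. The sum becomes a boundary contribution $A_{N-1}b_{N-1}$ minus $\sum_n A_n(b_{n+1}-b_n)$, so the whole problem reduces to (a) a pointwise bound on $|b_n|$ and (b) a total-variation bound on $n\mapsto b_n$.

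The pointwise bound is immediate: for $|d_n|\in[m,2/3]$ one has $|\ee(d_n)-1| = 2|\sin\pi d_n| \gg m$, using $\sin\pi t\geq 2t$ on $[0,1/2]$ and $\sin\pi t\geq \sqrt{3}/2$ on $[1/2,2/3]$; hence $|b_n|\ll 1/m$, and the boundary contribution is $O(1/m)$. The step I expect to be the real crux is the total-variation estimate. Here I would use the closed-form identity $1/(\ee(t)-1) = -\tfrac{1}{2}-\tfrac{i}{2}\cot(\pi t)$, so $b_{n+1}-b_n = -\tfrac{i}{2}\bigl(\cot(\pi d_{n+1})-\cot(\pi d_n)\bigr)$. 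Since $\{d_n\}$ is monotonic and $t\mapsto \cot(\pi t)$ is itself monotonic on each of the intervals $(0,1)$ and $(-1,0)$, the sequence $\cot(\pi d_n)$ is monotonic in $n$, so the sum of absolute differences telescopes:
\[\sum_n |b_{n+1}-b_n| = \tfrac{1}{2}|\cot(\pi d_{N-1})-\cot(\pi d_M)| \leq |\cot(\pi m)| \ll \tfrac{1}{m}.\]
Combining this with the trivial bound $|A_n|\leq 2$ gives $\sum_n|A_n(b_{n+1}-b_n)|\ll 1/m$, and together with the boundary term the full sum is $O(1/m)$, as claimed.
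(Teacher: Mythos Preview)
Your argument is correct and is precisely the classical Kuzmin--Landau/van der Corput proof; the paper does not give an independent argument but simply cites Titchmarsh's Lemmas~4.2 and~4.8, which are proved in exactly this way (Abel summation against the telescoping differences, using that $1/(\ee(t)-1)=-\tfrac12-\tfrac{i}{2}\cot\pi t$ has bounded variation when $t$ ranges monotonically through $[m,2/3]$). Two cosmetic points: the inequality $|\cot(\pi d_{N-1})-\cot(\pi d_M)|\le|\cot(\pi m)|$ should really be $\le |\cot(\pi m)|+|\cot(2\pi/3)|\ll 1/m$, since $\cot(\pi t)$ changes sign at $t=1/2$; and you should add the single leftover term $\ee(f(N))=O(1)=O(1/m)$ to cover the full range $a<n\le b$.
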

    \begin{proof}[Proof of Theorem~\ref{thm:lindelof}] We note that $\Psi(x)/2$ is the real part of
   \begin{equation} \label{eq:sum}  \Theta(x)\coloneqq \sum_{n\ge 1}\frac{\exp(i \pi (\tfrac{3n-1}{4}-\tfrac{x}{n}))}{\sqrt{n}},\end{equation}
and our goal will therefore be to estimate the size of $\Theta(x)$ for large $x$. 

We begin with the estimation of the contribution to the sum from large $n$. Setting
    \[  \Theta_y(x)\coloneqq \sum_{n> y}\frac{\exp(i \pi (\tfrac{3n-1}{4}-\tfrac{x}{n}))}{\sqrt{n}}\]
and
    \[ F_y(t)\coloneqq \sum_{y<n\le t} \frac{\exp(\tfrac{i\pi(3n-1)}{4})}{\sqrt{n}},\]
    we find by partial summation that
    \begin{equation} \label{eq:largen} \Theta_y(x)=i\pi x \int_y^\infty \frac{F_y(\infty)- F_y(t)}{t^2} e^{-i\pi \frac{x}{t}} dt . \end{equation}
    Hence, since $F_y(\infty)- F_y(t)\ll 1/\sqrt{t}$, $\Theta_y(x)$ is trivially bounded when $y= x^{\frac23}$. The remaining problem is therefore to estimate
    \[  R(x)\coloneqq \sum_{1\le n \le x^{\frac23} }\frac{\exp(i \pi (\tfrac{3n-1}{4}-\tfrac{x}{n}))}{\sqrt{n}} . \]
 
 By periodicity of the exponential $\exp(i \pi \tfrac{(3n-1)}{4})$, it suffices to estimate sums of the form
 \[ \sum_{ k \le x^{\frac23}} \frac{\exp(i\pi \frac{x}{8k+q})}{\sqrt{8k+q}}, \]   
 where $0\le q<8$. We now show that
 \begin{equation} \label{eq:large} \sum_{x^{\frac12} < k \le x^{\frac23}} \frac{\exp(i\pi \frac{x}{8k+q})}{\sqrt{8k+q}} \ll 1.\end{equation}
 Indeed, setting
 \[ G_x(t)\coloneqq \sum_{x^{\frac12}< k \le t} \exp(i\pi \frac{x}{8k+q}), \]
 we get by partial summation that
 \[ \sum_{x^{\frac12} < k \le x^{\frac23}} \frac{\exp(i\pi \frac{x}{8k+q})}{\sqrt{8k+q}} \ll \frac{G_x(x^{\frac23})}{x^{\frac13}}+\int_{x^{\frac12}}^{x^{\frac23}} \frac{G_x(t)}{t^{\frac32}} dt.\] 
 By Lemma~\ref{lem:titsch}, we have $|G_x(t)|\ll t^2/x$ which yields the desired bound \eqref{eq:large}.
  
Our remaining task is to find a bound for 
   \[ \sum_{0 \le k \le x^{\frac12}} \frac{\exp(i\pi \frac{x}{8k+q})}{\sqrt{8k+q}}. \]   
We split the sum dyadically and find that
  \[ \sum_{y < k \le 2y} \frac{\exp(i\pi \frac{x}{8k+q})}{\sqrt{8k+q}} \ll  x^{\kappa} y^{\lambda-\frac12-2\kappa},\]
where $(\kappa,\lambda)$ is an exponent pair. Using the pair $(\frac{4742}{38463}+\varepsilon,\frac{35371}{51284}+\varepsilon)$ in the range $x^{0.27}<y \le x^{\frac12}$ and the pair $(\frac{18}{199}+\varepsilon,\frac{593}{796}+\varepsilon)$ when $1\le y \le x^{0.27}$ (see \cite[Lemma 1.1]{TY}), we obtain the desired result.
\end{proof}

The following result yields the lower bound in \eqref{eq:sqrsum}. 
\begin{theorem} \label{thm:moment}
  We have
  \[ \int_{T}^{2T} |\Psi(x)|^2dx = T \log T + O(T)\]
when $T\to \infty$.
\end{theorem}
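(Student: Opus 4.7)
The plan is to reduce $\int_T^{2T}|\Psi|^2\,dx$ to the second moment of $\Theta(x)=\sum_{n\ge 1}c_n e^{-i\pi x/n}$ with $c_n=e^{i\pi(3n-1)/4}/\sqrt n$, via the identities $\Psi=2\re\Theta$ and $|\Psi|^2=2|\Theta|^2+2\re(\Theta^2)$, and then to apply the Montgomery--Vaughan mean value theorem.

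The contribution of $\int_T^{2T}\Theta^2\,dx$ is $O(T)$: truncating at $N\asymp\sqrt T$, the frequency $\pi(1/n+1/m)$ is bounded below by $2\pi/N$, so $|\int_T^{2T}e^{-i\pi x(1/n+1/m)}\,dx|\ll mn/(m+n)$ and $\sum_{m,n\le N}\sqrt{mn}/(m+n)\ll N^2=O(T)$. The tail beyond $\sqrt T$ is controlled by partial summation using that $e^{i\pi(3n-1)/4}$ is $8$-periodic with zero mean per period (so its partial sums are $O(1)$), exactly as in the proof of Theorem~\ref{thm:lindelof}.

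For the main task $\int_T^{2T}|\Theta|^2\,dx=\tfrac12 T\log T+O(T)$, I would apply the Montgomery--Vaughan (Hilbert-type) inequality
\[\int_T^{2T}\Bigl|\sum_{n\le N}a_n e^{i\lambda_n x}\Bigr|^2\,dx=T\sum_{n\le N}|a_n|^2+O\Bigl(\sum_{n\le N}|a_n|^2/\delta_n\Bigr)\]
to $\Theta_N$ with $\lambda_n=-1/(2n)$ and $\delta_n\asymp n^{-2}$, obtaining $\int_T^{2T}|\Theta_N|^2\,dx=T\log N+O(T+N^2)$. With $N=\lfloor\sqrt T\rfloor$ this yields the main term $\tfrac12 T\log T+O(T)$.

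It remains to control the tail $R=\Theta-\Theta_N$ at $N=\sqrt T$. The pointwise bound $|R(x)|\ll T^{1/4}$ coming from partial summation in the proof of Theorem~\ref{thm:lindelof} is too weak for $\int|R|^2\,dx$; however, using the integral representation
\[R(x)=i\pi x\int_{\sqrt T}^\infty G(t) e^{-i\pi x/t}\,dt\]
with $|G(t)|\ll t^{-5/2}$, one can write
\[\int_T^{2T}|R(x)|^2\,dx=\pi^2\iint G(t)\overline{G(s)}\Bigl(\int_T^{2T}x^2 e^{-i\pi x(1/t-1/s)}\,dx\Bigr)\,dt\,ds\]
and separate the diagonal region $|1/t-1/s|\lesssim 1/T$ (contributing $O(T)$ via the $O(T^3)$ trivial bound on the inner $x$-integral together with $|G(t)G(s)|\ll t^{-5}$) from the off-diagonal region (handled by dyadic summation against the cancellation bound $|\int_T^{2T}x^2 e^{-i\pi x(1/t-1/s)}\,dx|\ll T^2/|1/t-1/s|$). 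The cross term $\re\int_T^{2T}\Theta_N\overline{R}\,dx$ is treated analogously. The main obstacle is that absolute-value estimates on the tail yield only $O(T\log T)$, so one must carry out the double integral using oscillation rather than magnitude bounds to keep the total error $O(T)$.
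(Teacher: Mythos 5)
Your overall strategy is the same as the paper's: reduce to the one-sided sum $\Theta$, truncate at $N\asymp\sqrt{T}$, get the main term $T\log T+O(T)$ from the Montgomery--Vaughan/Hilbert-type inequality (your bookkeeping $|\Psi|^2=2|\Theta|^2+2\re(\Theta^2)$ with main term $2\cdot\tfrac12 T\log N$ is in fact cleaner than the paper's display), and kill $\int_T^{2T}\Theta^2\,dx$ by termwise oscillation. The divergence, and the genuine gap, is in how you handle the tail $R=\Theta-\Theta_N$. You assert that the only pointwise information available from the proof of Theorem~\ref{thm:lindelof} is $|R(x)|\ll T^{1/4}$ (pure partial summation), and you therefore try to estimate $\int_T^{2T}|R|^2\,dx$ and the cross term $\int_T^{2T}\Theta_N\overline{R}\,dx$ directly through the double-integral representation with $|G(t)|\ll t^{-5/2}$. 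As you yourself concede, this only yields $O(T\log T)$: the band $1/T<|1/t-1/s|\lesssim 1/t$ (equivalently, frequencies $|1/n-1/m|$ down to $\asymp 1/T$ near the cut $n\approx m\approx\sqrt{T}$) genuinely produces a $\log T$ loss if one uses magnitude bounds, and you do not exhibit any oscillation mechanism that removes it. Since the claimed error term is $O(T)$, an unresolved $O(T\log T)$ is of the same size as the main term, so the asymptotic is not established by the proposal.

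The missing ingredient is exactly what the paper uses: the proof of Theorem~\ref{thm:lindelof} gives more than partial summation for the tail, namely $\Theta_{\sqrt{T}}(x)\ll 1$ for $x\in[T,2T]$ (the range $n\gg\sqrt{x}$ is handled by the first-derivative test, Lemma~\ref{lem:titsch}, together with the trivial bound beyond $x^{2/3}$, and the few transitional dyadic blocks with $n\asymp\sqrt{x}$ are $O(1)$ already from the exponent-pair estimate). With this pointwise bound, $\int_T^{2T}|R|^2\,dx=O(T)$ is immediate and your elaborate double-integral analysis is unnecessary. The cross term still needs oscillation, but with a single cut at $N=\sqrt{T}$ the frequencies $1/n-1/m$ degenerate near the cut and termwise absolute bounds again give $\asymp T\log T$; the paper circumvents this by replacing $\Theta_N$ and $R$ by $E_{\sqrt{T/2}}$ and $\Theta_{\sqrt{2T}}$ (the discarded blocks being $O(1)$ pointwise), so that $|1/n-1/m|\gg 1/n$ and termwise integration with partial summation in $m$ gives $\ll\sum_{n\le\sqrt{T}}\sqrt{n}\ll T^{3/4}$. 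To repair your proof you should import these two devices; without them the error analysis does not close.
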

\begin{proof}
The proof of Theorem~\ref{thm:lindelof} shows that $\Theta_{\sqrt{T}}(x)$ is a bounded function on $[-2T,2T]$. Setting
\[ E_{\sqrt{T}}(x)\coloneqq \sum_{n\le \sqrt{T}} \frac{\exp(i \pi (\tfrac{3n-1}{4}-\tfrac{x}{n}))}{\sqrt{n}}, \] we therefore have
 \begin{equation} \label{eq:splitint} \int_{T}^{2T} |\Psi(x)|^2dx = \int_{T}^{2T}|E_{\sqrt{T}}(x)|^2 dx - 2\operatorname{Re}\int_T^{2T} E_{\sqrt{T}}(x)\overline{\Theta_{\sqrt{T}}(x)}dx+O(T).  \end{equation}
 Opening the square and integrating termwise, we see by invoking Hilbert's inequality that the first term on the right-hand side of \eqref{eq:splitint} is $T\log T+ O(T) $. The middle term is $O(T)$. Indeed, since both $E_{\sqrt{T/2}}-E_{\sqrt{T}}$ and $\Theta_{\sqrt{T}}-\Theta_{\sqrt{2T}}$ are $\ll 1$, it suffices to observe that
 \[ \int_T^{2T} E_{\sqrt{T/2}}(x)\overline{\Theta_{\sqrt{2T}}(x)}dx
 \ll  \sum_{n\le \sqrt{T}} \sqrt{n} \ll T^{\frac34}, \] 
which follows by termwise integration and a variation of the first part of the proof of Theorem~\ref{thm:lindelof}.  \end{proof}

\subsection{Extraneous zeros in the right half-plane} 
We will now prove items (i) and (iii) of Theorem~\ref{thm:zeros}. We begin
by establishing a lemma which immediately yields item (i) and which is also the key ingredient in the proof of (iii). We set 
 \[  \Delta(t_1,t_2,n)\coloneqq \{z: \  0<\operatorname{Re}z\le n, \  t_1 < \operatorname{Re} (\sqrt{z}-\sqrt{n})^2 < t_2, |\operatorname{Im} (\sqrt{z}-\sqrt{n})^2|< \log n \} \]
and let $N(f;t_1,t_2,n)$ be the number of zeros of $f$ in $\Delta(t_1,t_2,n)$, with multiplicities accounted for  in the usual way.
\begin{lemma}\label{lem:regularzeros}
When $n$ is large enough and $3\le t_1<t_2 \le n/2$, we have
\begin{equation} \label{eq:Nzeros}
     |N(f_n;t_1,t_2,n)-(t_2-t_1)|\le 4 .
     \end{equation}
\end{lemma}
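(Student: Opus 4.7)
The plan is to reformulate the counting problem via the conformal change of variables $s = (\sqrt{z}-\sqrt{n})^2$, with the branch chosen so that $\re(\sqrt{z}-\sqrt{n})\le 0$ (corresponding to $\re z\le n$). Under this map, $\Delta(t_1,t_2,n)$ is biholomorphic to the rectangle
\[ R \coloneqq \{s\in\C : t_1 < \re s < t_2,\ |\im s| < \log n\}, \]
so $N(f_n;t_1,t_2,n)$ equals the number of zeros in $R$ of
\[ G(s) \coloneqq \sqrt{n}\,\frac{f_n(z(s))}{\sin\pi(z(s)-n)}. \]
Combining Lemma~\ref{prop:fnapprox} with the functional equation $\Phi(-\sqrt{s}) = \Psi(s) - \Phi(\sqrt{s})$ from Lemma~\ref{prop:Psi}, we obtain the decomposition
\[ G(s) = \Psi(s) - \Phi(\sqrt{s}) + \sqrt{n}\,E_n(z(s)). \]
On $R$ we have $\re\sqrt{s}\ge\sqrt{t_1}\ge\sqrt{3}$, so $\Phi(\sqrt{s})$ is uniformly bounded by an absolute constant of order $e^{-3\pi}$. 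I would then further split $\Psi(s) = 2\sin(\pi s) + \Psi_0(s)$, isolating the dominant $k=1$ summand in~\eqref{eq:psidef}, and apply Rouch\'e's theorem on $\partial R$ to compare $G(s)$ with $2\sin(\pi s)$.

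The zeros of $2\sin(\pi s)$ inside $R$ are the integers in $(t_1,t_2)$, of which there are $\lfloor t_2\rfloor - \lceil t_1\rceil + 1 = (t_2-t_1) + O(1)$. To verify the Rouch\'e inequality $|G(s) - 2\sin(\pi s)| < |2\sin(\pi s)|$ on $\partial R$, I would argue as follows. On the horizontal edges $|\im s| = \log n$ we have $|2\sin(\pi s)|\asymp n^{\pi}$, which dominates the term-by-term estimate $|\Psi_0(s)|\ll n^{\pi/2}$ coming from the $k=2$ summand. On the vertical edges, after replacing $t_1,t_2$ by values within distance $\tfrac{1}{2}$ chosen so that $|\sin(\pi t_j)|\ge \sin(\pi/4)$ (which can change the count of integers in the interval by at most $2$ in total), the bound $|\sin(\pi s)|\ge|\sin(\pi t_j)|$ combined with a partial-summation estimate for $\Psi_0(s)$ in the spirit of the proof of Theorem~\ref{thm:lindelof} also gives the required inequality. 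Summing the boundary perturbation error ($\le 2$) and the $O(1)$ slack between $\lfloor t_2\rfloor - \lceil t_1\rceil + 1$ and $t_2-t_1$ yields the claimed bound $|N(f_n;t_1,t_2,n) - (t_2-t_1)|\le 4$ for $n$ large.

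The main obstacle lies in controlling the error $\sqrt{n}\,E_n(z(s))$ on the right vertical edge $\re s = t_2$ when $t_2$ is close to $n/2$: there $\re\sqrt{z(s)} = \sqrt{n} - \re\sqrt{s}$ can drop to $\sqrt{n}(1-1/\sqrt{2})$, which lies below $\sqrt{n}/3$, and the error bound furnished by Lemma~\ref{prop:fnapprox} as stated becomes exponentially large in $\sqrt{n}$. The cure is to sharpen that lemma by taking $k\ge 2$ in the truncated Kloosterman expansion right after~\eqref{eq:aapprox}: including the $c=3$ Kloosterman contribution replaces the exponent $\sqrt{n}/3$ by $\sqrt{n}/5$ in the bound on $E_n$, and the resulting estimate $\sqrt{n}\,E_n(z(s))\ll \sqrt{n}\,\exp\!\big(\pi\sqrt{3}(\sqrt{n}/5 - \re\sqrt{z(s)})\big)\ll e^{-c\sqrt{n}}$ holds uniformly on $\partial R$. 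With this strengthened approximation in place, the Rouch\'e comparison outlined above closes the argument.
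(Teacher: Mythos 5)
Your setup (the change of variables $s=(\sqrt z-\sqrt n)^2$, the use of Lemma~\ref{prop:fnapprox} and Lemma~\ref{prop:Psi}, and the identification of the $m=1$ term of \eqref{eq:psidef} as $2\sin\pi s$) follows the paper, and your treatment of the horizontal edges $|\im s|=\log n$ (where $2\sin\pi s\asymp n^{\pi}$ dominates the tail, which is $O(n^{\pi/2})$) is sound. The proposal breaks down, however, at the vertical edges: the Rouch\'e inequality $|G(s)-2\sin\pi s|<|2\sin\pi s|$ cannot be established there, and is in general false. Near the real axis one has $|2\sin\pi(t_j+iv)|\le 2\cosh\pi v=O(1)$, whereas the only available pointwise bound for $\Psi_0$ is Theorem~\ref{thm:lindelof}, which gives $O(t_j^{0.108})$, and Theorem~\ref{thm:moment} shows that $\Psi$ is \emph{typically} of size $\sqrt{\log t}$ on $[t,2t]$, so no shift of $t_1,t_2$ by at most $\tfrac12$ can force $|\Psi_0|<\sqrt2$ along an entire vertical segment; indeed, item (iii) of Theorem~\ref{thm:zeros} is proved precisely by exploiting real points where $|G|$ is large, i.e.\ where your comparison function fails to dominate. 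This is the central difficulty of the lemma, and the paper circumvents it without any domination on the vertical sides: choosing the vertical lines so that the leading term of $\Psi$ restricted to them is $\pm2\cosh\pi v$, one checks that $\tfrac{d^2}{dv^2}\re F$ has constant sign along each such line, so $\re F$ changes sign at most twice per segment; this caps the extra winding contributed by the two vertical edges at $4\pi$ \emph{independently of the size of $\Psi$}, and the argument principle (applied to two nested rectangles with half-integer abscissae) then yields the count with error at most $4$.

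Your observation about the error term is fair: for $t_2$ close to $n/2$ one has $\re\sqrt{z(s)}\approx(1-1/\sqrt2)\sqrt n<\sqrt n/3$, so Lemma~\ref{prop:fnapprox} as stated does not cover the whole of $\Delta(t_1,t_2,n)$ (a point the paper passes over silently). But the proposed cure is incomplete as described: if you retain the $c=3$ Kloosterman contribution as a second main term, the error indeed improves to $O(e^{\pi\sqrt3(\sqrt n/5-\re\sqrt z)})$, but the new main term is itself defined by a series of the form $\sum_{\nu}\tilde S(-n,\nu,3)\,e^{2\pi\sqrt{2\nu_+}(\sqrt n/3-\sqrt z)}/\sqrt{2\nu_+}$, which \emph{diverges} exactly in the region $\re\sqrt z<\sqrt n/3$ where you need it; one must analytically continue it (as is done for $\Phi$ via \eqref{eq:phitaylor} and Lemma~\ref{prop:Psi}) and then estimate the continuation on $\partial R$, none of which is addressed. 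So even granting the extra Kloosterman term, the argument still hinges on the vertical-edge domination, which is the genuine gap.
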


\begin{proof}
We set
\[ \varphi(z)\coloneqq (\sqrt{z}-\sqrt{n})^2 \]
and observe that $\varphi$ maps $ \Delta(t_1 ,t_2,n)$ to the rectangle 
\[ R(t_1,t_2,n)\coloneqq \{ w: \ t_1< \operatorname{Re} w < t_2, |\operatorname{Re} w|< \log n\}. \]
The function
\[ F(w)\coloneqq f_n(\varphi^{-1}(w))/\sin (\pi \varphi^{-1}(w)) \]
has then $N(f_n;t_1,t_2,n)$ zeros in $R(t_1,t_2,n)$. 

    Let $m_1$ and $m_2$ be integers such that 
    \[ m_1-\tfrac12\le t_1 < m_1+\tfrac12 \quad \text{and} \quad  m_2-\tfrac12 < t_2 \le m_2+\tfrac12.\]
    We write $w=u+iv$ and $F=U+iV$ and let $k$ be an integer. Then since
    \[ \cos\pi(k+iv)=\cos \pi k \cosh \pi v  ,\]
    we find from Lemma~\ref{prop:fnapprox} and Lemma~\ref{prop:Psi} along with the series representation \eqref{eq:psidef}  that $\frac{d^2}{dy^2}U(k+iy)$ has constant sign. This means that $U$ itself can change sign at most 2 times along any segment of the vertical line $y\mapsto k+iy$. We now apply the argument principle in the two rectangles $R_n(m_1-\frac12,m_2+\frac12)$ and $R_n(m_1+\frac12,m_2-\frac12)$. Repeating the arguments that led to Theorem~\ref{thm:lindelof},
    we find that
    \[ F(u\pm i \log n)=2\cos\pi\big(\tfrac12-i\log n -u \big)(1+o(1)),\]
    which means that the total change of argument of $F$ along the horizontal parts of the boundaries of these rectangles is respectively
    \[ 2\pi(m_2-m_1+1)+o(1) \quad \text{and} \quad 2\pi(m_2-m_1-1)+o(1). \] 
    If $F$ has no zeros on the vertical parts of the boundary of the rectangles, 
    then in either case we can have at most an additional change of the argument of $4\pi$. By making, if necessary, a slight shift of the vertical sides of any of the two rectangles, we may in fact assume that $F$ has no zeros on these vertical line segments, and so we conclude that the total change of argument lies between
    \[ 2\pi(m_2-m_1-3) \quad \text{and} \quad 2\pi(m_2-m_1+3). \]
    By the argument principle and these observations, we arrive at \eqref{eq:Nzeros}.
\end{proof}

\begin{proof}[Proof of item (iii) of Theorem~\ref{thm:zeros}]
We retain the notation from the preceding proof. We fix a large $M$ and set 
\[ E(M)\coloneqq \{ 3\le u \le x: |F(u)|\ge M \}.\]
By Theorem~\ref{thm:moment} and Theorem~\ref{thm:lindelof}, we have then
\[ x\log x \ll_M x + |E(M)| x^{0.216} \]
which implies that
\[ |E(M)|\gg x^{0.784} \]
if $T$ is large enough. The set
\[ \widetilde{E}(M)\coloneqq \bigcup_{u\in E(M)} (u-3,u+3) \]
is a union of open intervals $I$, each of length $\ge 6$, and $|\Psi(x)|\ge M/2$ for $x$ in $\widetilde{E}(M)$ if $M$ is large enough. There will be $\ge |I|-4$ nonreal zeros in
$\{z=x+iy: x\in I, |y|\le \log n\}$ and so the number of nonreal zeros of $F$ in $R(3,x,n) $ is $\gg |E(M)| \gg x^{0.784}$.
\end{proof}

\subsection{Positive extraneous zeros}
In this section, we will prove part (ii) of Theorem~\ref{thm:zeros}. We will do this by showing that the size of the integrals of $\Psi$ and $|\Psi|$ differ over suitably long intervals. This will in turn, by Lemma~\ref{prop:fnapprox} and Lemma~\ref{prop:Psi}, imply a discrepancy between the corresponding integrals of $f_n$ and $|f_n|$ and hence the existence of at least one real zero. 

We begin with the following lemma.
\begin{lemma}\label{lem:integral} We have  
	\begin{equation}\label{eq:Psishort} \int_{T}^{T+V} \Psi(x) dx \ll T^{\frac12} \log T \end{equation}
	when $\sqrt{T}\log^2 T\le V \le T$ and $T$ is large enough.
	\end{lemma}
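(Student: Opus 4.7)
My plan is to transform the integral into a convergent trigonometric sum, truncate at a suitable threshold, bound the tail using the integral representation from the proof of Theorem~\ref{thm:lindelof}, and estimate the head by van der Corput--type techniques. First, term-by-term integration of the series defining $\Psi$ (justified by truncating at level $N$ and letting $N\to\infty$, using that the tail bound from Theorem~\ref{thm:lindelof} gives uniform convergence of the partial sums on $[T,T+V]$), combined with the product-to-sum identity, yields
\begin{equation*}
\int_T^{T+V}\Psi(x)\,dx = \frac{4}{\pi}\sum_{n\ge 1}\sqrt n\,\sin\!\Big(\tfrac{\pi V}{2n}\Big)\cos\!\Big(\pi\Big(\tfrac{3n-1}{4}-\tfrac{W}{n}\Big)\Big),\qquad W=T+V/2.
\end{equation*}
The weight $\sqrt n\,\sin(\pi V/(2n))$ is oscillatory for $n\le V$ and of size $\asymp \pi V/(2\sqrt n)$ for $n\gg V$, which suggests the threshold $y=V^2$, beyond which the sine factor is fully linearized.

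For the tail $n>V^2$, I would employ the integral representation
\begin{equation*}
\Theta_y(x) = i\pi x\int_y^\infty \frac{F_y(\infty)-F_y(t)}{t^2}\,e^{-i\pi x/t}\,dt
\end{equation*}
from the proof of Theorem~\ref{thm:lindelof}, swap orders of integration, and compute $\int_T^{T+V} x\,e^{-i\pi x/t}\,dx$ explicitly. Combined with the estimate $|F_y(\infty)-F_y(t)|\ll t^{-1/2}$, a routine calculation gives a tail contribution of size $T/V^2+1 \ll 1$ in the hypothesized range $V\ge \sqrt T\log^2 T$, which is comfortably within the target bound.

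The main task is then to bound the head
\begin{equation*}
H := \sum_{n\le V^2}\sqrt n\,\sin\!\Big(\tfrac{\pi V}{2n}\Big)\cos\!\Big(\pi\Big(\tfrac{3n-1}{4}-\tfrac{W}{n}\Big)\Big)
\end{equation*}
by $\ll\sqrt T\log T$. Using mod-$8$ periodicity of $e^{i\pi(3n-1)/4}$, I would decompose $H$ into eight smooth exponential sums of the form $\sum_k \sqrt{8k+r}\,\sin(\pi V/(2(8k+r)))\,e^{-i\pi W/(8k+r)}$ over arithmetic progressions, then split each dyadically in $n$. In each dyadic range $n\in[N,2N]$, Abel summation isolates the slowly varying weight from the oscillating exponential, and the partial sums $\sum_{n\le M}e^{-i\pi W/n}$ (restricted modulo~$8$) are estimated by Lemma~\ref{lem:titsch} after a suitable integer shift of the phase, or by van der Corput's second derivative test in the critical range $n\asymp \sqrt W$ where Lemma~\ref{lem:titsch} no longer applies directly. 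The factor $\log T$ in the target bound arises from the $\asymp\log V\ll \log T$ dyadic intervals.

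The hard part will be the transition region $n\asymp V$, where $\sin(\pi V/(2n))$ itself oscillates at a scale comparable to the phase $e^{-i\pi W/n}$. A crude split there would produce bounds of order $V^{1+\varepsilon}$ or $V^{3/2}$, both insufficient when $V$ is close to $T$. Avoiding this requires either a further Weyl-type differencing step or a careful simultaneous analysis of the two oscillations, in the spirit of the exponent pair argument used in the proof of Theorem~\ref{thm:lindelof}, to secure the sharp $\sqrt T\log T$ bound.
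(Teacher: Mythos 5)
Your setup is fine as far as it goes: the termwise integration and the product-to-sum identity are correct (the constant $\tfrac{4}{\pi}$ checks out), and the tail $n>V^2$ is indeed $O(T/V^2)=O(1)$ in the stated range, already from the bound $\Theta_y(x)\ll x\,y^{-3/2}$ that follows from \eqref{eq:largen}. But there is a genuine gap exactly where the lemma lives: you never establish the bound $\ll\sqrt{T}\log T$ for the head $n\le V^2$, and you say so yourself. The tools you list (Lemma~\ref{lem:titsch} after Abel summation, or the second derivative test) give $\ll\sqrt{T}$ per dyadic block only when $n\ll\sqrt{T}$; for $\sqrt{T}\ll n\ll V^2$ they produce terms of size roughly $n^2/\sqrt{T}$ (equivalently the $V^{1+\varepsilon}$ or $V^{3/2}$ bounds you compute near $n\asymp V$), which can be as large as $T^{3/2}$ when $V\asymp T$. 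Deferring the repair to ``a further Weyl-type differencing step or a careful simultaneous analysis of the two oscillations'' is not an argument; that middle range is precisely the hard part of the lemma, so the proof does not close.

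Two remarks on how the paper avoids your difficulty. First, part of the trouble is self-inflicted: rewriting the integrated series around the midpoint $W=T+V/2$ entangles the oscillating weight $\sin(\pi V/(2n))$ with the phase $e^{-i\pi W/n}$. The paper instead bounds $\big|\int_T^{T+V}S_y(x)\,dx\big|$ (with $S_y$ as in \eqref{eq:dyadint}) by $\tfrac{2}{\pi}$ times the maximum over the endpoint values $x\in\{T,T+V\}$ of the single-phase sum $\sum_{y<n\le 2y}\sqrt{n}\,e^{i\pi(\frac{3n-1}{4}-\frac{x}{n})}$, see \eqref{eq:term}, so no ``transition region $n\asymp V$'' ever appears. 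Second, and more substantively, even after that reduction the range $\sqrt{T}<n\le T$ needs an extra idea that your plan does not contain: the paper exploits that $e^{i\pi(3n-1)/4}$ is $8$-periodic with mean zero and performs repeated partial summation/integration by parts (the $G_k$ construction), yielding \eqref{eq:b1}, which is then balanced against the Kuzmin--Landau-type bound \eqref{eq:b2} at $y=T^{(k+1)/(2k+1)}$ with $k=2$; the exponent pair $(\kappa,\lambda)=(\tfrac12,\tfrac12)$ is used only for $n\le\sqrt{T}$, where it gives $\sqrt{T}$ per dyadic block and hence the factor $\log T$, and the range $n>T$ is trivial from \eqref{eq:largen}. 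You would need an analogue of that middle-range argument (or a genuinely new substitute) for your proposal to become a proof.
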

	\begin{proof}
    It will be convenient to prove instead that
    \[ \int_{T}^{T+V} \Theta(x) dx \ll T^{\frac12} \log T\]
    which clearly implies \eqref{eq:Psishort} since $\Psi/2$ is the real part of $\Theta$. 
	We begin by noting that in view of \eqref{eq:largen}, $\Theta_y(x)\ll \frac{x}{y^{\frac32}}$. This entails that
	\[ \int_T^{T+V} \Theta_T(x) dx \ll V T^{-\frac12}. \]
	We split the remaining sum $\Theta(x)-\Theta_T(x)$ from \eqref{eq:sum} dyadically and consider
	\begin{equation} \label{eq:dyadint}  S_y(x)\coloneqq \sum_{y<n\le 2y}\frac{\exp(i \frac{\pi}{4} (\tfrac{3n-1}{4}-\tfrac{x}{n}))}{\sqrt{n}}. \end{equation}
	Integrating term by term, we find that
	\begin{equation} \label{eq:term} \Big|\int_{T}^{T+V} S_y(x) dx\Big| \le \frac{2}{\pi} \max_{T\le x \le T+V} \Big|\sum_{y<n\le 2y}\sqrt{n} \exp(i \pi (\tfrac{3n-1}{4}-\tfrac{x}{n}))\Big|. \end{equation}
	Using again Lemma~\ref{lem:titsch}, we find that
	\begin{equation} \label{eq:b2}  \int_{T}^{T+V} S_y(x) dx \ll \frac{y^{\frac52}}{T} \end{equation}
	when $y\ge \sqrt{T}$. However, setting
	\[ G_1(t)\coloneqq \sum_{y<n\le t} \exp(i \pi (\tfrac{3n-1}{4})), \]
	we get by applying partial summation to the right-hand side of \eqref{eq:term},
	\[ \int_{T}^{T+V} S_y(x) dx \ll x \int_{y}^{2y} G_1(t) \frac{e^{-i\pi \frac{x}{t}}}{t^{\frac32}} dt+O(\sqrt{y}). \]
	We observe that $G_1$ is an $8$-periodic function. Now defining inductively 
	\[ G_k(t)\coloneqq \int_{y}^t G_{k-1}(u) du - \int_{y}^{y+8} G_{k-1}(u) du ,\]
	we get by repeated integration by parts,
	\begin{equation} \label{eq:b1}   \int_{T}^{T+V} S_y(x) dx \ll x^k \int_{y}^{2y} G_k(t) \frac{e^{-i\pi \frac{x}{t}}}{t^{2k-\frac12}} dt+O(\sqrt{y})\ll \frac{T^k}{y^{2k-\frac32}}+ O(\sqrt{y}). \end{equation}
The upper bounds in \eqref{eq:b2} and \eqref{eq:b1} coincide when
	\[ y=T^{\frac{k+1}{2k+1}}, \] and we therefore get
	\[  \int_{T}^{T+V} (\Theta_{\sqrt{T}}(x)-\Theta_T(x))dx \ll T^{\frac12}\]
	if we choose $k=2$. (We could get the exponent $\frac14+\varepsilon$ by choosing $k$ large.)
	
	Finally, for $y\le \sqrt{T}$, picking a suitable exponent pair $(\kappa,\lambda)$, we see from \eqref{eq:dyadint} that
	\[  \int_{T}^{T+V} S_y(x) dx \ll T^{\kappa} y^{\lambda+\frac12-2\kappa}. \]
	Choosing $\kappa=\lambda=\frac12$, we therefore get
	\[ \int_{T}^{T+V} (\Theta(x)-\Theta_{\sqrt{T}}(x))dx \ll \sqrt{T} \log T. \qedhere \] 
	\end{proof}
	
	On the other hand, we also have the following.
	\begin{lemma} \label{lem:short}
	There exists a positive number $M$ such that 
	\[ \int_{T}^{T+M} |\Psi(x)| \, dx \ge 1 \]
	for all real numbers $T$.
	\end{lemma}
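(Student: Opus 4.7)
The plan is to test $\Psi$ against two carefully chosen functions with multiple vanishing moments, so that only the $n=1$ Fourier component of $\Psi$ produces a non-negligible contribution. Take
\[
h_c(u) := \tfrac{1}{10}(\cos\pi u - 9\cos 3\pi u),\qquad h_s(u) := \tfrac{1}{4}(\sin\pi u - 3\sin 3\pi u),
\]
both supported on $[0,2]$. A direct computation shows $\|h_c\|_\infty,\|h_s\|_\infty \le 1$ and that the moments $\int_0^2 u^k h_\ast(u)\,du$ vanish for $k = 0,1,2$. Since $|h_\ast|\le 1$, we have the trivial comparison
\[
\int_T^{T+2}|\Psi(x)|\,dx \ge \Big|\int_0^2\Psi(T+u)h_\ast(u)\,du\Big|,\qquad \ast\in\{c,s\}.
\]

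Substituting the series \eqref{eq:psidef} for $\Psi$ and performing the $u$-integral term by term yields
\[
\int_0^2\Psi(T+u)h_\ast(u)\,du = 2\,\re\sum_{n\ge 1}\frac{e^{i\pi((3n-1)/4 - T/n)}}{\sqrt n}\,\hat h_\ast\!\Big(\tfrac{1}{2n}\Big),
\]
where $\hat h_\ast(\xi) := \int_0^2 h_\ast(u)e^{-2\pi iu\xi}\,du$. Orthogonality of trigonometric functions on $[0,2]$ gives $\hat h_c(1/2)=1/10$ and $\hat h_s(1/2)=-i/4$, so the $n=1$ contribution equals $\tfrac{1}{5}\sin\pi T$ for $\ast=c$ and $\tfrac{1}{2}\cos\pi T$ for $\ast=s$. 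For $n\ge 2$, the vanishing moments force $\hat h_\ast(\xi)=O(\xi^3)$ as $\xi\to 0$; an explicit product-to-sum calculation combined with $|1-e^{-2i\pi/n}|\le 2\pi/n$ yields $|\hat h_\ast(1/(2n))|\ll n^{-3}$, so the tail $2\sum_{n\ge 2}|\hat h_\ast(1/(2n))|/\sqrt n$ is controlled by a convergent series $\ll \sum n^{-7/2}$. Explicit numerical evaluation shows this tail is strictly smaller than $(1/5)\cdot 2^{-1/2}$ for $\ast=c$ and than $(1/2)\cdot 2^{-1/2}$ for $\ast=s$.

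Combining these estimates with $\max(|\cos\pi T|,|\sin\pi T|)\ge 2^{-1/2}$ gives a uniform positive lower bound
\[
\max_{\ast\in\{c,s\}}\Big|\int_0^2\Psi(T+u)h_\ast(u)\,du\Big| \ge c_0 > 0\qquad\text{for all } T\in\R.
\]
Hence $\int_T^{T+2}|\Psi(x)|\,dx \ge c_0$. The lemma follows by taking $M := 2\lceil 1/c_0\rceil$ and partitioning $[T,T+M]$ into $\lceil 1/c_0\rceil$ subintervals of length $2$, on each of which the previous inequality applies.

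The main technical obstacle is the explicit numerical bookkeeping on the tail sum: the multiple vanishing moments of $h_\ast$ are essential, since without them the series $\sum_{n\ge 2}n^{-1/2}|\hat h_\ast(1/(2n))|$ would diverge. With these moments in place the tail is small enough that the $n=1$ oscillation, together with the bound $\max(|\cos\pi T|,|\sin\pi T|)\ge 2^{-1/2}$, produces the required constant $c_0>0$ uniformly in~$T$.
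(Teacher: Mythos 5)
Your proof is correct, and it takes a genuinely different route from the paper. The paper works with $\Psi''$, whose series converges absolutely with coefficients $\asymp n^{-5/2}$, so the first harmonic dominates automatically: Rouch\'e's theorem locates one simple real zero of $\Psi''$ near each integer, and integrating $\Psi''$ against a quartic that vanishes to second order at two consecutive zeros, followed by two integrations by parts, transfers a uniform lower bound for $\int \Psi'' f$ onto $\int |\Psi|$ with no explicit numerics. You instead isolate the $n=1$ harmonic on the Fourier side, testing $\Psi$ directly against the fixed windows $h_c,h_s$ on $[0,2]$ whose first three moments vanish, so that $\hat h_\ast(1/(2n))$ decays fast enough in $n$ for the tail to be beaten by the main term $\tfrac15\sin\pi T$ resp.\ $\tfrac12\cos\pi T$, one of which is always $\ge 2^{-1/2}$ in size; this yields an explicit admissible $M$. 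Two small points to make the write-up airtight: (1) termwise integration of the conditionally convergent series \eqref{eq:psidef} should be justified by its uniform convergence on compacts (Dirichlet's test); (2) the bound "$|\hat h_\ast(1/(2n))|\ll n^{-3}$" must come with explicit constants, since the crude Taylor-remainder bound from the vanishing moments is useless at $n=2$ — but your "product-to-sum" route does deliver them, e.g.\ one finds
\[
\hat h_c(\xi)=\frac{64\,\xi^{3}\bigl(e^{-4\pi i\xi}-1\bigr)}{10\,i\pi\,(1-4\xi^{2})(9-4\xi^{2})},\qquad
\hat h_s(\xi)=\frac{-32\,\xi^{2}\bigl(e^{-4\pi i\xi}-1\bigr)}{4\pi\,(1-4\xi^{2})(9-4\xi^{2})},
\]
giving $|\hat h_c(1/(2n))|\le \tfrac14 n^{-4}$ and $|\hat h_s(1/(2n))|\le \tfrac23 n^{-3}$ for $n\ge2$, so the tails are $\le 0.03$ and $\le 0.16$ respectively, indeed below $\tfrac{1}{5\sqrt2}$ and $\tfrac{1}{2\sqrt2}$. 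In short: the paper's argument is cleaner and constant-free, yours is more elementary and hands-on and produces an explicit $M$; both hinge on the same underlying fact that the first term of $\Psi$ carries a non-degenerate oscillation of period $2$.
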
 
\begin{proof}
Using the fact that
\[ \Psi''(x)=\sum_{n\ge1}\frac{2\pi^2 \cos(\pi (\tfrac{3n-1}{4}-\tfrac{x}{n}))}{n^{\frac52}}\]
and Rouch\'{e}'s theorem, we find that the zeros of $\Psi''$ are real and simple. Moreover, this zero set is uniformly discrete, and there is one zero in the interval $(k,k+1)$ for every integer $k$.
For two consecutive zeros $t_{k}$ and $t_{k+1}$, let $f$ be the polynomial of degree $4$ that vanishes along with its derivative at both $t_{k}$ and $t_{k+1}$ with $f\Psi''>0$  on 
$(t_k,t_{k+1})$. Then
\[ \int_{t_k}^{t_{k+1}} \Psi''(x)f(x) dx= \int_{t_k}^{t_{k+1}} \Psi(x)f''(x)dx \le \max_{x\in [t_k,t_{k+1}]} |f''(x)| \int_{t_k}^{t_{k+1}} |\Psi(x)|\, dx. \]
Since the sequence $t_k$ is uniformly discrete and $|\Psi''|$ is uniformly bounded below at the integers, the integral to the left is uniformly bounded below and the maximum to the right is uniformly bounded above. 
\end{proof}
The proof of item (ii) of Theorem~\ref{thm:zeros} is now immediate. Indeed, it suffices to divide the interval $[x/2,x]$ into $\gg \sqrt{x}/\log^2 x$ subintervals so that Lemma~\ref{lem:integral} applies to each of them. Appealing again to Lemma~\ref{prop:fnapprox} and Lemma~\ref{prop:Psi} along with Lemma~\ref{lem:short}, we see that each of the $\gg \sqrt{x}/\log^2 x$ subintervals must contain at least one zero of $f_n$.

\section{Further estimates for positive arguments of \texorpdfstring{$f_n$}{f\_n}} \label{sec:positivesmall}
This section gives the proof of Theorem~\ref{thm:positiverealsmall} and the upper bound in \eqref{eq:sqrsum} of Theorem~\ref{thm:sqrsum}. We will resort to alternate asymptotic expressions for $f_n$ beyond $\re \sqrt{z} > (1/3+\varepsilon) \sqrt{n} $, where quantitative information about $f_n$ is less accessible.

We will obtain the bound~\eqref{eq:outsidepw} for $f_n(x)$ via
	\[f_n(x)=e^{c \pi n} \int_{-1}^{1} F(t+ic,x)  e^{-i\pi n t} dt,\]
where $F(\tau,z)$ is the generating function for $f_n$ (see~\eqref{eq:gfseries}, \eqref{eq:fngf}). To prove Theorem~\ref{thm:sqrsum}, instead of using this fact directly, we will apply a duality argument along with a general method from~\cite{BRS} for estimating the Fourier coefficients of functions of the form
	\[ F_{\varphi}(\tau):= \frac{1}{2} \int_{-1}^1 K(\tau,w) \varphi(w) dw \]
when $\varphi$ is a function of moderate growth in $\HH$.

\subsection{Behavior of \texorpdfstring{$f_n(x)$}{f\_n(x)} when \texorpdfstring{$0\le x\le n/8$}{0<x<n/8}}\label{subsec:pos}

This section gives the proof of the pointwise bound \eqref{eq:outsidepw} in the range $0\le x \le n/8$. The proof technique relies on \cite[Sec. 6]{BRS}.

We begin by recalling the setup from \cite[Sec. 6]{BRS}. We let $\gamma_{\tau}$ be the (generically unique) element of $\Gamma_{\theta}$ that maps $\tau$ in $\HH$ to the
fundamental domain~$\bF$. We denote by $\bH(\tau)$ the imaginary part of $\gamma_{\tau}\tau$, i.e.,
    \[\bH(\tau)=\im\gamma_{\tau}\tau .\]
It is easy to see that the function $\bH\colon\HH\to\R$ is continuous and $\Gamma_{\theta}$-invariant. We define $\bN\colon\HH\to\Z_{\ge0}$ as one plus
the number of inversions~$S$ that appear in the canonical representation
of $\gamma_{\tau}$, i.e.,
    \[ \bN(\tau) = j+\eps_0+\eps_1  ,\qquad  \gamma_{\tau}=
    S^{\eps_0}T^{2m_1}ST^{2m_2}\dots ST^{2m_j}S^{\eps_1}.\]
In cases when $\gamma_{\tau}$ is not uniquely defined, i.e., when $\tau$ is in $\Gamma_{\theta}\partial\bF$, we let $\bN(\tau)$ be the larger of the two possible values.

We need three lemmas. The first is a slight variation of \cite[Prop. 6.1]{BRS}. Consider the domain~$\mathcal{D}$ illustrated in Figure~\ref{fig:crescent}.
Explicitly we set
    \[\mathcal{D}\coloneqq\{\tau\in \HH\colon |\re\tau|<1,\; \sqrt{3/4}<|\tau|<\sqrt{4/3},\; |\tau\pm 1/2|>1/2\};\]
the particular shape of~$\mathcal{D}$ is not important as long as it contains the geodesic from~$-1$ to~$1$ and lies in $\ol{\bF\cup S\bF}$. Also let $\ell$ be the contour illustrated in Figure~\ref{fig:jell}. The essential feature of $\ell$ is that it has a positive angle with the real line at $0$. By $J^{-1}(\ell)\subset\ol{\bF}$ we denote the preimage of the path $\ell$ under the conformal map $J|_{\bF}$ (see Figure~\ref{fig:jell}).

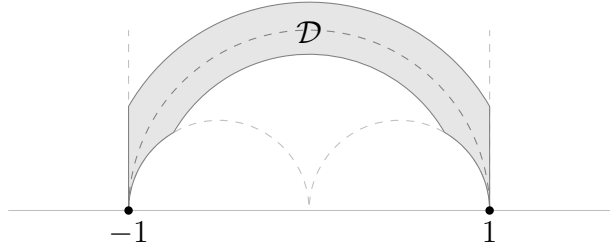
\begin{figure}[ht]
    \centering
    \begin{tikzpicture}
        \definecolor{cv0}{rgb}{0.95,0.95,0.95}
        \definecolor{cv1}{rgb}{0.90,0.90,0.90}
        \clip(-9,-0.5) rectangle (7,3);
        \begin{scope}[scale=0.8,xshift=-1cm]
            \draw[lightgray] (-5,0) -- (5,0);
            \draw[lightgray,dashed] (-3,0) arc  (180:0:1.5);
            \draw[lightgray,dashed]  (3,0) arc  (0:180:1.5);
            \draw[lightgray,dashed]  (-3,0) -- (-3,3);
            \draw[lightgray,dashed]  (3,0) -- (3,3);
            \fill[color=cv1] (-3,0) -- (-3,1.7320) arc (150:30:3.4641) -- (3,0) arc (0:60:1.5) arc (30:150:2.5980) arc (120:180:1.5);
            \draw[gray] (-3,0) -- (-3,1.7320) arc (150:30:3.4641) -- (3,0) arc (0:60:1.5) arc (30:150:2.5980) arc (120:180:1.5);
            \draw[gray,dashed] (-3,0) arc  (180:0:3);
            \draw (0,2.6) node[above]{$\mathcal{D}$};

            \fill[black] (-3,0) circle (0.07) node[below] {$-1$};
            \fill[black] (3,0) circle (0.07) node[below] {$1$};


        \end{scope}
    \end{tikzpicture}
    \caption{The domain $\mathcal{D}$.}
    \label{fig:crescent}
\end{figure}

\begin{lemma}\label{lem:ker}
    There exists a constant $C$ depending only on $\ell$ and $q$ such that
    \[ \int_{J^{-1}(\ell)} |K(\tau, z)|^{q} |dz| \le C \max(1,(\im \tau)^{-2(q-1)}). \]
    when $\tau$ is in $\bF$ and $q\ge 1$.
\end{lemma}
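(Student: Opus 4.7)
This lemma is described as a slight variation of \cite[Prop.~6.1]{BRS}, so my plan is to adapt that argument to general $q\ge1$. The key reduction is to pull the contour $J^{-1}(\ell)$ back to $\ell$ in the $J$-plane via $w=J(z)$. Combining \eqref{mkernel} with the identity~\eqref{eq:theta4dz}, we obtain
\[\int_{J^{-1}(\ell)}|K(\tau,z)|^q\,|dz|
=\frac{|\theta(\tau)(1-2\lambda(\tau))J(\tau)|^q}{\pi}\int_{\ell}\frac{|\theta(z)|^{3q-4}\,|dw|}{|J(\tau)-w|^q|w|^{1/2}|64-w|^{1/2}}\,,\]
and the task reduces to bounding the right-hand side uniformly in $\tau\in\bF$.

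Next, I would analyze $|\theta(z)|^{3q-4}$ along $\ell$. The path $\ell$ is compact and meets the real axis only at its endpoints $w=0$ and $w=64$, at positive angles. Near $w=64$ the point $z$ is near $i$ and every factor is bounded. Near $w=0$, $z$ is near the cusp $\tau=1$; writing $z=1-1/\tau'_z$, the expansions~\eqref{eq:asymptexp} give $|\theta(z)|\asymp|\tau'_z|^{1/2}|w|^{1/8}$ with $|\tau'_z|\asymp\log(1/|w|)$. Combined with the weight $|w|^{-1/2}$ and the positive angle at $0$, the resulting $w$-integral converges once the factor $|J(\tau)-w|^{-q}$ is controlled.

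The remaining work is to bound the $\tau$-dependent quantities, which I would do by splitting $\bF$ into three regions. (a)~On a compact interior portion of $\bF$, every modular factor is bounded and $|J(\tau)-w|\gtrsim1$, so the bound is $O(1)$. (b)~For $\im\tau\to\infty$, $|J(\tau)|\asymp e^{\pi\im\tau}\to\infty$ dominates on the compact $\ell$, giving $|J(\tau)-w|\asymp|J(\tau)|$ and the ratio $|\theta(\tau)(1-2\lambda(\tau))J(\tau)|^q/|J(\tau)-w|^q\asymp|\theta(\tau)(1-2\lambda(\tau))|^q$ tending to $1$. (c)~For $\tau$ approaching a cusp $\pm1$ (i.e., $\im\tau\to0$), substitute $\tau=\pm1\mp1/\tau'$ with $\im\tau'$ large so that $|\tau'|\asymp(\im\tau)^{-1}$; the expansions~\eqref{eq:asymptexp} then yield $|\theta(\tau)(1-2\lambda(\tau))J(\tau)|\asymp|\tau'|^{1/2}|J(\tau)|^{5/8}$ with $|J(\tau)|\asymp|q'|$ exponentially small. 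Splitting $\ell$ according to whether $|w|\ge|J(\tau)|$ (so $|J(\tau)-w|\asymp|w|$) or $|w|<|J(\tau)|$ (where the positive angle at $0$ forces $|J(\tau)-w|\asymp|J(\tau)|$), the $w$-integral evaluates to $\asymp|J(\tau)|^{-5q/8}(\log(1/|J(\tau)|))^{(3q-4)/2}$. Since $\log(1/|J(\tau)|)\asymp 1/\im\tau$, multiplying by the prefactor and collecting the polynomial factor $|\tau'|^{q/2}\asymp(\im\tau)^{-q/2}$ together with the logarithmic contribution $(\im\tau)^{-(3q-4)/2}$ from $|\theta(z)|^{3q-4}$ yields exactly the exponent $-q/2-(3q-4)/2=-2(q-1)$, so the bound $C(\im\tau)^{-2(q-1)}$ emerges.

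The main obstacle is regime~(c): both $J(\tau)$ and portions of $\ell$ approach $0$ simultaneously, and the singular factors $|J(\tau)-w|^{-q}$ and $|w|^{-1/2}$ must be balanced carefully. The positive-angle condition on $\ell$ at $w=0$ is essential, since it ensures $|J(\tau)-w|\asymp\max(|w|,|J(\tau)|)$ throughout the relevant range; the precise cancellation between the exponentially small $|J(\tau)|^{5q/8}$ and the polynomial-and-logarithmic blow-up of the integral produces exactly the claimed $(\im\tau)^{-2(q-1)}$ dependence, which is the essential technical content of the lemma.
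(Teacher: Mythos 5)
Your plan reproduces the paper's proof in its essential steps: the pullback $w=J(z)$ (your displayed identity is the correct direct substitution from \eqref{mkernel} and \eqref{eq:theta4dz}; the paper's corresponding display organizes the weight factors differently but lands on the same bound), the expansions \eqref{eq:asymptexp} giving $|\theta(t(w))|\asymp(\log(1/|w|))^{1/2}|w|^{1/8}$ as $w\to0$ along $\ell$, the separation estimate $|J(\tau)-w|\gg\max(|J(\tau)|,|w|)$ with the split of the $w$-integral at $|w|\asymp|J(\tau)|$, and the final bookkeeping $|\tau'|^{q/2}\asymp(\im\tau)^{-q/2}$, $\log(1/|J(\tau)|)\asymp(\im\tau)^{-1}$, $-q/2-(3q-4)/2=-2(q-1)$. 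Your regime (b) and the arithmetic in regime (c) are correct and match the paper's computation near the cusp.

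The gap is in the claims ``$|J(\tau)-w|\gtrsim 1$ on a compact interior portion of $\bF$'' (regime (a)) and ``the positive angle at $0$ forces the separation'' for \emph{both} cusps $\pm1$ (regime (c)). The map $J$ sends one half of $\bF$ onto the \emph{lower} half-plane, which is the half-plane containing $\ell$, and $J^{-1}(\ell)$ itself lies in the closure of that half of $\bF$. For $\tau$ in that half (in particular near the cusp on that side), and also for $\tau$ near $i$ where $J(\tau)\to 64$ is an endpoint of $\ell$ and the vanishing factor $1-2\lambda(\tau)$ must be weighed against the coalescing pole, the quantity $|J(\tau)-w|$ is not bounded below over $w\in\ell$ — the pole $z=\tau$ of $K(\tau,\cdot)$ can approach the contour — so neither separation claim holds as written and the compactness argument of regime (a) breaks down. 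The paper's proof deals with exactly this point by its opening reduction ``without loss of generality $\re\tau\in(-1,0)$, $|\tau|>1$, $\im\tau\le 1/2$'', i.e.\ by confining itself (via the reflection $\tau\mapsto-\overline{\tau}$, which simultaneously replaces $\ell$ by its mirror image) to the configuration where $J(\tau)$ lies in the closed upper half-plane and $w\in\ell$ in the lower half-plane, bounded away from $64$; only under that restriction is $|J(\tau)-w|\gg\sqrt{|J(\tau)|^2+|w|^2}$ available. Your write-up needs this reduction, or some equivalent discussion of the half of $\bF$ containing $J^{-1}(\ell)$ and of a neighborhood of $\tau=i$, before regimes (a) and (c) can be considered complete.
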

\begin{proof}
    Without loss of generality we may assume that $\re\tau$ is in $(-1,0)$,
    $|\tau|>1$ and that $\im \tau \le 1/2$. To estimate the integral, we change the variable of integration to $w=J(z)$. We let $t(w)$ be the inverse function to $z\mapsto J(z)$. Using~\eqref{eq:theta4dz} to make the change of variables, we obtain
    \[ \int_{J^{-1}(\ell)} |K(\tau, z)|^q |dz| =
    \pi^{-1} |\theta(\tau)|^q
    \int_{\ell}\frac{|\theta(t(w))|^{3q-4} |w|^{q-1} |1-64/w|^{(q-1)/2}} {|J(\tau)-w|^q} |dw|.
    \]
    Since $J(\tau)$ is in the upper half-plane and $w$ is in the lower half-plane, and also since $\ell$ has a positive angle with the real line at $0$,
    we have $|J(\tau)-w|\gg \sqrt{|J(\tau)|^2+|w|^2}$, whence
    \[ \int_{J^{-1}(\ell)} |K(\tau, z)|^q |dz| \ll
    |\theta(\tau)|^q
    \int_{\ell}\frac{|\theta(t(w))|^{3q-4} |w|^{(q-1)/2}}{(|J(\tau)|^2+|w|^2)^{q/2}} |dw|.
    \]
    Now
    \begin{align*} \int_{\ell}\frac{|\theta(t(w))|^{3q-4} |w|^{(q-1)/2}} {(|J(\tau)|^2+|w|^2)^{q/2}} |dw| & \ll 1+ \int_0^{\frac{1}{2}} \frac{t^{7q/8-1} \left(\log \frac{1}{t}\right)^{(3q-4)/2}} {(|J(\tau)|^2+t^2)^{q/2}} dt \\
        & \ll |J(\tau)|^{-q/8} \left(\log \frac{1}{|J(\tau)|}\right)^{-q/2+2(q-1)}, \end{align*}
    from which the result follows. (Here we use crucially the bounds for $\tau\to1$ that follow from the Fourier expansions~\eqref{eq:asymptexp}.)
\end{proof}
The next lemma is an extension of \cite[Prop. 6.6]{BRS}.

\begin{lemma}\label{lem:Ib}
    For $0<y\le 1/2$, we have
    \[ \int_{-1}^{1} \mathbf{I}(x+iy)^{\alpha} dx \ll_{\alpha} \begin{cases} 1,  & |\alpha|<1, \\ \log \frac{1}{y} , & |\alpha|=1, \\ y^{1-|\alpha|}, & |\alpha|> 1.
    \end{cases} \]
\end{lemma}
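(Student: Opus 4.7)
The plan is to reduce the integral to a layer-cake integral of the distribution function of $\mathbf{I}$. Setting $A(T) = |\{x \in [-1,1] : \mathbf{I}(x+iy) > T\}|$ and $B(T) = 2 - A(T)$, we have
\[ \int_{-1}^{1} \mathbf{I}(x+iy)^{\alpha}\, dx = \alpha \int_0^{\infty} T^{\alpha-1} A(T)\, dT \quad (\alpha > 0) \]
and the analogous formula with $|\alpha| B(T)$ for $\alpha < 0$. The lemma then reduces to establishing $A(T) \ll \min(1, 1/T)$ (with $A(T) = 0$ for $T > 1/y$) and the dual $B(T) \ll \min(T, 1)$ (with $B(T) = 0$ for $T < y$ since $\mathbf{I} \ge y$).

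The bound on $A(T)$ uses $\mathbf{I}(\tau) = \max_{(c,d)} y/|c\tau+d|^2$ over pairs $(c,d)$ with $\gcd(c,d)=1$ and $c+d$ odd (the bottom rows of matrices in $\Gamma_{\theta}$). The condition $\mathbf{I}(\tau) > T$ forces some admissible $(c,d)$ to satisfy $(cx+d)^2 + c^2 y^2 < y/T$, which restricts $c \le 1/\sqrt{yT}$ and localizes $x$ in an interval of length $\ll \sqrt{y/T}/c$ around $-d/c$. Summing the measure contributions over the $O(c)$ admissible $d$ for each $c$ gives $A(T) \ll 1/T$ for $T \ge 1$, and $A(T) = 0$ for $T > 1/y$. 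Substituting into the layer-cake formula and splitting at $T = 1$ and $T = 1/y$ recovers the three regimes for $\alpha > 0$ (essentially as in \cite[Prop.~6.6]{BRS}).

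For $\alpha < 0$ we need $B(T) \ll T$ on $[y, 1]$. Geometrically, $B(T)$ is the Euclidean length on $\{\im \tau = y\}$ of the preimage of $\bar{\bF} \cap \{\im z \le T\}$ under the projection $\HH \to \Gamma_{\theta}\backslash\HH \cong \bar{\bF}$. Since hyperbolic and Euclidean arc length along the horocycle $\{\im \tau = y\}$ differ by a factor of $1/y$, the claim $B(T) \ll T$ is equivalent to showing that the horocycle spends hyperbolic length $\ll T/y$ inside $\bF \cap \{\im z \le T\}$ (a region of hyperbolic area $\asymp T$ for $T \le 1$). This can be proved either via Sarnak-type equidistribution of closed horocycles on $\Gamma_{\theta}\backslash\HH$, or elementarily by enumerating the cosets $\gamma \in \Gamma_{\infty}\backslash\Gamma_{\theta}$: each coset contributes the hyperbolic arc of a circular horocycle of Euclidean diameter $1/(c^2 y)$ based at $-d/c$, whose hyperbolic length inside $\bF \cap \{\im z \le T\}$ can be bounded by direct trigonometric computation. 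Substituting $B(T) \ll \min(T, 1)$ into $|\alpha| \int_y^{\infty} T^{\alpha-1} B(T)\, dT$ and splitting at $T = 1$ yields $|\alpha| \int_y^1 T^{\alpha}\, dT + O(1)$, whose three asymptotic regimes ($-1 < \alpha < 0$, $\alpha = -1$, $\alpha < -1$) give the bounds $1$, $\log(1/y)$, and $y^{1-|\alpha|}$ claimed in the lemma.

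The main obstacle is the dual estimate $B(T) \ll T$ for $T \le 1$: this is not contained in \cite{BRS}, and requires geometric or equidistribution input beyond the Kloosterman-style counting that suffices for $A(T)$. Careful bookkeeping for the parity conditions defining $\Gamma_{\theta}$ and for the transition at $T \asymp 1$ between the two regimes is the remaining technical work, but contributes only constants to the final bounds.
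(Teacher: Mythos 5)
The positive-$\alpha$ half of your argument is sound (it is in effect a re-derivation of the bound from \cite[Prop.~6.6]{BRS} by a union bound over bottom rows $(c,d)$ of $\Gamma_\theta$), and your layer-cake reduction of the case $\alpha<0$ to the dual estimate $B(T)\ll T$ for $y\le T\le 1$ is correct. The genuine gap is that this dual estimate --- which you yourself single out as the main obstacle --- is never actually proved: you name two candidate methods and execute neither. The equidistribution route is shaky as stated, because you need the bound uniformly down to $T\asymp y$, where the indicator of $\bF\cap\{\im z\le T\}$ degenerates and Sarnak-type effective equidistribution of the closed horocycle at height $y$ does not obviously control the time spent in a set whose measure is comparable to the error term. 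The coset-enumeration route can be made to work --- it amounts to repeating your union-bound count at the cusp $1$, via the scaling matrix $\sigma_1$ and with the parity conditions tracked (note also that the image of the horocycle $\{\im\tau=y\}$ under $\gamma$ is the circle of diameter $1/(c^2y)$ tangent at $a/c=\gamma(\infty)$, not at $-d/c$) --- but the ``direct trigonometric computation'' you defer is exactly the missing content, so as submitted the proof of the negative case is incomplete.

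For comparison, the paper settles the case $\alpha<0$ in a few lines by an entirely different device: it proves the pointwise inequality $\mathbf{I}(\tau)\,\mathbf{I}(\tau+1)\ge 3/4$ (by passing to the standard fundamental domain of $\SL_2(\Z)$ and using $(\im z)^2\ge\tfrac34|z|^2$ there), which together with the $2$-periodicity of $\mathbf{I}$ reduces $\int_{-1}^{1}\mathbf{I}(x+iy)^{\alpha}dx$ for $\alpha\le 0$ directly to the known case $|\alpha|\ge0$ from \cite{BRS}. In your language this inequality instantly closes your gap: if $\mathbf{I}(x+iy)\le T$ then $\mathbf{I}(x+1+iy)\ge 3/(4T)$, hence $B(T)\le A(3/(4T))\ll T$ by the union bound you already established. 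So the statement $B(T)\ll T$ is true, and your scheme becomes a complete (if longer) proof once you either import this inequality or genuinely carry out the cusp-$1$ counting; without one of these, the key step is asserted rather than proved.
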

\begin{proof}
    We claim that
    \[\mathbf{I}(\tau)\mathbf{I}(\tau+1) \ge 3/4\,,\qquad \tau\in\HH\,,\]
    which reduces the case $\alpha\le0$ to the case $\alpha\ge 0$
    that is covered by \cite[Prop. 6.1]{BRS}.

    To see why the claim holds, let $\mathcal{F}_1:=\{z\colon |z|>1, |\re z|<1/2\}$ denote the standard fundamental domain for $\SL_2(\Z)$ and
    note that $\mathcal{F}'=\mathcal{F}_1\cup T\mathcal{F}_1\cup TS\mathcal{F}_1$
    is a fundamental domain for $\Gamma_{\theta}$ that is translation-equivalent
    to $\mathcal{F}$. Let $\tau_1=\gamma_{\tau}\tau$ and
    $\tau_2=\gamma_{\tau+1}(\tau+1)$ be the representatives of $\tau$ and $\tau+1$
    in $\mathcal{F}$. We need to show that $\im\tau_1\im\tau_2\ge 3/4$.
    By applying $T^2$ if necessary, we may assume that
    $\tau_1,\tau_2$ are in $\mathcal{F}'$.
    Note that for generic $\tau$, we have $\tau_1\ne \tau_2$.
    Since $\tau_1$ and $\tau_2$ are in $\SL_2(\Z)$-equivalent, we have
    $\tau_2=\gamma\tau_1$ with $\gamma$ in $\{T,TS,T^{-1},ST^{-1},TST^{-1}\}$.
    In all cases, the claim follows from the inequality
    $(\im z)^2\ge \frac{3}{4}|z|^2$ which holds for all~$z$ in $\mathcal{F}_1$.
\end{proof}
Our third lemma is \cite[Prop. 6.7]{BRS}.
\begin{lemma}\label{lem:Nupper} We have
    \[ \int_{-1}^1 \mathbf{N}(x+iy)=\frac{2}{\pi^{2}} \log^2 y +O(\log y), \quad y\to 0.\]
\end{lemma}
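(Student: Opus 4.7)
The plan is to estimate the integral by summing contributions from the tiles of the $\Gamma_\theta$-tesselation of $\HH$. Since $\bN$ is $\Gamma_\infty$-invariant and constant on the interior of each translate $\gamma\bF$ (taking value $\ell(\gamma) := j + \eps_0 + \eps_1$ there), we may decompose
\[
\int_{-1}^{1} \bN(x + iy)\,dx = \sum_{\gamma \in \Gamma_\infty \backslash \Gamma_\theta} \ell(\gamma) \cdot |I_\gamma(y)|,
\qquad
I_\gamma(y) := \{x \in (-1, 1) : x + iy \in \gamma\bF\}.
\]

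Next, I would parametrize the cosets by pairs $(c, d)$ given by the bottom row of a representative with $c > 0$, $\gcd(c, d) = 1$, and the appropriate parity conditions from $\Gamma_\theta$. Each tile $\gamma\bF$ is an ideal hyperbolic triangle with three cusps on $\R$ at $a/c$, $(a+b)/(c+d)$, and $(b-a)/(d-c)$, whose pairwise distances are $1/(c(c + d))$, $1/(c|c - d|)$, and $2/|c^2 - d^2|$. A direct geometric analysis determines $|I_\gamma(y)|$ as a function of $(c, d, y)$: the intersection is non-empty precisely when the tile reaches height $y$, which forces $c = O(1/\sqrt{y})$, and its length is then controlled via horoball estimates near each of the three cusps.

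The value $\ell(\gamma)$ equals the length of the $\Gamma_\theta$-adapted continued-fraction expansion of the cusp $a/c$---that is, the number of $S$-inversions in the Euclidean-type reduction of $\gamma$. On average over valid $d \bmod 2c$, this length grows like $\log c$ with a constant determined by the Gauss--Kuz'min statistics of the continued-fraction map of $\Gamma_\theta$. Summing $\ell(\gamma)|I_\gamma(y)|$ over cosets then reduces, via Abel summation together with multiplicative sums of the form $\sum_{c \le Y} \log c / c$ weighted by a density function counting admissible $d$, to the main term $\tfrac{2}{\pi^2}\log^2 y$; the explicit constant $\tfrac{2}{\pi^2}$ arises from combining the density $1/\zeta(2) = 6/\pi^2$ of coprime pairs with the arithmetic structure of $\Gamma_\theta$.

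The main obstacle will be controlling the $O(\log y)$ error: this requires sharp estimates both on the fluctuation $\ell(\gamma) - \log c$ across cosets (since the continued fraction length has non-negligible variance) and on $|I_\gamma(y)|$ in the transitional regime $c \sim 1/\sqrt{y}$, where the horoball approximation degrades. A secondary difficulty is the careful bookkeeping of arithmetic constants required to obtain the sharp leading coefficient $\tfrac{2}{\pi^2}$ rather than a comparable weaker bound.
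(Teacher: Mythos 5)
First, note that the paper does not prove this lemma at all: it is quoted verbatim as \cite[Prop.~6.7]{BRS}, so your proposal has to be judged as a free-standing argument rather than against an in-paper proof.

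As such it has a genuine gap at its central step, namely the claim that $\ell(\gamma)$ ``grows like $\log c$ with a constant determined by the Gauss--Kuz'min statistics of the continued-fraction map of $\Gamma_{\theta}$''. That is the statistics of the \emph{regular} continued fraction; the continued-fraction map adapted to $\Gamma_{\theta}$ (the even continued fraction) has an indifferent fixed point coming from the second cusp $1$, because $T^2S=(\smat{2}{-1}{1}{0})$ is parabolic fixing $1$. Consequently its natural invariant measure is infinite, word lengths are not $O(\log c)$ even on average --- for instance $(T^2S)^k$ has $\bN$-value $\asymp k$ while its entries are only of size $k$ --- and the mean length of the $\Gamma_\theta$-expansion over cusps of denominator at most $Q$ is of order $\log^2 Q$, not $\log Q$. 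Your bookkeeping is in fact internally inconsistent: since the cross-sections satisfy $\sum_{\gamma}|I_\gamma(y)|=2$, any bound of the shape $\ell(\gamma)\lesssim \log c\lesssim\log(1/y)$ on the contributing tiles would immediately give $\int_{-1}^{1}\bN(x+iy)\,dx=O(\log(1/y))$, contradicting the main term $\tfrac{2}{\pi^2}\log^2 y$ you are trying to produce; the appearance of $\sum_{c\le Y}\log c/c$ in your sketch cannot rescue this, because the tiles with a fixed $c$ do not carry measure $\asymp 1/c$ (that would make the total cross-section measure diverge). The actual source of the $\log^2$ main term is precisely the heavy tail you average away: for $x$ near an image $p/q$ of the cusp $1$ (with $p,q$ both odd and $q\lesssim y^{-1/2}$) the reduction winds about that cusp roughly $(q^{2}|x-p/q|)^{-1}$ times, and integrating this in $x$ and summing over such fractions yields $\asymp\sum_{q\le y^{-1/2}}q^{-1}\log(1/(q^2y))\asymp\log^2(1/y)$, with the constant $2/\pi^2$ coming from the density of odd--odd coprime pairs. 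A correct proof along your lines would have to isolate and compute this parabolic contribution (and show everything else is $O(\log y)$), which is a different mechanism from the Gauss--Kuz'min heuristic you invoke. A further, more minor, slip: a tile with bottom row $(c,d)$ reaches height $1/|c^2-d^2|$, so meeting the line $\im\tau=y$ forces $|c^2-d^2|\le 1/y$, which allows $c$ as large as about $1/y$ (when $|d|$ is close to $c$), not $c=O(y^{-1/2})$.
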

We are now ready to prove the desired bound for $f_n$. 

\begin{proof}[Proof of \eqref{eq:outsidepw}] In view of \cite[Sec. 7]{BRS}, it suffices to estimate $|f_n(x)| $ for $1\le x\le n$.
    Since
    \begin{equation*} \label{eq:fndef} 
    f_n(x)= \int_{-1+i/n}^{1+i/n} F_g(\tau)e^{-i\pi n \tau }d\tau,
    \end{equation*}
    where $g(z):=e^{i\pi xz}$ and $F_g(\tau)=F(\tau,x)$ (see~\eqref{eq:gfseries}), 
    we get that
    \begin{equation} \label{eq:Fg} | f_n (x)| \le  e^{\pi} \int_{-1}^1  |F_g(t+i/n)| \, dt. \end{equation}
    Since $|g(x+iy)| \le e^{-\pi x y} $, $g$ is a bounded function in $\HH$ and hence of moderate growth. This means that we may employ the machinery of \cite{BRS}. To this end, we find first that
    \begin{equation} \label{eq:Fgupper}  F_g(\tau) \ll x^{-1/2} \max(1,(\im\tau)^{-1}) , \quad \tau\in \mathcal{F} .\end{equation}
    Indeed, by making the same deformation of the path of integration as in the proof of \cite[Prop.~6.1]{BRS} and using Lemma~\ref{lem:ker}, we get
    \[  \left|\int_{J^{-1}(\ell)} K(\tau,z) g(z) dz\right|^2\le    \max(1,(\im \tau)^{-2}) \int_{J^{-1}(\ell)} |g(z)|^2 |dz|. \]
    This bound yields \eqref{eq:Fgupper} because
    \[ \int_{J^{-1}(\ell)} |g(z)|^2 |dz|  \ll x^{-1}.  \]

    To estimate the right-hand side of \eqref{eq:Fg}, we start from \cite[Eq. (6.3), p. 40]{BRS}: 
    \begin{equation} \label{eq:start} |F_g(\tau)| (\im \tau)^{1/4} \le |F_g(\tau_0)| (\im \tau_0)^{1/4}
        +|\psi(\tau_0)| (\im \tau_0)^{1/4} + \sum_{i=1}^{\mathbf N(\tau)} |\psi(\tau_i)| (\im \tau_i)^{1/4},\end{equation}
    where
    \[ \psi(z)\coloneqq g(z) +(z/i)^{-1/2}g(-1/z),\]
    the point $\tau_0$ lies in $\cup_{j\in\Z}(2j+\bF)$, and 
    \begin{equation} \label{eq:chain} \tau_j\coloneqq T^{2m_j}S\cdots T^{2m_1}S\tau_0, \quad \tau=S\tau_{\bN(\tau)}.\end{equation}
    In view of \eqref{eq:Fgupper} and Lemma~\ref{lem:Ib}, the first term on the right-hand side of \eqref{eq:start} yields a term of order $n^{1/4}x^{-1/2}$.
    To deal with the two remaining terms, we notice that we now have (writing $\eta_i:=\im \tau_i$)
    \[ |\psi(\tau_i)|  \ll e^{-\pi x \eta_i} + |\tau_i |^{-1/2} e^{-\pi x  \eta_{i+1}}  
         =
        e^{-\pi x  \eta_i} + \eta_i^{-1/4}\eta_{i+1}^{1/4} e^{- \pi x \eta_{i+1}} .\]
    This gives
    \begin{equation} \label{eq:etai}
         |\psi(\tau_0)| \eta_0^{1/4} \ll \eta_0^{1/4} e^{- \pi x \eta_0} + \eta_{1}^{1/4} e^{- \pi x \eta_{1}} \ll x^{-1/4} \end{equation}
    as well as
    \begin{equation}\label{eq:alt1} \sum_{i=1}^{\mathbf N(\tau)} |\psi(\tau_i)| \eta_i^{1/4} \le 2 \mathbf N(\tau) \max_{1\le i\le \mathbf N(\tau)+1} \eta_i^{1/4} e^{-\pi x \eta_{i}} \ll \mathbf N(\tau) x^{-1/4} .\end{equation}
    Using \eqref{eq:etai} and \eqref{eq:alt1} to estimate the two latter terms on the right-hand side of \eqref{eq:start} and also invoking Lemma~\ref{lem:Nupper}, we get the desired bound.
\end{proof}
\subsection{The integral of sums of squares by a Carleson measure argument}
\label{sec:sumsquares}
We will now prove the upper bound in \eqref{eq:sqrsum} of Theorem~\ref{thm:sqrsum}. The technique to be developed here is an elaboration of that used in the preceding section. To ease the exposition, we split the argument into three subsections.

\subsubsection{The space \texorpdfstring{$H^2(\HH)$}{H^2(H)}} We begin with some general facts about functions $f$ in $H^2(\HH)$ and the associated function $F_f$ which is defined as
    \begin{equation} \label{eq:defF}
    F_f(\tau) := \frac{1}{2} \int_{-1}^1 K(\tau,z) f(z)dz
    \end{equation}
for $\tau$ in the fundamental domain $\mathcal F$.
We recall that $f$ is in $H^2(\HH)$ if $f$ is analytic in $\HH$ and
    \[ \| f \|_{H^2(\HH)}^2\coloneqq \sup_{y>0} \int_{-\infty}^{\infty} |f(x+iy)|^2 dx <\infty. \]
A function $f$ in $H^2(\HH)$ has nontangential limits at almost every point of the real line. The corresponding limit function, which we also denote by $f$, has $L^2$ norm $\| f\|_{H^2(\HH)}$. By this correspondence, we may think of $H^{2}(\HH)$ as the subspace of $L^2(\mathbb R)$ consisting of functions whose Fourier transform vanishes for negative frequencies. See \cite[Ch. II]{Ga} for general information about $H^p$ spaces.

Functions in $H^2(\HH)$ are of moderate growth. More precisely, we have 
\begin{equation} \label{eq:pointbound} |f (u+iv)|\le  v^{-1/2} \|f\|_{H^2(\HH)} \end{equation}
which follows from the Cauchy formula
    \[ f(w)=\frac{1}{2\pi i} \int_{-\infty}^{\infty} \frac{f(x)}{x-w} dx.  \]
    The corresponding bound for $F_f$ is as follows.
\begin{lemma} \label{lem:CauchyF}
    There exists an absolute constant $C$ such that if $f$ is in $H^2(\HH)$ and $w$ is in~$\bF$, then
    \[ |F_f (w)| \le C \max(1, v^{-1}) \| f \|_{H^2(\HH)} .\]
\end{lemma}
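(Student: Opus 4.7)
The plan is to apply Cauchy--Schwarz in the integral representation~\eqref{eq:defF} of $F_{f}$ after first deforming the semicircular path of integration to the contour $J^{-1}(\ell)$, exactly as was done for $F_{g}$ in the proof of~\eqref{eq:outsidepw}. Since $f\in H^{2}(\HH)$ is analytic in $\HH$ and $K(\tau,\cdot)$ is analytic on the region swept out between the two paths, the deformation is justified as in~\cite[Prop.~6.1]{BRS}; the boundary values of $f$ at $\pm 1$ are handled by standard truncation together with the pointwise bound~\eqref{eq:pointbound}. Cauchy--Schwarz then yields
\[
|F_{f}(w)|^{2} \le \frac{1}{4}\bigg(\int_{J^{-1}(\ell)}|K(w,z)|^{2}|dz|\bigg)\bigg(\int_{J^{-1}(\ell)}|f(z)|^{2}|dz|\bigg),
\]
and Lemma~\ref{lem:ker} with $q=2$ bounds the first factor by $C\max(1,(\im w)^{-2})$. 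Taking square roots matches the right-hand side of the claim, so the proof reduces to the Carleson-type inequality
\[
\int_{J^{-1}(\ell)}|f(z)|^{2}|dz| \ll \|f\|_{H^{2}(\HH)}^{2}.
\]

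To establish this, I would verify that the arc-length measure $\mu$ on $J^{-1}(\ell)$ is a Carleson measure for $H^{2}(\HH)$ and invoke the Carleson embedding theorem (see~\cite[Ch.~II]{Ga}). Away from the endpoints $\pm 1$ the curve $J^{-1}(\ell)$ lies strictly above the real line and the Carleson condition is trivial. At the endpoints, the Fourier expansions~\eqref{eq:asymptexp} give $\im z \asymp 1/|\log|J(z)||$ as $z \to \pm 1$ along $J^{-1}(\ell)$, so parameterizing the curve near $z = 1$ by $t = |J(z)|$ yields $|dz| \asymp dt/(t\log^{2}(1/t))$; a direct calculation then shows $\mu(Q)\ll h$ for every Carleson box $Q$ of height $h$ centered at $\pm 1$.

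The principal technical point is this endpoint Carleson verification: even though the density $|dz|$ blows up as $z \to \pm 1$ along $J^{-1}(\ell)$, this is compensated by the fact that the curve approaches the real line only at the logarithmically slow rate $1/|\log t|$, so the total arc length in a Carleson box of height $h$ is still $O(h)$. Everything else is a direct adaptation of the bound~\eqref{eq:Fgupper} obtained in the proof of Theorem~\ref{thm:positiverealsmall}.
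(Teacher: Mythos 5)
Your proposal is correct and takes essentially the same route as the paper: Cauchy--Schwarz applied to the representation of $F_f$ along the deformed contour $J^{-1}(\ell)$, Lemma~\ref{lem:ker} with $q=2$ to bound the kernel factor by $C\max(1,v^{-2})$, and the Carleson embedding theorem applied to arc-length measure on the contour to bound $\int_{J^{-1}(\ell)}|f|^2\,|dz|$ by $\|f\|_{H^2(\HH)}^2$. The only difference is that you verify the Carleson condition near the endpoints $\pm1$ explicitly (and your computation $\im z\asymp 1/|\log|J(z)||$, $|dz|\asymp dt/(t\log^2(1/t))$, hence $\mu(Q)\ll h$, is correct), whereas the paper simply asserts this as clear.
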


\begin{proof}
    By the definition \eqref{eq:defF} and Lemma~\ref{lem:ker}, we have
    \[ |F_f(u+iv)|^2 \le C \max(1,v^{-2}) \int_{\ell} |f(z)|^2 |dz| . \]
    It is clear that arclength measure on $\ell$ constitutes a Carleson measure (see \cite[p. 30]{Ga}), so we get
    \[ |F_f(u+iv)|^2 \le C' \max(1,v^{-2}) \|f\|_{H^2(\HH)}^2  \]
    by Carleson's embedding theorem \cite[p. 61]{Ga}. 
\end{proof}

\subsubsection{Estimation by means of a Carleson measure condition} We fix a number $\tau=t+i/\xi$, where $\xi\ge 10$.  The chain $\tau_0, \ldots, \tau_{\mathbf{N}(\tau)}$ is as before, and our goal is to estimate 
\begin{equation} \label{eq:start2}
    |F_f(\tau)| (\im \tau)^{1/4} \le |F_f(\tau_{0})| (\im \tau_{0})^{1/4}
    + \mathcal{C}_f(\tau),\end{equation}
    where $\mathcal{C}_f(\tau)$ will be the modulus of a sum similar to that on the right-hand side of \eqref{eq:start}. 
We denote by $\mathbf C (\tau)$ the smallest constant $C$ such that
\begin{equation} \label{eq:csum}
    \mathcal{C}_f(\tau) \le C \| f \|_{H^2(\HH)}
\end{equation}
holds for all $f$ in $H^2(\HH)$. Our key estimate is contained in the following lemma.
\begin{lemma} \label{lem:Carl} We have
    \begin{equation} \label{eq:Carleson} \int_{-1}^{1} \mathbf{C}^2 (t+i/\xi) dt
        \ll  \xi^{1/2} \log^2 \xi .\end{equation}
\end{lemma}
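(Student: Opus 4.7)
The strategy is to bound $\mathbf{C}^2(\tau)$ pointwise by a geometric sum $S(\tau)$ via Cauchy--Schwarz combined with a Carleson measure estimate for $H^2(\HH)$, and then to integrate $S$ over $t\in[-1,1]$ using Lemma~\ref{lem:Nupper}.

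Since $|\tau_i|^{-1/2}(\im\tau_i)^{1/4}=(\im(-1/\tau_i))^{1/4}$ and $|\psi_f(w)|\le|f(w)|+|w|^{-1/2}|f(-1/w)|$, the definition of $\mathcal{C}_f$ gives
\[ \mathcal{C}_f(\tau)\le \sum_{w\in W(\tau)}(\im w)^{1/4}|f(w)|,\qquad W(\tau):=\{\tau_i\}_{i=0}^{\mathbf{N}(\tau)}\cup\{-1/\tau_i\}_{i=0}^{\mathbf{N}(\tau)}. \]
Using the chain identities $-1/\tau_i=\tau_{i+1}-2m_{i+1}$ for $i<\mathbf{N}(\tau)$ and $-1/\tau_{\mathbf{N}(\tau)}=\tau$, the multiset of imaginary parts of points in $W(\tau)$ equals $\{y_0\}\cup\{y_i,y_i\}_{i=1}^{\mathbf{N}(\tau)}\cup\{1/\xi\}$ with $y_i:=\im\tau_i$. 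Cauchy--Schwarz then yields
\[ \mathcal{C}_f(\tau)^2\le S(\tau)\cdot T_f(\tau),\quad S(\tau):=\sum_{w\in W(\tau)}(\im w)^{-1/2},\ T_f(\tau):=\sum_{w\in W(\tau)}\im w\cdot|f(w)|^2. \]

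The factor $T_f(\tau)$ is controlled via the Carleson embedding theorem applied to the measure $\mu_\tau:=\sum_{w\in W(\tau)}\im w\cdot\delta_w$. The key claim is that $\mu_\tau$ is uniformly a Carleson measure on $H^2(\HH)$: indeed, the points of $W(\tau)$ are hyperbolically separated with a universal constant, because each $\tau_i$ is a $\Gamma_\theta$-image of a point in $\bF$ and the canonical reduction produces only bounded local distortions. Granting this, $T_f(\tau)\ll\|f\|_{H^2}^2$, and taking the supremum over $\|f\|_{H^2}\le 1$ gives $\mathbf{C}^2(\tau)\ll S(\tau)$.

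The essential geometric observation is that in the canonical reduction heights decrease monotonically along the chain. Since the reduction is of minimal length, we have $|\tau_i|\ge 1$ for $0\le i<\mathbf{N}(\tau)$, hence $y_{i+1}=y_i/|\tau_i|^2\le y_i$ and consequently $y_i\ge y_{\mathbf{N}(\tau)}=(1/\xi)/|\tau|^2\ge(2\xi)^{-1}$ for $|t|\le 1$. Therefore $y_i^{-1/2}\ll\xi^{1/2}$ uniformly in $i$, which gives
\[ S(\tau)\ll (\mathbf{N}(\tau)+1)\,\xi^{1/2}. \]
Integrating in $t$ and invoking Lemma~\ref{lem:Nupper},
\[ \int_{-1}^{1}\mathbf{C}^2(t+i/\xi)\,dt\;\ll\;\xi^{1/2}\int_{-1}^{1}\big(\mathbf{N}(t+i/\xi)+1\big)\,dt\;\ll\;\xi^{1/2}\log^2\xi, \]
as claimed.

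\textbf{Main obstacle.} The principal technical step is verifying the uniform $L^1$-Carleson property of $\mu_\tau$; this reduces to a combinatorial--geometric statement about the hyperbolic separation of the set $W(\tau)$, which in turn requires careful bookkeeping of the canonical $\Gamma_\theta$-reduction. A secondary but easier point is the monotonicity $y_i\ge y_{\mathbf{N}(\tau)}$, which follows from the minimality of the canonical word representing $\gamma_\tau$.
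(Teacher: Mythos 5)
There is a genuine gap, and it is fatal to the approach rather than a fixable detail. Your key claim that the points of $W(\tau)$ are hyperbolically separated with a universal constant (so that $\mu_\tau=\sum_{w}\im w\,\delta_w$ is a Carleson measure with uniformly bounded constant) is false. When the reduction chain of $\tau=t+i/\xi$ passes near the cusp $1$, an initial segment $\tau_0,\dots,\tau_{n_0}$ of the chain consists of consecutive images of one point under the parabolic stabilizer of the cusp, so these points all lie on a single horocycle tangent to $\R$ at $1$ (this is exactly the configuration isolated in the paper's Lemma~\ref{lem:cancel}); their consecutive hyperbolic gaps are $\asymp \mathbf{I}(\tau)$, which is not bounded below (compare Lemma~\ref{lem:Ib}, where $\int_{-1}^1\mathbf{I}^{-1}$ genuinely grows like $\log\xi$). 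Concretely, if the horocycle has diameter $d$, then about $1/d$ chain points sit at height $\asymp d$ inside a box of side $\asymp d$, so the Carleson constant of $\mu_\tau$ is $\gtrsim 1/d\asymp \mathbf{I}(\tau)^{-1}$, unbounded. Being in one $\Gamma_\theta$-orbit of a point of $\bF$ does not force separation, precisely because the orbit clusters along horocycles at the cusp.

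Worse, the damage is done already at your first step: replacing the signed sum defining $\mathcal{C}_f(\tau)$ (the coefficients $\lambda_i$ in the paper, which alternate along the horocycle segment) by the termwise absolute-value sum $\sum_{w\in W(\tau)}(\im w)^{1/4}|f(w)|$ destroys cancellation that is indispensable. Testing this majorant against the normalized reproducing kernel $f(z)=d^{1/2}/(z-(x_0-id))$ placed at the top of the horocycle (with $x_0$ the cusp point) gives, from the $\asymp 1/d$ points at height $\asymp d$, a contribution $\gtrsim d^{-1}\cdot d^{1/4}\cdot d^{-1/2}=d^{-5/4}$, so the best constant for your absolute-value version is $\gtrsim d(\tau)^{-5/4}$. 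For $t$ at distance $\delta$ from a cusp-$1$ point (say $t=1-\delta$, $1/\xi\le\delta\le\xi^{-1/2}$) one has $d\asymp\delta^2\xi$, and integrating $d^{-5/2}$ in $\delta$ already produces a contribution of order $\xi^{3/2}$ to $\int_{-1}^1(\cdot)^2\,dt$, far above the target $\xi^{1/2}\log^2\xi$. Hence no correct treatment of the Carleson step can rescue the argument: the signs must be kept. This is exactly why the paper splits the chain at $n_0$, extracts the cancellation along the horocycle by partial summation (Lemma~\ref{lem:cancel}), and only then applies an embedding theorem --- and it uses Luecking's $L^1$-Carleson result (Lemma~\ref{lem:carllueck}), whose constant $\int(\int_{\Gamma_\alpha(u)}y^{-1}d\mu)^2du$ is then integrated in $t$ with the help of the counting bound \eqref{eq:goal}, rather than a pointwise-uniform classical Carleson embedding. (Your remaining steps --- the bound $\im w\ge(2\xi)^{-1}$ along the chain and the final integration via Lemma~\ref{lem:Nupper} --- are fine, but they only control the factor $S(\tau)$, which is not where the difficulty lies.)
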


We prepare for the proof of this lemma by identifying a suitable upper bound for $\mathcal{C}_f(\tau)$. We begin by defining $n_0$ as the largest $\ell$ such that either
    \[ |\tau_\ell-1|^2 \le \eta_\ell \quad \text{or} \quad |\tau_\ell+1|^2 \le \eta_\ell.\]
Without loss of generality, we assume that $|\tau_{n_0}-1|^2\le \eta_0$. Then all
the points $\tau_0, \ldots, \tau_{n_0}$ lie on the circle 
\[ D(\tau_{n_0})\coloneqq \Big\{z:\ \Big|z-1-i\frac{|\tau_{n_0}-1|^2}{2\eta_{n_0}}\Big|=\frac{|\tau_{n_0}-1|^2}{2\eta_{n_0}}\Big\} ,\]
and hence
\begin{equation} \label{eq:Iestimate}
     \frac{|\tau_{n_0}-1|^2}{2\eta_{n_0}} \le \mathbf{I}(\tau) \le \frac{|\tau_{n_0}-1|^2}{\eta_{n_0}}. \end{equation}
We may express $C_f$ as
\[ C_f(\tau)=\Big|\sum_{i=0}^{\mathbf N(\tau)} \lambda_i \psi(\tau_i) (\im \tau_i)^{1/4}\Big|,\]
where $\lambda_i$ are unimodular numbers and
\begin{equation} \label{eq:psif} \psi(z)\coloneqq f(z)+(z/i)^{-1/2} f(-1/z). \end{equation} 
We may compute $\lambda_i$ in the range $0\le i\le n_0$. Indeed, we find then that
\[ \mathcal{C}_f(\tau)\le \mathcal{C}_{f,1}(\tau)+\mathcal{C}_{f,2}(\tau), \]
where
\begin{equation}\label{eq:Cf1}  \mathcal{C}_{f,1}(\tau)\coloneqq \eta_{n_0}^{1/4}\, \Big|\sum_{j=0}^{n_0}
\psi(\tau_{n_0-j})(-1)^j\Big(\frac{j(\tau_{n_{0}}-1)+1}{i}\Big)^{-1/2}\Big| , \ \tau_{n_0-j}=\frac{(j+1)\tau_{n_0}-j}{j(\tau_{n_0}-1)+1},\end{equation}
and
\[\mathcal{C}_{f,2}(\tau)\coloneqq \sum_{j=n_0+1}^{\mathbf{N}(\tau)} |\psi(\tau_j)| (\im \tau_j)^{1/4}. \]
We decompose $\mathcal{C}_f(\tau)$ in this way to take advantage of the cancellations in the sum on the right-hand side of \eqref{eq:Cf1}. To this end, we have the following result.

\begin{lemma} \label{lem:cancel} There exist indices $0=j_1<\cdots < j_k=n_0$ along with points $\kappa_{j_{1}},\ldots, \kappa_{j_{k}}$ such that 
\begin{align} \label{eq:separate} |\tau_{j_{\ell}}-\kappa_{j_\ell}|& \le \eta_{j_\ell}/2, \quad
\#\{\kappa_{j_i}: \ |\kappa_{j_i}-\kappa_{j_\ell}|\le \im \kappa_{j_{\ell}}\}  \ll 1, \quad 1\le \ell \le k,  \\ \label{eq:sumkappa}
\mathcal{C}_{f,1}(\tau) & \ll \sum_{\ell=1}^k |\psi(\kappa_{j_\ell})| (\im \kappa_{j_\ell})^{1/4}\big(1+\mathbf{I}(\tau)^{-1/2}(\im{\kappa_{j_\ell}})^{1/2}\big).
\end{align} 
\end{lemma}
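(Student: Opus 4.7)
The plan is to partition the index set $\{0,1,\ldots,n_0\}$ into blocks according to the natural analytic scale of the points $\tau_{n_0-j}$, extract one representative $\kappa_{j_\ell}$ per block, and exploit the alternating signs $(-1)^j$ in~\eqref{eq:Cf1} by Abel summation inside every block. First I would record the geometry: the Möbius formula for $\tau_{n_0-j}$ places these points on the circle $D(\tau_{n_0})$ through $\tau_{n_0}$ and $1$, their imaginary parts are $\eta_{n_0-j}=\eta_{n_0}/|j(\tau_{n_0}-1)+1|^2$, and (by differentiating the Möbius map) consecutive points satisfy
\[
|\tau_{n_0-j}-\tau_{n_0-j-1}| \asymp \frac{|\tau_{n_0}-1|^2}{\eta_{n_0}}\,\eta_{n_0-j} \asymp \mathbf{I}(\tau)\,\eta_{n_0-j},
\]
where the last $\asymp$ uses~\eqref{eq:Iestimate}.

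Then I would construct the $\kappa_{j_\ell}$'s greedily: take $j_1=0$ and $\kappa_{j_1}:=\tau_{n_0}$, and recursively let $j_{\ell+1}$ be the smallest index past $j_\ell$ with $|\tau_{n_0-j_{\ell+1}}-\kappa_{j_\ell}|>\eta_{j_\ell}/2$, setting $\kappa_{j_{\ell+1}}:=\tau_{n_0-j_{\ell+1}}$. The first inequality in~\eqref{eq:separate} is automatic; the bounded-overlap inequality is a packing argument on $D(\tau_{n_0})$, because two $\kappa$'s at comparable scale $\eta$ within Euclidean distance $\eta$ of one another must occupy disjoint arcs of length $\asymp\eta$ on a circle of bounded length. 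To prove~\eqref{eq:sumkappa} I would then estimate each block $B_\ell:=\{j_\ell,\ldots,j_{\ell+1}-1\}$, whose length is $|B_\ell|\asymp 1/\mathbf{I}(\tau)$ by the spacing estimate. Setting $A_j:=\psi(\tau_{n_0-j})\bigl((j(\tau_{n_0}-1)+1)/i\bigr)^{-1/2}$, Abel summation on $\sum_{j\in B_\ell}(-1)^j A_j$ leaves a boundary term of order $|A_{j_\ell}|\asymp|\psi(\kappa_{j_\ell})|(\eta_{j_\ell}/\eta_{n_0})^{1/4}$ plus $|B_\ell|-1$ differences $|A_j-A_{j+1}|$. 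Splitting $\psi$ as in~\eqref{eq:psif}, applying the Cauchy estimate to each summand on a disc of radius proportional to $\im z$ (noting that $|z|\asymp 1$ on $D(\tau_{n_0})$, so $\im(-1/z)\asymp\im z$), and differentiating the coefficient should give
\[
|A_j-A_{j+1}| \ll \bigl(\mathbf{I}(\tau)+\sqrt{\mathbf{I}(\tau)\,\eta_{n_0-j}}\bigr)(\eta_{n_0-j}/\eta_{n_0})^{1/4}|\psi(\kappa_{j_\ell})|.
\]
Summing over $B_\ell$ and multiplying by the prefactor $\eta_{n_0}^{1/4}$ from~\eqref{eq:Cf1} produces $|\psi(\kappa_{j_\ell})|\eta_{j_\ell}^{1/4}(1+\mathbf{I}(\tau)^{-1/2}\eta_{j_\ell}^{1/2})$ per block, and~\eqref{eq:sumkappa} follows upon summing in $\ell$.

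The main obstacle will be the sharp Cauchy-type bound on $|A_j-A_{j+1}|$: the piece $(z/i)^{-1/2}f(-1/z)$ of $\psi$ has its natural analytic scale at $\im(-1/z)$ rather than at $\im z$, so without the observation $|z|\asymp 1$ on $D(\tau_{n_0})$ there would be an asymmetry breaking the argument. Tracking implicit constants through the decomposition of $\psi$, and distinguishing the regime $\im\kappa_{j_\ell}\ll\mathbf{I}(\tau)$ (where essentially no cancellation is available and the $1$ dominates the factor $1+\mathbf{I}(\tau)^{-1/2}\eta_{j_\ell}^{1/2}$) from the regime $\im\kappa_{j_\ell}\gg\mathbf{I}(\tau)$ (where the square-root cancellation appears), will be the delicate technical points.
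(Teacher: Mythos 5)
Your block decomposition, the spacing estimate $|\tau_{n_0-j}-\tau_{n_0-(j+1)}|\asymp \mathbf{I}(\tau)\,\eta_{n_0-j}$, the resulting block length $\ll \mathbf{I}(\tau)^{-1}$, and the use of Abel summation against the signs $(-1)^j$ are all in line with the paper's argument, and your bookkeeping of the factors $(\eta_{n_0-j}/\eta_{n_0})^{1/4}$ and $\eta_{n_0}^{1/4}$ does reproduce the shape of \eqref{eq:sumkappa}. The genuine gap is in your choice $\kappa_{j_\ell}:=\tau_{n_0-j_\ell}$ together with the claimed bound $|A_j-A_{j+1}|\ll(\cdots)\,|\psi(\kappa_{j_\ell})|$. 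A Cauchy estimate for $\psi'$ at a block point $z$ only gives $|\psi'(z)|\le r^{-1}\sup_{|w-z|\le r}|\psi(w)|$; there is no way to dominate that supremum by the value of $|\psi|$ at the single block-start point, and in fact the statement you are trying to prove can fail with your choice of $\kappa$'s: since $f\in H^2(\HH)$ is arbitrary, $\psi(z)=f(z)+(z/i)^{-1/2}f(-1/z)$ can be made to vanish at all the selected orbit points $\tau_{n_0-j_\ell}$ while oscillating in step with $(-1)^j$ at the intermediate block points, making the left-hand side of \eqref{eq:sumkappa} large and your right-hand side zero. This is exactly why the lemma is stated with auxiliary points $\kappa_{j_\ell}$ that are merely \emph{close} to $\tau_{j_\ell}$ rather than equal to them.

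The paper's resolution is to keep the two summations by parts (first on the coefficient sum $\sum_j(-1)^j(j(\tau_{n_0}-1)+1)^{-1/2}$, giving \eqref{eq:carlbox}, then on the block), which leaves a term $|\psi(\tau_{n_0-j_\ell})|+\int|\psi'(z)|\,|dz|$ along the segment joining the block endpoints, and then to \emph{define} $\kappa_{j_\ell}$ as the point maximizing $|\psi|$ over the set of points within distance $\eta_{n_0-j_\ell}/4$ of that segment. With that definition the Cauchy estimate on discs of radius $\eta_{n_0-j_\ell}/4$ closes the argument, and the closeness and bounded-overlap properties \eqref{eq:separate} still hold because the segment has length $\ll\eta_{j_\ell}$. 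So your proof becomes correct if you replace your $\kappa_{j_\ell}$ by this maximizer (and weaken $|B_\ell|\asymp\mathbf{I}(\tau)^{-1}$ to $\ll$, which is all you use); as written, the key inequality is unjustified and indeed false.
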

\begin{proof}
We set $j_0=0$ and define the numbers $j_\ell$ inductively by 
requiring 
\[ |\tau_{j_\ell}-\tau_{j_{\ell+1}}| \le \eta_{j_\ell}/4 < |\tau_{j_\ell}-\tau_{j_{\ell+1}+1}| \] 
as long as these inequalities hold for some $j_{\ell+1}<n_0$, and otherwise we terminate the process by setting $j_{\ell+1}=n_0$. A computation shows that
\[ |\tau_{n_0-j}-\tau_{n_0-(j+1)}|\asymp \frac{|\tau_{n_0}-1|^2}{\eta_{n_0}} \,\eta_{n_0-j},\]
whence 
\begin{equation} \label{eq:length} j_{\ell+1}-j_{\ell} \ll \mathbf{I}(\tau)^{-1}\end{equation}
by \eqref{eq:Iestimate}. 

To estimate $\mathcal{C}_{f,1}$, we write
\[ \mathcal{C}_{f,1}\le \sum_{\ell=1}^k \sigma_\ell,\]
where 
\begin{equation} \label{eq:sigmai}  \sigma_\ell\coloneqq \eta_{n_0}^{1/4}\, \Big|\sum_{j=j_\ell}^{j_{\ell+1}-1}
\psi(\tau_{n_0-j})(-1)^j\Big(\frac{j(\tau_{n_{0}}-1)+1}{i}\Big)^{-1/2}\Big|.\end{equation}
We will estimate $\sigma_\ell$ by partial summation and Cauchy estimates. To begin with, setting $H(x)\coloneqq \sum_{M\le j\le x} (-1)^j$, we obtain by partial summation
\begin{align*} \sum_{j=M}^{M+N}(-1)^j(j(\tau_{n_{0}}-1)+1)^{-1/2}
=& H(M+N)((M+N)(\tau_{n_0}-1)+1)^{-\frac12} \\ &-\frac12(\tau_{n_0}-1)\int_M^{M+N} H(x)(x(\tau_{n_0}-1)+1)^{-\frac32}dx.  \end{align*}
If $M=j_\ell$ and $M+N\le j_{\ell+1}-1$, then using \eqref{eq:length} and the bound  $|H(x)|\le 2$, we get 
\begin{equation} \label{eq:carlbox} \eta_{n_0}^{1/4} \sum_{j=j_\ell}^{j_\ell+N}(-1)^j(j(\tau_{n_{0}}-1)+1)^{-1/2}
\ll \eta_{n_0-j_\ell}^{1/4}+\mathbf{I}(\tau)^{-1/2} \eta_{n_0-j_\ell}^{3/4}.\end{equation}
By summation by parts and \eqref{eq:carlbox}, 
\[ \sigma_{\ell} \ll \big(\mathbf{I}(\tau)^{-1/2} \eta_{n_0-j_\ell}^{3/4}+\eta_{n_0-j_\ell}^{1/4}\big) \Big( |\psi(\tau_{n_0-j_\ell})|+\int_{\tau_{n_0-j_{\ell}}}^{\tau_{n_0-j_{\ell+1}+1}}
|\psi'(z)| |dz|\Big),\]
where we on the right-hand side integrate along the line segment between the two points. Recalling \eqref{eq:psif} and \eqref{eq:pointbound}, we obtain the desired result by making a Cauchy estimate for a circle of radius $\eta_{n_0-j_\ell}/4$ about each $z$ in that line segment. Specifically, we obtain
$\kappa_{j_\ell}$ as the $w$ at which $|\psi(w)|$ takes its maximum in the set of points at distance at most $\eta_{n_0-j_\ell}/4$ from the segment between $\tau_{n_0-j_\ell}$ and $\tau_{n_0-j_{\ell+1}+1}$. 
\end{proof}    

We now introduce the notation
    \[ \Gamma_{\alpha}(u):=\{x+iy \in \HH : |x-u|<\alpha y\}, \]
    where $\alpha>0$. We require the following description of $L^1$ Carleson measures for $H^2(\HH)$.
\begin{lemma}\label{lem:carllueck}  Let $\mu$ be a nonnegative
    measure on $\HH$. Then
    \begin{equation} \label{eq:carllueck} \left( \int_{\HH} |G(z)| d\mu(z)\right)^2 \ll_{\alpha}
    \int_{-\infty}^{\infty} \left(\int_{\Gamma_{\alpha}(u)} y^{-1} d\mu(x+iy)\right)^2 du
    \  \| G \|_{H^2(\HH)}^2. \end{equation} \end{lemma}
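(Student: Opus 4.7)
The plan is to reduce the statement to the classical $L^2$-boundedness of the nontangential maximal function on $H^2(\HH)$. For a function $G$ in $H^2(\HH)$, define
    \[G_{\alpha}^{*}(u) := \sup_{z\in\Gamma_{\alpha}(u)} |G(z)|.\]
A standard result (see e.g. \cite[Ch.~II]{Ga}) gives $\|G_{\alpha}^{*}\|_{L^{2}(\R)} \ll_{\alpha} \|G\|_{H^{2}(\HH)}$.

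The main identity is a Fubini rearrangement based on the observation that for $z=x+iy$ in $\HH$, the set of $u$ in $\R$ with $z\in\Gamma_{\alpha}(u)$ is exactly the interval $\{u:|u-x|<\alpha y\}$, which has length $2\alpha y$. Consequently,
    \[\int_{\HH}|G(z)|\,d\mu(z) = \frac{1}{2\alpha}\int_{\HH}|G(z)|\,y^{-1}\int_{\R}\mathbf{1}_{\Gamma_{\alpha}(u)}(z)\,du\,d\mu(z) = \frac{1}{2\alpha}\int_{\R}\int_{\Gamma_{\alpha}(u)}|G(z)|\,y^{-1}\,d\mu(z)\,du.\]
For $z$ in $\Gamma_{\alpha}(u)$, we have $|G(z)|\le G_{\alpha}^{*}(u)$, so the inner integral is at most $G_{\alpha}^{*}(u)\int_{\Gamma_{\alpha}(u)}y^{-1}\,d\mu(x+iy)$.

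Applying the Cauchy--Schwarz inequality in the $u$-integral then gives
    \[\int_{\HH}|G(z)|\,d\mu(z) \ll_{\alpha} \|G_{\alpha}^{*}\|_{L^{2}(\R)}\left(\int_{\R}\Big(\int_{\Gamma_{\alpha}(u)}y^{-1}\,d\mu(x+iy)\Big)^{2}du\right)^{1/2},\]
and squaring together with the maximal function bound yields \eqref{eq:carllueck}.

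There is essentially no obstacle: the argument is a textbook reduction to the nontangential maximal function, and the only non-elementary ingredient is its $L^{2}$ boundedness, which is classical. The only care needed is to make sure the Fubini step is legitimate, which is immediate since $\mu$ is a nonnegative measure and the integrand is nonnegative.
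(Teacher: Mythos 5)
Your proof is correct and follows exactly the route the paper indicates: the paper does not write out a proof but cites Luecking's Theorem C and notes that the inequality is an immediate consequence of the $L^2$ boundedness of the nontangential maximal function, which is precisely your Fubini--plus--Cauchy--Schwarz reduction. Nothing further is needed.
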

This result can be found in \cite[Thm. C]{Lu}. As shown there, the inequality is an immediate consequence of Hardy and Littlewood's theorem on the $L^p$ boundedness of the nontangential maximal function \cite[p. 55]{Ga}. A more subtle result, also proved in \cite{Lu}, is that the order of magnitude of the constant
    \[ \int_{-\infty}^{\infty} \left(\int_{\Gamma_{\alpha}(u)} y^{-1} d\mu(x+iy)\right)^2 du \]
    on the right-hand side of \eqref{eq:carllueck} is optimal.
    
  \begin{proof}[Proof of Lemma~\ref{lem:Carl}] We let $\mathbf{C}_i(\tau)$, $i=1,2$, be the smallest constant $C$ such that 
  \[ \mathcal{C}_{f,i}(\tau)\le C \| f \|_{H^2(\HH)} \]
  holds for all $f$ in $H^2(\HH)$. It clearly suffices to prove that 
  \begin{equation} \label{eq:C12} \int_{-1}^{1} \mathbf{C}_i^2 (t+i/\xi) dt
        \ll  \xi^{1/2} \log^2 \xi , \quad i=1,2.\end{equation}
 The proof in either case is essentially the same. 
 
To prove \eqref{eq:C12} when $i=2$, we set
    \[ \mu=\sum_{j=n_0+1}^{\mathbf N(\tau)-1} \Big(
    \eta_j^{1/4} \delta_{\tau_j}+ \eta_{j+1}^{1/4}\delta_{S\tau_j} \Big) \] and $\alpha=1/4$. By Lemma~\ref{lem:carllueck}, we get
    \begin{align} \nonumber \int_{-1}^1 \mathbf C_{2}^2 (t+i/\xi) dt & \ll \int_{-1}^1
        \int_{-\infty}^\infty \Big(\sum_{\tau_i\in \Gamma_{1/4} (u)}
        \eta_i^{-3/4}+\sum_{S\tau_i\in \Gamma_{1/4}(u)} \eta_{i+1}^{-3/4}\Big)^2 du\,  dt \\
        & \label{eq:double} \le 2 \int_{-1}^1 \int_{-\infty}^\infty \Big( \sum_{\tau_i\in
            \Gamma_{1/4}(u)} \eta_i^{-3/4}\Big)^2du\,  dt+2\int_{-1}^1
        \int_{-\infty}^\infty\Big(\sum_{S\tau_i\in \Gamma_{1/4}(u)} \eta_{i+1}^{-3/4}\Big)^2
        du\,  dt. \end{align}
    We are thus left with the problem of estimating the two double integrals in
    \eqref{eq:double}. Both can be dealt with by the same argument, and we therefore give
    the details only for the first of them. Opening up the square, we find that
    \begin{align*} \int_{-1}^1 \int_{-\infty}^\infty \Big( \sum_{\tau_i\in \Gamma_{1/4}
            (u)} \eta_i^{-3/4}\Big)^2du\,  dt & \ll  \int_{-1}^1 \sum_{i=n_0+1}^{\mathbf N(t+i/\xi)}
        \eta_i^{-3/4} \sum_{j\ge i,\, |\re(\tau_i-\tau_j)|<\eta_i/2} \eta_j^{1/4}
        dt  .\end{align*} If we are able to prove that
    \begin{equation} \label{eq:goal}
        \sum_{j\ge i,\, |\re(\tau_i-\tau_j)|<\eta_i/2} \eta_j^{1/4}
        \ll \eta_i^{1/4} ,
    \end{equation}
    then the desired bound follows, because we get
    \[ \int_{-1}^1 \int_{-\infty}^\infty \Big( \sum_{\tau_i\in \Gamma_{1/4}(u)}
    \eta_i^{-3/4}\Big)^2du\,  dt  \ll  \int_{-1}^1 \sum_{i=0}^{\mathbf N(t+i/\xi)}
    \eta_i^{-1/2}  dt
    \ll  \xi^{1/2}  \int_{-1}^1 \mathbf N(t+i/\xi) dt\]
    and we may conclude by an application of Lemma~\ref{lem:Nupper}.

    It remains to prove \eqref{eq:goal}. 
    We set
    \[ Q_i:=\{u+iv: \ |u-\operatorname{Re} \tau_i |\le \eta_i/2, \ 0<v\le \eta_i \} \]
    and assume that $\tau_i$ lies in the rectangle $\{u+iv: \ 1<u<4/3, \ 0<v<1/9\},$ 
    where the bound $0<v<1/9$ comes from the requirement that $(\xi_i-1)^2>\eta_i-\eta_i^2$. It
    suffices to consider this situation, because the problem is trivial if $\tau_i$ lies to the right of this rectangle, and we may argue in exactly the same way if $\xi_i<-1$. 

    Assume that $\tau_j, \tau_{j+1},\ldots, \tau_{j+k}$ are all in $Q_i$ but that $\tau_{j+k+1}$ is not. We intend to prove that this implies that $k\le 5$ and also that
    $\eta_{\ell}<9\eta_{j+k}/25$ if $\ell>j+k+1$ and $\tau_{\ell}$ is in $Q_i$. It is clear that this will yield \eqref{eq:goal}.

    Our assumption about $\tau_j, \tau_{j+1},\ldots, \tau_{j+k}$ implies that
    \[ \tau_{j+\nu+1} = T^2 S \tau_{j+\nu}, \quad 0\le \nu\le k-1. \]
    In general, for a point $w=u+iv$ in $\HH$, we have
    \begin{equation} \label{eq:small} u- \operatorname{Re} (T^2 S
        w)=u-2+\frac{u}{|w|^2}=\frac{u(1-u)^2+uv^2-2v^2}{u^2+v^2} .
        \end{equation}
        If $w$ is in $Q_i$, then we get 
        \[ u- \operatorname{Re} (T^2 S
        w)\ge \frac{9}{16}\big((1-\re\tau_i)^2-\eta_i(\re\tau_i-1)-5\eta_i^2/4\big). \]
    Also taking into account that $(1-\re\tau_i)^2>\eta_i-\eta_i^2$, we infer from this bound that
    \[ u- \operatorname{Re} (T^2 S
        w)>\frac{9}{16}\big((2-\re\tau_i) \eta_i -5\eta_i^2/2\big)\ge \frac{7}{32}\eta_i. \]
    This implies that $k\le 5$.
    
    Finally, it is clear that if $\tau_{\ell}$ is in $Q_i$ for some $\ell>j+k+1$, then there must exist an $\ell'$ between $j+k$ and $\ell$ such that
    $|\operatorname{Re}\tau_{\ell'}|\ge 5/3$, and this implies that $\eta_{\ell}<9\eta_{j+k}/25$.

We now turn to the proof of \eqref{eq:C12} for $i=1$. Using \eqref{eq:separate} of Lemma~\ref{lem:cancel}, we find that
\[ \textbf{I}(\tau)^{-1/2} \sum_{i=1}^k |\psi(\kappa_{j_i})| (\im{\kappa_{j_i}})^{3/4}\ll \mathbf{I}(\tau)^{-1/2} \xi^{1/4} \| f\|_{H^2(\HH)}, \] 
and so the corresponding part of the sum in \eqref{eq:sumkappa} contributes a term $\ll \xi^{1/2} \log \xi$ to the integral in \eqref{eq:C12} by the Cauchy--Schwarz inequality and Lemma~\ref{lem:Ib}. 

To deal with the remaining part of the sum in \eqref{eq:sumkappa}, we note first that the terms corresponding to points $\kappa_{j_i}$ lying above say the horizontal line $y=r/10$, with $r$ the radius of $D(\tau)$, add up to at most $O(\xi^{1/4}) \|f\|_{H^2(\HH)}$. Hence we may remove these terms from \eqref{eq:sumkappa} and then, thanks to Lemma~\ref{lem:cancel}, repeat essentially word for word the proof for the case $i=2$. We omit the details of this argument.
\end{proof}

\subsubsection{Proof of the upper bound in \texorpdfstring{\eqref{eq:sqrsum}}{1.7} of Theorem \texorpdfstring{\ref{thm:sqrsum}}{1.3}} We start from the formula
    \begin{equation} \label{eq:l2sum} e^{-\pi} \sum_{n\le \xi} |f_n(x)|^2\le
        \frac12\int_{-1}^{1} |F_g(t+i/\xi)|^2 dt ,\end{equation}
where as before $g(z):=e^{i\pi x z}$. 
By duality, we have then
    \begin{equation} \label{eq:duality} \int_0^\infty \sum_{n\le \xi} |f_n(x)|^2 dx\le \frac{e^{\pi}}{2} \sup_{\| h\|_2=1} \left| \int_{0}^{\infty}
    \int_{-1}^{1} F_g(t+i/\xi) h(t,x) \, dt dx \right| ,\end{equation}
where
    \[ \| h\|_2^2\coloneqq \int_{0}^{\infty} \int_{-1}^1 |h(t,x)|^2 dt dx. \]
By
Fubini's theorem, the function
    \[ H_t(z) := \int_{0}^{\infty} h(t,x) e^{\pi i x z } dx  \]
will be in the Hardy space $H^2(\HH)$ for almost all $t$ when $\| h \|_2=1$, and we have
    \begin{equation}\label{eq:Planch}
    \int_{-1}^1 \|H_t\|_{H^2(\HH)}^2 dt =2 \end{equation}
by the Plancherel identity. The double integral on the right-hand side of \eqref{eq:duality} can be written as
    \begin{equation}\label{eq:intH}
    \int_{0}^{\infty} \int_{-1}^{1} F_g(t+i/\xi) h(t,x) dt dx = \int_{-1}^1
    F_{H_t}(t+i/\xi) dt.
    \end{equation}
Invoking~\eqref{eq:start2} with $f=H_{t}$ and using Lemma~\ref{lem:CauchyF}, we find that
    \[ |F_{H_t}(t+i/\xi)| \xi^{-1/4} \ll  \max(\mathbf{I}(\tau)^{-3/4},\mathbf{I}(\tau)^{1/4})\,\|H_t \|_{H^2(\HH)}+  \mathbf C(t+i/\xi)\, \| H_t \|_{H^2(\HH)}.\]
Integrating and applying the Cauchy--Schwarz inequality along with Lemma~\ref{lem:Ib} and Lemma~\ref{lem:Carl}, we find that
    \[ \int_{-1}^1 |F_{H_t}(t+i/\xi)|  dt  \ll \xi^{1/2} \log \xi \, \Big(
    \int_{-1}^{1} \| H_t \|_{H^2(\HH)}^2 \Big)^{1/2} .\]
 Now recalling \eqref{eq:intH} and \eqref{eq:Planch} and returning to \eqref{eq:duality}, we get the desired bound
    \[ \int_0^\infty \sum_{n\le \xi} |f_n(x)|^2 dx \ll \xi \log^2 \xi.  \]

\section{Further estimates of \texorpdfstring{$f_n$}{f\_n} and extraneous zeros close to 0} \label{sec:globalzeros}
In this section, we will prove~\eqref{eq:cartw} and~\eqref{eq:tomininf} of Theorem~\ref{thm:global} (note that~\eqref{eq:negint} directly follows from~\eqref{eq:aapprox}). We will then use the former bound along with \eqref{eq:negint} to establish (iv) of Theorem~\ref{thm:zeros}. We begin  with our asymptotic estimate \eqref{eq:tomininf} for negative arguments of~$f_n$.

\subsection{Estimate of \texorpdfstring{$f_n(x)$}{f\_n(x)} for \texorpdfstring{$x<0$}{x<0}}
\label{sec:fnasymp}
Our aim is to prove~\eqref{eq:tomininf} of Theorem~\ref{thm:global}, namely,
\begin{equation}
       \label{eq:asymp1}
        f_n(-x) \sim \frac{e^{2\pi \sqrt{nx}}}{2\sqrt{n}}\,,
        \qquad x \to +\infty\,.
  \end{equation}
First, let us record a few simple uniform estimates for $g_n$ in the fundamental domain. From~\eqref{eq:aapprox} it follows that
    \begin{equation*} 
    |g_n(\tau)| \le  e^{\pi n y}(1+O(e^{\pi n(1-y)}))\,,\qquad \tau=x+iy,\quad y\ge1,
    \end{equation*}
and from~\eqref{eq:atildeapprox} one gets a rough estimate $|g_n(\tau)|\ll e^{2\pi n}$ for $\tau=x+iy$ in $\bF$ with $y\le 1$. Thus in the fundamental domain, we have 
    \begin{equation} \label{eq:gnglobalbound2}
    |g_n(\tau)|\ll e^{\pi n\max(2,y)}\,,\qquad \tau=x+iy.
    \end{equation}
    
We will apply a version of the circle method. We fix $n$, and define $\eps \coloneqq \sqrt{n/x}$. We will assume that $x\ge 10n$. Then
    \[f_n(-x) =
    \frac{e^{\pi x\eps}}{2}\int_{-1}^{1}g_n(t+i\eps)e^{-\pi i xt} dt
    +\sin(\pi x)\int_{0}^{\eps}g_n(1+it)e^{\pi xt} dt\,.\]
Note that the second integral is $\ll e^{2\pi n+\pi \sqrt{nx}}$, so it will be negligible compared to $e^{2\pi \sqrt{nx}}$. Thus it is enough to estimate the first integral. For this we split the interval $[-1,1]=I_1\sqcup I_2$, where
    \[I_1\coloneqq \Big\{t\in[-1,1] \colon \bH(t+i\eps)\le
    \frac{1}{2\eps}\Big\}\,.\]
From~\eqref{eq:gnglobalbound2} we get $|\eps^{3/4}g_n(z)|=|\im (\tau_z)^{3/4}g_n(\tau_z)| \ll \bH(t+i\eps)^{3/4}e^{\pi n\max(2,\bH(t+i\eps))}$, and hence
    \[\Big|\frac{e^{\pi x\eps}}{2}\int_{I_1}g_n(t+i\eps)e^{-\pi i xt} dt\Big|
    \ll e^{\frac{3}{2}\pi \sqrt{nx}}\,.\]
From the proof of~\cite[Proposition~6.6]{BRS}, it is easy to see that $I_2=(-\eps,\eps)$. Thus, it is enough to show that, for $x\to\infty$,
    \[A = \int_{-\eps}^{\eps}g_n(t+i\eps)e^{-\pi i x(t+i\eps)} dt
    \sim \frac{e^{2\pi \sqrt{nx}}}{\sqrt{n}}\,.\]
After applying the transformation $z\mapsto -1/z$ to $g_n$ we get
    \[A = \int_{-h+ih}^{h+ih}g_n(z)(z/i)^{-1/2}e^{\pi i x/z} dz\,, \]
where $h=\tfrac{\eps^{-1}}{2}$. Since $h\to \infty$ as $x\to \infty$,
we have $g_n(z)=e^{-\pi i nz}+o(1)$ for $z=t+ih$, so the problem is reduced to showing that
    \[\int_{-h+ih}^{h+ih}	(z/i)^{-1/2}e^{-\pi i nz+\pi i x/z} dz \sim
    \frac{e^{2\pi \sqrt{nx}}}{\sqrt{n}}\,,\]
which after rescaling $z$ by $2h$ reduces to the study of the asymptotics of
    \[\int_{\frac{-1+i}{2}}^{\frac{1+i}{2}}(z/i)^{-1/2}e^{T(z/i+i/z)} dz\,,\qquad T\to \infty\,.\]
This can be easily proved using the saddle point method if we choose the
contour	of integration to be $\gamma_1\cup \gamma_2\cup \gamma_3$ going
through	$\frac{-1+i}{2}\to \frac{-1+i\sqrt{3}}{2} \to \frac{1+i\sqrt{3}}{2}\to \frac{1+i}{2}$, where $\gamma_1$ and $\gamma_3$
are straight line segments and $\gamma_2$ is an arc of the unit circle
centered at $0$. Then the integral over $\gamma_1$ and $\gamma_3$ is trivially bounded by $Ce^{T\sqrt{3}}$, while for $\gamma_2$ we calculate
    \[\int_{\gamma_2}(z/i)^{-1/2}e^{T(z/i+i/z)} dz 
    = \int_{-\pi/6}^{\pi/6}\cos(t/2)e^{2T\cos(t)} dt
    \sim e^{2T}\sqrt{\pi/T}\,,\quad T\to\infty,\]
by a simple application of the Laplace method. This proves~\eqref{eq:asymp1}.

\subsection{A global estimate for \texorpdfstring{$f_n$}{f\_n}}
\label{sec:globalestimate}
We will now extend the above proof technique to establish \eqref{eq:cartw} of Theorem~\ref{thm:global}, i.e., the bound 
\begin{equation} \label{eq:global} f_n(x+iy) \ll  n^{\frac14} (1+\log^2n) e^{\pi |y|} e^{2\pi \sqrt{n} \operatorname{Re} \sqrt{-z}},\end{equation}
which holds    uniformly in $z$ and $n$.
    
We know by Lemma~\ref{prop:fnapprox} that for $\re \sqrt{z}>2\sqrt{n}$
    \[|f_n(z)| \ll |\sin(\pi z)| \le e^{\pi|y|}.\]
Let us now assume that $x\le -n^2$ and $|y|\le 4\sqrt{n|x|}$. 
We start again from the representation
    \begin{equation} \label{eq:fn} f_n(z) =
    \frac{e^{-\pi z\eps}}{2}\int_{-1}^{1}g_n(t+i\eps)e^{\pi i zt} dt
    -\sin(\pi z)\int_{0}^{\eps}g_n(1+it)e^{-\pi zt} dt\,.\end{equation}
where we set $\eps=\sqrt{n/|x|}$. Trivially, the second term on the right-hand side of \eqref{eq:fn} satisfies
    \begin{equation*} 
    \sin(\pi z)\int_{0}^{\eps}g_n(1+it)e^{-\pi zt} dt \ll e^{\pi |y|} e^{\pi(2n+\varepsilon|x|)}.
    \end{equation*}
which is bounded by $e^{\pi|y|+3/2\pi \sqrt{n|x|}}$. Following the rest of the argument from Section~\ref{sec:fnasymp} with trivial modifications, we obtain the bound
    \[|f_n(\tau)| \ll e^{\pi |y|}e^{2\pi \sqrt{n|x|}}\,, \qquad x\le -n^2,\quad  |y|\le 4\sqrt{n|x|}\,.\]
Collecting the above estimates (noting that for $\re w>0$ we have $\re\sqrt{w}\ge \sqrt{\re w}$), we arrive at the bound
    \begin{equation} \label{eq:uniform} f_n(z) \ll e^{\pi |y|} e^{2\pi \sqrt{n} \operatorname{Re} \sqrt{-z}}, \end{equation}
which holds uniformly when $\re \sqrt{z} \ge 2\sqrt{n}$ or $x \le -n^2$. Here $\sqrt{-z}$ is defined in the slit plane $\mathbb{C} \setminus [0,\infty)$ and is positive when $z$ is negative. This bound makes sense for all $z$ since $\operatorname{Re} \sqrt{-z}$ becomes zero when $z$ tends to a positive number. 

Finally, to establish \eqref{eq:global}, we set
    \[ \Omega(n):=\left\{z: \ \re \sqrt{z} < 2\sqrt{n},\; \re z>-n^2\right\} \smallsetminus [0,4n) \]
and consider the function
    \[ G(z):=f_n(z) e^{-2\pi \sqrt{n} \sqrt{-z} } . \]
By \eqref{eq:negint} and the fact that $f_n(m)=0$ for positive integers $m\neq n$, we have control of the behavior of this function at the integers. In particular, the function 
    \[ G_0(z):=z \sum_{k\in \mathbb Z \setminus \{0\}} \frac{G(k)}{k} \frac{\sin \pi (z-k)}{\pi (z-k)} . \]
is well defined and satisfies $G_0(z)\ll e^{\pi |y|}$. Then the function $(G(z)-G_0(z))/\sin \pi z $ is $\ll n^{\frac14} \log^2 n$ on the boundary of $\Omega(n)$. Hence, by the maximum modulus principle, it has the same bound in $\Omega(n)$. Since $G_0(z)\ll e^{\pi|y|}$, the desired result \eqref{eq:global} follows.

\subsection{Extraneous zeros close to \texorpdfstring{$0$}{0} in the right half-plane}
We will now use the above global estimate along with \eqref{eq:negint} to  prove (iv) of Theorem~\ref{thm:zeros}. We begin by applying Jensen's formula to the circle of radius $R\ge 2$ centered at $-1$. By \eqref{eq:negint} and \eqref{eq:cartw} of Theorem~\ref{thm:global}, we then get
    \[ \int_0^R \frac{n(t)}{t} dt \le  \log n + 2R + 4\sqrt{nR} + C \]
for an absolute constant $C$, where $n(t)$ is the number of zeros of $f_n$ in the disc $|z+1|\le t$. This gives the upper bound in (i).

The lower bound requires more work. We will get it by counting zeros in the disc $|z|\le Mr$ for a suitable $M>1$ in two different ways. We will assume that $r$ is a positive integer. We set
    \[ \varphi_-(z):=- M^2r \frac{r+z}{M^2r+z} \]
and notice that $\varphi_-$ is a self-map of $|z|\le Mr$ and that $\varphi_-(0)=-r$. Let $n_-(t)$ be the counting function for the zeros $z_j$ of $f_n(\varphi_-(z))$ in the disc $|z|\le t$, where we only count zeros $z_j$ with $\re \varphi_-(z_j) \le 0$.
By Jensen's formula and \eqref{eq:negint}, we find that we have arrived at 
    \begin{equation} \label{eq:Jensen} 2\pi \sqrt{nr}-\frac{1}{2}\log n +C+\int_0^{Mr} \frac{n_-(t)}{t} dt \le
        \frac{1}{2\pi} \int_0^{2\pi} \log |f_n(Mre^{i\theta})| \big|\varphi_-'(Mre^{i\theta})\big| d\theta \end{equation}
for an absolute constant $C$. Since
    \[ \frac{M-1}{M+1}\le \left|\varphi_-'(Mre^{i\theta})\right|\le \frac{M+1}{M-1},\]
we see that
    \begin{align*}  \int_0^{2\pi} \log |f_n(Mre^{i\theta})|& \big|\varphi_-'(Mre^{i\theta})\big| d\theta \le \int_0^{2\pi} \log |f_n(Mre^{i\theta})| d\theta  \\
        & +  \frac{2}{M-1} \int_0^{2\pi} |\log f_n(Mre^{i\theta})| d\theta .\end{align*}
We now apply the subharmonic mean value inequality to
    \[ z \mapsto \log \Big|f_n\big(-M^2r^2\frac{1+z}{M^2r^2+z}\big)\Big| \] 
for the circle $|z|=Mr$. Invoking again \eqref{eq:cartw} and \eqref{eq:negint} for $m=1$, we then get
    \[ \int_0^{2\pi} |\log f_n(Mre^{i\theta})| d\theta \le C \sqrt{Mr} \]
for an absolute constant $C$. Returning to \eqref{eq:Jensen}, we find that
    \begin{equation} \label{eq:low} \frac{1}{2\pi} \int_0^{2\pi} \log |f_n(Mre^{i\theta})| d\theta-\int_0^{Mr} \frac{n_-(t)}{t} dt \ge c\sqrt{nr}\end{equation}
for an absolute constant $c$ if $M$ is large enough.

We next apply Jensen's formula to $f_n(\varphi_+(z))$, where
    \[ \varphi_+(z):=M^2r^2 \frac{z+r+\alpha}{M^2r^2+(r+\alpha)z}, \]
where $0<\alpha<1$ is chosen so that $f_n(r+\alpha)\neq 0$. Let $n_+(t)$ be the counting function for the zeros $z_j$ of $f_n(\varphi_+(z))$ in the disc $|z|\le t$, where we only count zeros $z_j$ with $\re \varphi_+(z_j) >0$. Using crucially that the zeros of $f_n$ in the left half-plane contribute less in Jensen's formula for $f_n(\varphi_+(z))$ than they do in the formula for $f_n(\varphi_-(z))$, we obtain in a similar way as above that
    \[ \frac{1}{2\pi} \int_0^{2\pi} \log |f_n(Mre^{i\theta})| d\theta-\log|f_n(r+\alpha)|+O(M^{-1/2} \sqrt{rn} )-\int_0^{Mr} \frac{n_+(t)}{t} dt \le \int_0^{Mr} \frac{n_-(t)}{t} dt. \]
Taking into account \eqref{eq:low} and also \eqref{eq:outsidepw}, we therefore find that
    \begin{equation} \label{eq:almost} \int_0^{Mr} \frac{n_+(t)}{t} dt \ge c' \sqrt{rn} - \log |f_n(r+\alpha)| \end{equation}
for an absolute constant $c'$ if $M$ is large enough.

We will finally show that \eqref{eq:almost} along with another application of   Jensen's formula yields
    \begin{equation} \label{eq:final} \int_{mr}^{Mr} \frac{n_+(t)}{t} dt \ge c' \sqrt{rn} \end{equation}
for an absolute constant $0<m<1$. This will imply the desired bound $n_+(Mr)\gg \sqrt{rn}$. To prove \eqref{eq:final}, we let $n_{r+\alpha}(t)$ denote the number of zeros of $f_n$ in the disc $|z-r-\alpha|\le t$ and note that
    \begin{equation} \label{eq:localJ} \log |f_n(r+\alpha)|= \frac{1}{2\pi} \int_0^{2\pi} \log |f_n(r+\alpha+mre^{i\theta})| d\theta - \int_0^{mr} \frac{n_{r+\alpha}(t)}{t} dt. \end{equation}
A computation shows that $n_+(t)\le n_{r+\alpha}(t)$ for $t\le mr$ if $m$ small enough. Hence combining \eqref{eq:almost} with \eqref{eq:localJ}, we see that
    \[ \int_{mr}^{Mr} \frac{n_+(t)}{t} dt \ge c' \sqrt{rn} - \frac{1}{2\pi} \int_0^{2\pi} \log |f_n(r+\alpha+mre^{i\theta})| d\theta \]
when $m$ is small enough. Now using \eqref{eq:cartw} and estimating crudely, we find that
    \[ \int_0^{2\pi} \log |f_n(r+\alpha+mre^{i\theta})| d\theta \ll m \sqrt{rn}, \]
where the implicit constant is absolute. Hence we obtain \eqref{eq:final} if $m$ is small enough.


\section{The basis functions do not give a Riesz basis for \texorpdfstring{$\mathcal H$}{H}}
\label{sec:RB}

This short section deals with a problem that arises naturally from the work of Kulikov, Nazarov, and Sodin \cite{KNS}. We recall from the introduction that $\mathcal{H}$ is defined as the collection of functions $f$ satisfying
\[ \| f\|_{\mathcal{H}}^2\coloneqq \int_{-\infty}^\infty \big(|f(x)|^2 +|\widehat{f}(x)|^2\big)(1+x^2) dx 
<  \infty.\]
This space was used in \cite{KNS}, where it was shown that all the Fourier uniqueness pairs $(\Lambda, W)$ constructed there yield frames in $\mathcal{H}$ (see \cite[Cor. 2]{KNS}. To be precise, this means that the reproducing kernels associated with $\Lambda$ and the Fourier transforms of the reproducing kernels associated with $W$, when normalized, constitute a frame in $\mathcal{H}$. In our case, it is natural to replace $\mathcal H$ by its subspace $\mathcal{H}_{\operatorname{e}}$ consisting of even functions in $\mathcal H$ and ask if the corresponding sequence in $\mathcal{H}_{\operatorname{e}}$ associated with the distinguished sequence $\{\sqrt{n}: n\ge 0\}$ enjoys the same property. We will now see that this is not the case. To this end, we let $K_x$ be the reproducing kernel of $\mathcal H_{\operatorname{e}}$ at $x$ so that
\[ f(x)=\langle f, K_x \rangle_{\mathcal H}\] 
for every $f$ in $\mathcal{H}_e$. The following is a corollary to Theorem~\ref{thm:moment}.
\begin{corollary}\label{cor:RB} The functions $a_0/\|a_0\|_{\mathcal H}$, 
$a_n/\|a_n\|_{\mathcal{H}}$, $\widehat{a}_n/\|\widehat{a}_n\|_{\mathcal{H}}$, $n=1,2,\ldots$, do not constitute a Riesz basis for $\mathcal{H}_{\operatorname{e}}$. 
Equivalently, the functions $(K_0+\widehat{K}_0)/\|K_0\|_{\mathcal H}$,
$K_{\sqrt{n}}/\|K_{\sqrt{n}}\|_{\mathcal H}$, $\widehat{K}_{\sqrt{n}}/\|K_{\sqrt{n}}\|_{\mathcal H}$, $n=1,2,\ldots$, do not constitute a Riesz basis for $\mathcal{H}_e$. \end{corollary}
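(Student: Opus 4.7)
The plan is to identify the biorthogonal dual of $\{a_0, a_n, \widehat{a}_n\}_{n\ge 1}$ in $\mathcal{H}_{\operatorname{e}}$ with the system $\{K_0+\widehat K_0, K_{\sqrt n}, \widehat K_{\sqrt n}\}_{n\ge 1}$, and then to contradict the standard necessary condition that a normalized Riesz basis $\{\tilde e_k\}$ of a Hilbert space has a uniformly bounded biorthogonal sequence. Since the Riesz basis property is invariant under multiplying each element by a bounded nonzero scalar and since the biorthogonal of a Riesz basis is itself a Riesz basis after renormalization, the two formulations in the corollary are equivalent; both reduce to the question of whether the product $\|a_n\|_{\mathcal H}\cdot\|K_{\sqrt n}\|_{\mathcal H}$ stays bounded as $n\to\infty$.

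First I would verify the biorthogonality. Because the inner product on $\mathcal{H}_{\operatorname{e}}$ is invariant under the Fourier transform, the reproducing kernels satisfy $\langle f, K_{x_0}\rangle_{\mathcal H}=f(x_0)$ and $\langle f,\widehat K_{x_0}\rangle_{\mathcal H}=\widehat f(x_0)$ for every $f$ in $\mathcal{H}_{\operatorname{e}}$. The interpolation identities $a_n(\sqrt m)=\delta_{n,m}$ and $\widehat{a}_n(\sqrt m)=0$ for $m\ge 1$ then immediately show that $K_{\sqrt n}$ is biorthogonal to $a_n$ and $\widehat K_{\sqrt n}$ to $\widehat{a}_n$ for $n\ge 1$, while the boundary identities \eqref{eq:poisson} force the combination $K_0+\widehat K_0$ as the biorthogonal partner of $a_0$. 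Fourier invariance also gives $\|\widehat{a}_n\|_{\mathcal H}=\|a_n\|_{\mathcal H}$ and $\|\widehat K_{\sqrt n}\|_{\mathcal H}=\|K_{\sqrt n}\|_{\mathcal H}$.

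If the normalized system of the corollary were a Riesz basis, evaluating the frame identity $\sum_k|\langle f,\tilde e_k\rangle_{\mathcal H}|^2\asymp\|f\|_{\mathcal H}^2$ at $f=K_{\sqrt n}/\|K_{\sqrt n}\|_{\mathcal H}$ and observing that biorthogonality kills every term except the one corresponding to $a_n/\|a_n\|_{\mathcal H}$ yields $1/(\|a_n\|_{\mathcal H}^2\|K_{\sqrt n}\|_{\mathcal H}^2)\asymp 1$; hence $\|a_n\|_{\mathcal H}\cdot\|K_{\sqrt n}\|_{\mathcal H}\asymp 1$ uniformly in $n$. I would then refute this by bounding each factor below. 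For $\|a_n\|_{\mathcal H}$, the decomposition $a_n=(b_n^{+}+b_n^{-})/2$ with $\widehat{b_n^{\pm}}=\pm b_n^{\pm}$ gives $|a_n|^2+|\widehat{a}_n|^2=\tfrac12(|b_n^{+}|^2+|b_n^{-}|^2)$, so that $\|a_n\|_{\mathcal H}^2\ge \tfrac14\|b_n^{+}\|_{\mathcal H}^2$. Since $\widehat{b_n^{+}}=b_n^{+}$, the norm $\|b_n^{+}\|_{\mathcal H}^2$ reduces to a single weighted $L^2$ integral; the change of variables $x=u^2$ (using $b_n^{+}(u)=f_n(u^2)$), together with the restriction to $x\in[n/8,n]$ and the second-moment estimate \eqref{eq:fnsecmom}, then produces $\|a_n\|_{\mathcal H}^2\gg \sqrt n\,\log n$.

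For $\|K_{\sqrt n}\|_{\mathcal H}^2$, I would test against an even bump: fixing $\phi\in C_c^\infty((-1,1))$ with $\phi(0)=1$ and setting $h=1/\sqrt n$, the function $f_h(x)=\phi((x-\sqrt n)/h)+\phi((x+\sqrt n)/h)$ satisfies $f_h(\sqrt n)=1$, while direct estimates give $\int|f_h|^2(1+x^2)\,dx\ll nh$ and $\int|\widehat{f_h}|^2(1+\xi^2)\,d\xi\ll h+h^{-1}$, whence $\|f_h\|_{\mathcal H}^2\ll \sqrt n$. Therefore $\|K_{\sqrt n}\|_{\mathcal H}^2\ge 1/\|f_h\|_{\mathcal H}^2\gg 1/\sqrt n$. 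Combining, $\|a_n\|_{\mathcal H}^2\cdot\|K_{\sqrt n}\|_{\mathcal H}^2\gg \log n\to\infty$, contradicting the uniform bound above. The only delicate point in this plan is the identification of the biorthogonal element at the special index $n=0$, where the Poisson-type identities \eqref{eq:poisson} are essential; the analytic estimates are then routine given \eqref{eq:fnsecmom}.
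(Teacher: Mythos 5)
Your proposal is correct, and its overall architecture is the same as the paper's: identify $K_0+\widehat K_0$, $K_{\sqrt n}$, $\widehat K_{\sqrt n}$ as the biorthogonal system via the interpolatory and Poisson-type properties of $a_n$, derive $\|a_n\|_{\mathcal H}^2\gg\sqrt n\log n$ from the second-moment estimate \eqref{eq:fnsecmom} (equivalently, Lemma~\ref{prop:fnapprox} plus Theorem~\ref{thm:moment}) after the change of variables $x=u^2$, and then contradict the fact that a Riesz basis forces $\|a_n\|_{\mathcal H}\,\|K_{\sqrt n}\|_{\mathcal H}\asymp1$. The one genuine difference is how the reproducing-kernel norm is handled: the paper quotes the external result of Zelent~\cite{Z} giving the precise asymptotics $\|K_{\sqrt n}\|_{\mathcal H}\sim\sqrt{\pi/2}\,n^{-1/4}$, whereas you prove only the one-sided bound $\|K_{\sqrt n}\|_{\mathcal H}\gg n^{-1/4}$ by testing the kernel against an explicit bump $f_h$ concentrated near $\pm\sqrt n$ at scale $h=1/\sqrt n$; since only this lower bound enters the contradiction, your argument is self-contained and slightly more economical, at the cost of losing the sharp constant (which is irrelevant here). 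You also replace the paper's Lemma~\ref{lem:RB} by the lower frame inequality applied to $K_{\sqrt n}/\|K_{\sqrt n}\|_{\mathcal H}$, which is essentially the same fact in different clothing, and it has the small advantage of disproving both formulations directly (test with $K_{\sqrt n}$ for the first, with $a_n$ for the second) rather than passing through the equivalence. Two cosmetic points: take $\phi$ even (or define $f_h(x)=\phi((|x|-\sqrt n)/h)$) so that your test function actually lies in $\mathcal H_{\operatorname{e}}$, and note that the parallelogram identity gives $\|a_n\|_{\mathcal H}^2\ge\tfrac14\|b_n^{+}\|_{\mathcal H}^2$ exactly as you state, matching \eqref{eq:lowerbn}.
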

We recall here that $(f_n)$ is a Riesz basis for a separable Hilbert space $H$ if $f_n=Te_n$ for some orthonormal basis $(e_n)$ for $H$ and some bounded invertible operator $T$ on $H$. The following fact is an immediate consequence of this definition. 
\begin{lemma} \label{lem:RB} Suppose that $( f_n )_{n=1}^\infty$ is a Riesz basis for a separable Hilbert space $H$. Then $\| f_n\|_{H}\asymp 1$. Moreover, there exists a unique sequence $(g_n)_{n=1}^\infty$ which is also a Riesz basis for $H$ and satisfies $\langle f_m,g_n \rangle_H=\delta_{m,n}$ for all $m$ and $n$. 
\end{lemma}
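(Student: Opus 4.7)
The strategy is to unpack the definition of a Riesz basis and transport everything back to the orthonormal setting via the conjugating operator $T$. By hypothesis there exist an orthonormal basis $(e_n)_{n=1}^\infty$ of $H$ and a bounded invertible operator $T\colon H\to H$ such that $f_n=Te_n$ for all $n$. All the claims will be read off from properties of $T$, $T^*$, and their inverses.

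For the two-sided bound on $\|f_n\|_H$, I would simply observe that $\|f_n\|_H=\|Te_n\|_H\le\|T\|$ on the one hand, while on the other
\[1=\|e_n\|_H=\|T^{-1}f_n\|_H\le\|T^{-1}\|\,\|f_n\|_H,\]
so that $\|T^{-1}\|^{-1}\le\|f_n\|_H\le\|T\|$, which is exactly $\|f_n\|_H\asymp 1$.

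For existence of the biorthogonal sequence, I would set
\[g_n\coloneqq (T^*)^{-1}e_n,\qquad n\ge 1,\]
noting that $(T^*)^{-1}=(T^{-1})^*$ is bounded and invertible. Then
\[\langle f_m,g_n\rangle_H=\langle Te_m,(T^*)^{-1}e_n\rangle_H=\langle e_m,T^*(T^*)^{-1}e_n\rangle_H=\langle e_m,e_n\rangle_H=\delta_{m,n},\]
and because $(e_n)$ is an orthonormal basis and $(T^*)^{-1}$ is a bounded invertible operator, $(g_n)$ is itself a Riesz basis for $H$ by the definition recalled just before the lemma.

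For uniqueness, suppose $(g_n')$ is another sequence in $H$ with $\langle f_m,g_n'\rangle_H=\delta_{m,n}$ for all $m,n$. Then $h_n\coloneqq g_n-g_n'$ satisfies $\langle f_m,h_n\rangle_H=0$ for every $m$. Since $(f_m)$ is a Riesz basis it is in particular complete in $H$ (its linear span is dense, because $(e_m)$ is total and $T$ is a bounded invertible operator, hence has dense range). Therefore $h_n=0$ for every $n$, i.e., $g_n'=g_n$. This completes the proof, and the expected main obstacle — ensuring that the $g_n$ are actually well defined and form a Riesz basis rather than just a biorthogonal system — is handled precisely by the explicit formula $g_n=(T^*)^{-1}e_n$.
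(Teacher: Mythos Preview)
Your proof is correct and is exactly the standard argument that the paper has in mind: the paper does not write out a proof at all, stating only that the lemma ``is an immediate consequence of this definition,'' and your unpacking via $T$, $T^{-1}$, and $(T^*)^{-1}$ is precisely that immediate consequence. Your uniqueness argument is in fact slightly stronger than what is literally asserted (you show the biorthogonal sequence is unique among all sequences, not just among Riesz bases), which is fine.
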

The sequence $K_0+\widehat{K}_0$, $K_{\sqrt{n}}, \widehat{K}_{\sqrt{n}}$, $n=1,2,\ldots$, is biorthogonal to the sequence of basis functions appearing in Corollary~\ref{cor:RB}. Hence Lemma~\ref{lem:RB} yields the equivalent formulation of the corollary in terms of reproducing kernels. To relate our result to that of \cite{KNS}, we note that if this biorthogonal sequence were a frame, it would have to be a Riesz basis, because we get an incomplete sequence on the removal of any one of its elements \cite[p. 90]{Gr}.
\begin{proof}[Proof of Corollary~\ref{cor:RB}] We begin by reiterating the point just made about biorthogonality. Indeed, in view of the interpolatary properties of the functions $a_n$ and $\widehat{a}_n$ (see Theorem A), we see that 
\[ \langle a_n, K_{\sqrt{m}} \rangle_{\mathcal H}=\delta_{m,n} 
\quad \text{and} \quad \langle \widehat{a}_n, K_{\sqrt{m}} \rangle_{\mathcal H}=0\]
when $m\ge 1$. Hence we have, up to normalization, identified a subsequence of the biorthogonal sequence whose existence is required by Lemma~\ref{lem:RB}.  We will now show that Lemma~\ref{lem:RB} is violated for this subsequence. 

We find that 
\begin{align} \nonumber  \| a_n\|^2_{\mathcal{H}} & 
\ge \frac14\int_{\sqrt{n}/2}^{\sqrt{n}} \big(|b_n^+(x)+b_n^-(x)|^2+|b_n^+(x)-b_n(x)|^2\big)(1+x^2) dx \\ & \ge \frac14 \int_{\sqrt{n}/2}^{\sqrt{n}} |b_n^+(x)|^2(1+x^2) dx \gg \sqrt{n} \log n , \label{eq:lowerbn} \end{align}
where we in the last step made a change of variable and used Lemma~\ref{prop:fnapprox} and Theorem~\ref{thm:moment}. 
On the other hand, the norm of point evaluation at $\sqrt{n}$ in $\mathcal{H}_{\operatorname{e}}$ is $\sim \sqrt{\pi/2}\,n^{-1/4}$, as follows from \cite[Thm. 5 and Lem. 7]{Z} and the fact that if $\tilde{K}_x$ is the reproducing kernel at $x$ of $\mathcal H$, then $K_x(y)=\tfrac12 (\tilde{K}_x(y)+\tilde{K}_{x}(-y))$. Thus if the vectors $a_n/\|a_n\|_{\mathcal{H}}$, $\widehat{a}_n/\|a_n\|_{\mathcal{H}}$ constituted a Riesz basis for $\mathcal{H}_{\operatorname{e}}$, then we would have
\begin{equation} \label{eq:pointev}  1=a_n(\sqrt{n}) \asymp \| a_n \|_{\mathcal{H}}\ n^{-1/4} \end{equation}
in view of Lemma~\ref{lem:RB}.
We now arrive at the desired conclusion, since \eqref{eq:pointev} is incompatible with \eqref{eq:lowerbn}.
\end{proof}

\section{Numerics and open questions}
\label{sec:numerics}

\subsection{Numerical calculation of \texorpdfstring{$f_n(x)$}{f\_n(x)}}
Since getting precise estimates for $f_n$ is difficult, it is interesting to get numerical data to obtain more insight into the behavior of these functions. One approach to computing $f_n(x)$ is to directly use the integral representation~\eqref{eq:fndef1}. This works relatively well for small values of~$n$ (roughly, for $n\le 100$), but becomes impractical for large $n$: the integrand is expensive to compute, highly oscillatory, and there is a lot of cancellation, so integration must be performed at very high precision.

An alternative approach is to compute the values $f_n(x)$ for a fixed non-negative value of~$x$ using their defining property, namely, the functional equation
    \[\sum_{n\ge0}f_n(x)e^{\pi i n\tau} + (\tau/i)^{-1/2}\sum_{n\ge0}f_n(x)e^{\pi i n(-1/\tau)} = e^{\pi i x \tau} + (\tau/i)^{-1/2}e^{\pi i x(-1/\tau)}\,.\]
The idea is to replace the above functional equation by a finite-dimensional system of linear equations on $f_n(x)$, $n\le N$, that one obtains by truncating the infinite sums and positing that the truncated equation holds for $\tau=\tau_0,\dots,\tau_N$ for suitably chosen $\tau_j$. Choosing $\tau_j$, $j=0,\dots,N$ to be equally spaced points on the line segment $[-1+\frac{10}{N}i,1+\frac{10}{N}i]$ (we chose the height of the line heuristically), we obtain a surprisingly numerically stable system of equations that can be solved even at machine precision and for very large values of~$N$. 

This method follows the procedure described by Hejhal in~\cite{Hejhal1992} to compute Maass cusp forms. We remark that more sophisticated versions of this method have been developed (see \cite{Hejhal1999,stroemberg,BMR}). These methods however assume that the imaginary part of the points along the line segment is smaller than the imaginary part of the points transformed via the functional equation and ensure that the linear system is diagonally dominant. This assumption does not hold in our setting due to the existence of the second cusp which has non-periodic boundary condition and therefore does not offer an expansion in powers of~$e^{\pi i \tau}$.

Although we cannot provide any rigorous error bounds for the values obtained using this method, for small~$n$ they are in perfect agreement with rigorous high-precision numerics for $f_n(x)$ done using~\eqref{eq:fndef1}. We additionally varied the height of the line segment to obtain heuristic estimates of the precision of the coefficients.

In Figure~\ref{fig:f500a} from the introduction we show a part of the plot of $f_{500}(x)$ obtained using the above method. In particular, one can observe the regular behavior of $f_n(x)/\sin(\pi x)$ for large $x$, in line with~\eqref{eq:fn9}. For smaller $x$ the values of $f_n(x)$ tend to be more chaotic, and~\eqref{eq:fn9} no longer gives a good approximation, but despite this the values remain small for all~$n$. In Figure~\ref{fig:bulk} we show simultaneous plots of $f_n(x)$ for $0\le x\le 2$ and $100\le n\le 150$, revealing that on the interval $[0,2]$ the values of this collection of functions never exceed~$1$ in absolute value. 

\begin{figure}[ht]
    \includegraphics[width=16cm,height=6.5cm]{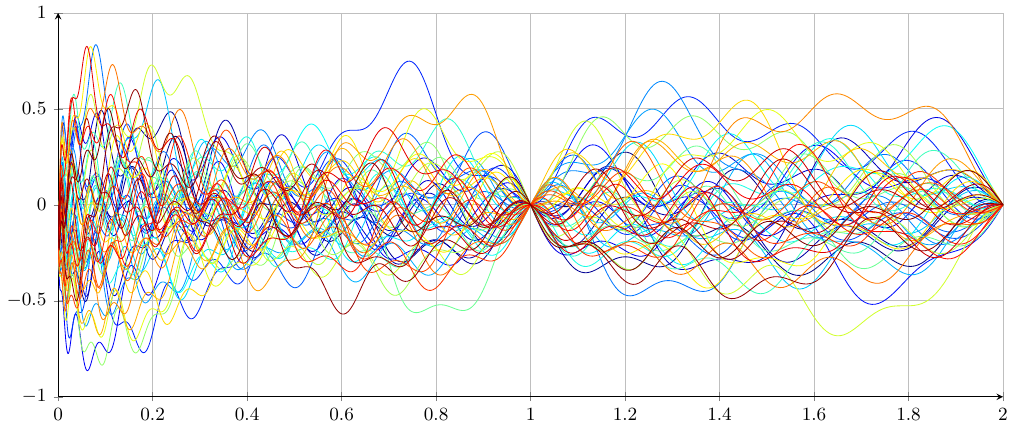}
    \caption{Plots of $f_{n}$ for $100\le n \le 150$.}
    \label{fig:bulk}
\end{figure}

\subsection{Questions and conjectures} 
Based on a numerical computation of the first 10000 basis functions, we have found very few instances of $f_n$ at positive arguments taking values larger than 1 in absolute value and no instances where the value was larger than $2$. It is therefore tempting to conjecture the following.
\begin{conjecture}
    There exists $C>0$ such that $|f_n(x)|<C$ holds for all $n\in\Z_{\ge0}$, $x\ge 0$.
\end{conjecture}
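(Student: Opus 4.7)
The argument splits into a direct regime $x\geq n/8$ and a hard regime $0\leq x<n/8$. For $x\geq n/8$, the conjecture follows immediately from Theorem~\ref{thm:positivereallarge}. On $n/8\leq x\leq n-\sqrt n$ one has $|f_n(x)|\leq |\sin\pi(x-n)|\cdot n^{-1/2}\big(\tfrac{n-x}{\sqrt n}\big)^{0.216}\ll n^{-0.392}$ after bounding $(n-x)\leq n$; on $|x-n|\leq\sqrt n$, the identity $2\sqrt n(\sqrt x-\sqrt n)=(x-n)(1+O(1/\sqrt n))$ turns the main term in~\eqref{eq:pw} into $\sin\pi(x-n)/(\pi(x-n))\cdot(1+O(1/\sqrt n))$, which is uniformly bounded, while the error $O(|\sin\pi(x-n)|/\sqrt n)$ is at most $\pi$ since $|x-n|\leq \sqrt n$; on $x\geq n+\sqrt n$, both the main and error terms of~\eqref{eq:pw} are $O(1/\sqrt n)$. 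Thus the whole problem reduces to the uniform bound on $[0,n/8]$.

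On this bulk range, Theorem~\ref{thm:positiverealsmall} gives only $O(n^{1/4}\log^2 n)$, with the factor $n^{1/4}$ coming from the weight $(\im\tau)^{-1/4}$ in the BRS-style pointwise bound on $F_g$ at $\im\tau=1/n$. My plan is first to upgrade Theorem~\ref{thm:sqrsum} to an individual $L^2$ bound
\[
\int_{0}^{\infty}|f_n(x)|^2\,dx \ll \log^c n
\]
for some fixed $c\geq 1$ (the lower bound from Theorem~\ref{thm:moment} and the numerics of Section~\ref{sec:numerics} suggest $c=1$). The averaged argument for Theorem~\ref{thm:sqrsum} is crude because it discards the oscillatory factor $e^{-i\pi nt}$ in the dual integral~\eqref{eq:intH}; I would restore that factor and integrate by parts along the geodesic chain $\tau_0,\ldots,\tau_{\bN(\tau)}$, each integration gaining a factor of the form $(n\eta_i)^{-1}$ from an $H^2$ Carleson measure estimate applied to $F_{H_t}'$ in place of $F_{H_t}$. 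A bounded number of such steps should absorb the $n^{1/4}$ loss. Given the individual $L^2$ bound, a local Plancherel--Polya inequality for entire functions of exponential type $\pi$ immediately yields the pointwise estimate $|f_n(x)|\ll \log^{c/2}n$ on the interior of $[0,n/8]$; the endpoint $x=0$ is trivial via $f_n(0)=\delta_{n0}=0$ for $n\geq 1$, and the other endpoint is where Theorem~\ref{thm:positivereallarge} takes over.

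The final removal of the logarithmic factor, from $O(\sqrt{\log n})$ down to $O(1)$, is the heart of the matter and does not follow from the above. A conceivable route is Fourier self-duality: since $\widehat{b_n^+}=b_n^+$, one has
\[
\|b_n^+\|_\infty \leq \|b_n^+\|_{L^1(\R)} = \int_0^\infty|f_n(t)|\,t^{-1/2}\,dt\,,
\]
but bounding this weighted $L^1$ integral from the individual $L^2$ estimate via Cauchy--Schwarz only yields polynomial powers of $\log n$, still short of $O(1)$. The final gain seems to demand actual cancellation in the Kloosterman sums $S(-m,n,c)$ and $\tilde S(-m,n,c)$ of Section~\ref{sec:rademacher} beyond the Weil-type bound~\eqref{eq:kloostermanpartialsums}, amounting morally to a subconvexity estimate for the half-integral weight $L$-series attached to the weakly holomorphic forms $g_n$. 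This lies outside the scope of current techniques, consistent with the authors' framing of the statement as a conjecture.
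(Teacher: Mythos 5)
This statement is one of the paper's open conjectures: the authors give no proof, and the best bounds they establish in the bulk range are Theorem~\ref{thm:positiverealsmall} (pointwise $\ll n^{1/4}\log^2 n$) and the averaged $L^2$ bound of Theorem~\ref{thm:sqrsum}. Your treatment of the regime $x\ge n/8$ via Theorem~\ref{thm:positivereallarge} is correct (and unsurprising, since there $|f_n|$ is in fact $O(1)$ and mostly $o(1)$), but your program for $0\le x\le n/8$ is not a proof, and two of its steps have genuine gaps beyond the one you concede. First, the individual bound $\int_0^\infty|f_n|^2\,dx\ll\log^c n$ is itself precisely the paper's conjecture~\eqref{eq:sqrsumconj}; the sketched mechanism (restoring $e^{-i\pi nt}$ and gaining factors $(n\eta_i)^{-1}$ by integration by parts along the chain $\tau_0,\dots,\tau_{\bN(\tau)}$) is heuristic and nothing in the paper's Carleson-measure machinery is shown to produce such gains, so you are resting the argument on an unproven strengthening of Theorem~\ref{thm:sqrsum}.

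Second, the step ``individual $L^2$ bound $\Rightarrow$ pointwise bound by a local Plancherel--P\'olya inequality'' fails as stated. There is no inequality of the form $|f(x_0)|^2\le C\int_{x_0-\delta}^{x_0+\delta}|f|^2$ valid for all entire functions of exponential type $\pi$: polynomials (which have exponential type $0$) already violate it, since by the Christoffel-function asymptotics a polynomial of degree $N$ can satisfy $g(x_0)=1$ while $\int_{x_0-\delta}^{x_0+\delta}|g|^2\asymp \delta/N$, so the local evaluation constant degrades without bound. The genuine Plancherel--P\'olya/Paley--Wiener pointwise estimate needs $f\in L^2(\R)$, and $f_n\notin L^2(\R)$ because of the doubly exponential growth~\eqref{eq:tomininf} on the negative axis; the available global substitute~\eqref{eq:cartw} only carries the factor $n^{1/4}\log^2 n$ (plus $e^{2\pi\sqrt{n}\,\re\sqrt{-z}}$, which is not negligible near small positive $x$), so a two-constants or three-lines interpolation between the local $L^2$ bound and~\eqref{eq:cartw} still leaves a power of $n$. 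Thus even granting step~1, the passage to a pointwise $O(\log^{c/2}n)$ bound is not justified; and, as you acknowledge, the final removal of the logarithm is open. In short, the proposal correctly isolates where the difficulty lies but does not constitute a proof, which is consistent with the statement being posed as a conjecture in the paper.
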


Computations of $f_n(x)$ for even larger $n$ reveal a more curious behavior.
In Figure~\ref{fig:histogram} we show the distribution of values $f_{n}(x)n^{1/4}$ for $0\le n\le 50000$. This picture closely resembles that of a normal distribution, and at the very least seems to indicate that the values $f_{n}(x)n^{1/4}$, $n\ge1$ do indeed have a limiting distribution. We do not know if this limiting distribution can be normal (computing empirical moments points to this not being the case), but the following question seems reasonable.
\begin{question}
    Does the sequence $\{n^{1/4}f_n(x)\}_{n\ge0}$ have a statistical distribution with finite moments? If not, is this true for $\{n^{1/4}\log^{\alpha}(n)f_n(x)\}_{n\ge2}$ for some $\alpha\in\R$?
\end{question}

\begin{figure}[ht]
    \includegraphics[width=10cm,height=6cm]{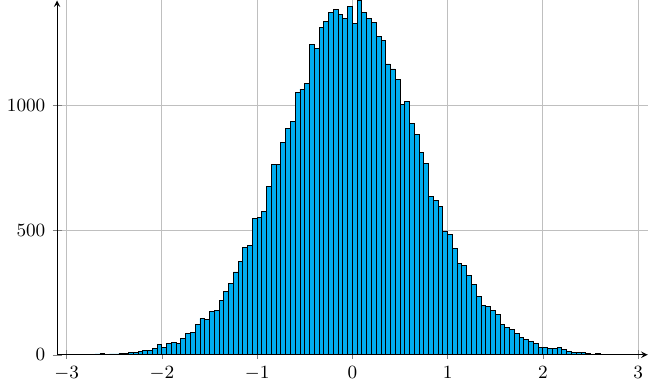}
    \caption{Histogram of $n^{1/4}f_{n}(0.63)$, $0\le n \le 50000$.}
    \label{fig:histogram}
\end{figure}

If the answer is positive, then for a fixed value of $x$, we expect $f_n(x)$ to be on average of size $n^{-1/4}$ (or possibly $n^{-1/4}(\log n)^{\alpha}$ for some $\alpha$). Thus, roughly speaking, we expect the Fourier coefficients of the generating series $F(\tau,x)$ to satisfy an analogue of the Ramanujan--Petersson conjecture. Note that the first estimate in~\eqref{eq:pw} gives an even stronger bound $f_n(x)=O(n^{-0.392})$ for $n/8 < x < n-\sqrt{n}$. In the following conjecture, we propose a further improvement  in this regime.
\begin{conjecture}
For any $\eps,\delta >0$ we have $f_n(x) = O_{\eps,\delta}(n^{-1/2+\delta})$ for $\eps n \le x \le n-\sqrt{n}$.
\end{conjecture}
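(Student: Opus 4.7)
The plan is to reduce the conjecture to a Lindel\"of-type cancellation hypothesis for the exponential sum $\Psi$ from~\eqref{eq:psidef}:
\begin{equation} \label{eq:LindHyp}
\Psi(t) = O_{\delta}(t^{\delta}) \quad \text{as } t\to+\infty, \text{ for every } \delta>0.
\end{equation}
Assuming~\eqref{eq:LindHyp}, the subrange $(1/9+\eta)n \le x \le n-\sqrt{n}$ for any fixed $\eta>0$ is immediate. Indeed, setting $w\coloneqq \sqrt{x}-\sqrt{n}$, so $w \le -1/2$, Lemma~\ref{prop:fnapprox} gives
\[\frac{f_n(x)}{\sin\pi(x-n)} = \frac{\Phi(w)}{\sqrt{n}} + O_{\eta}\bigl(e^{-c\sqrt{n}}\bigr),\]
while the functional equation of Lemma~\ref{prop:Psi} reads $\Phi(w) = \Psi(w^{2}) - \Phi(-w)$. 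Since $-w \ge 1/2$ is bounded below, $\Phi(-w)$ is uniformly bounded, being the exponentially convergent series~\eqref{eq:phidef} evaluated at a positive argument. As $w^{2} \le n$, hypothesis~\eqref{eq:LindHyp} yields $\Phi(w) = O_{\delta}(n^{\delta})$, and combining with $|\sin\pi(x-n)|\le 1$ gives the desired bound $f_{n}(x) = O_{\delta}(n^{-1/2+\delta})$.

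To extend to the full range $\eps n \le x \le n-\sqrt{n}$, I would refine Lemma~\ref{prop:fnapprox} by retaining $k$ terms in the Rademacher-type expansion of $\tilde a_{n,\nu}$ (the lemma immediately following Lemma~\ref{prop:rademacher}) instead of the single term used in the proof of~\eqref{eq:fn9}. Repeating the Laplace transform computation of \S\ref{sec:approx-cn} then gives an approximation
\[\frac{f_n(x)}{\sin\pi(x-n)} = \frac{1}{\sqrt{n}}\sum_{\substack{1\le c<2k+1\\ c\text{ odd}}}\frac{2}{\sqrt{c}}\,\Phi_{c,n}\bigl(\sqrt{x}-\sqrt{n}\bigr) + O_{k}\bigl(e^{-c_{k}\sqrt{n}}\bigr),\]
valid on $\re\sqrt{x} > (\tfrac{1}{2k+1}+\eta)\sqrt{n}$, where
\[\Phi_{c,n}(w) \coloneqq \sum_{\nu\ge0}\frac{\tilde S(-n,\nu,c)\,e^{-2\pi w\sqrt{2(\nu+3/8)}/c}}{\sqrt{2(\nu+3/8)}}.\]
Each $\Phi_{c,n}$ admits a meromorphic continuation (its Mellin transform is a Selberg-type Kloosterman zeta function times $\Gamma$-factors, so the argument of \S\ref{sec:phiac} applies) and satisfies a Hurwitz-type functional equation parallel to Lemma~\ref{prop:Psi}, producing partner exponential sums with coefficients weighted by $\tilde S(-n,\nu,c)$. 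The Weil bound $|\tilde S(-n,\nu,c)| \ll_\eta c^{1/2+\eta}(n,\nu,c)^{1/2}$ together with the same dyadic cancellation driving~\eqref{eq:LindHyp} should deliver $\Phi_{c,n}(w) = O_{\delta}\bigl(c^{1/2+\eta}\,n^{\delta}\bigr)$ uniformly for odd $c<2k+1$, and choosing $k$ so that $(2k+1)^{-2}<\eps$ then covers the required range.

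The principal obstacle is~\eqref{eq:LindHyp} itself, a square-root cancellation assertion for the van der Corput-type sum $\sum_{1\le m\le M} m^{-1/2}\,\ee(3m/8 - t/m)$ at the critical dyadic scale $M\asymp\sqrt{t}$. The proof of Theorem~\ref{thm:lindelof} gives the weaker exponent $0.108$ using the best known exponent pairs from~\cite{TY}, and the \emph{conjecture of exponent pairs}, which admits $(\kappa,\lambda)$ arbitrarily close to $(0,1/2)$, upgrades that same dyadic argument to~\eqref{eq:LindHyp}. As the conjecture of exponent pairs is a long-standing open problem in analytic number theory, the plan establishes our conjecture only conditionally on this classical input; an unconditional attack would seem to require exploiting arithmetic structure in these sums beyond what van der Corput theory provides.
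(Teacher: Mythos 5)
This statement is not a theorem of the paper: it is stated there as a conjecture, and the only thing the authors offer is the remark (immediately after the conjecture) that for $x\in[n/8,\,n-\sqrt{n}]$ it is \emph{equivalent} to $\Psi(t)$ growing more slowly than any positive power of $t$, which in turn would follow from the exponent pair conjecture. Your first reduction, via Lemma~\ref{prop:fnapprox} and the functional equation $\Phi(w)=\Psi(w^2)-\Phi(-w)$ of Lemma~\ref{prop:Psi}, is exactly that equivalence, and it is carried out correctly. But this means your plan does not prove the statement: the hypothesis~\eqref{eq:LindHyp} you condition on is itself an open problem which, on the range $x\ge n/8$, is essentially the same assertion as the conjecture you are trying to establish. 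Deducing it from the exponent pair conjecture replaces one open conjecture by a harder, long-standing one; you acknowledge this, but it should be stated plainly that the proposal is a conditional reduction, not a proof, and that the unconditional content of your first part coincides with what the paper already proves (Theorem~\ref{thm:lindelof} giving the exponent $0.108$ for $\Psi$, hence $f_n(x)=O(n^{-0.392})$ in that range, as recorded in~\eqref{eq:pw}).

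The second part, extending the approximation down to $\eps n\le x$ by keeping $k$ terms of the Rademacher expansion, is a reasonable idea but rests on several unestablished claims: the meromorphic continuation and Hurwitz-type functional equation for the twisted functions $\Phi_{c,n}$ are asserted by analogy rather than proved (the Mellin transform now involves a Kloosterman--Selberg zeta function in the $\nu$-aspect with the modulus $n$ fixed in the twist, and the clean Hurwitz formula used in Lemma~\ref{prop:Psi} has no automatic analogue); and the final bound $\Phi_{c,n}(w)=O_{\delta}(c^{1/2+\eta}n^{\delta})$ requires square-root cancellation in an exponential sum twisted by $\tilde S(-n,\nu,c)$ in the $\nu$-variable, which is yet another unproven cancellation hypothesis on top of~\eqref{eq:LindHyp} -- the Weil-type bound controls individual coefficients but does nothing for the oscillation you need. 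So even conditionally on the exponent pair conjecture, the range $\eps n\le x< n/9$ is not covered by the argument as written. In short: the reduction for $x\ge n/8$ is correct but is the paper's own observation and does not close the problem; the extension below $n/9$ has genuine gaps.
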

For $x\in[n/8,n-\sqrt{n}]$ this conjecture is equivalent to $\Psi(t)$ defined in~\eqref{eq:psidef} growing more slowly than any positive power of $t$. As can be seen from the proof of Theorem~\ref{thm:lindelof}, this would follow from the exponent pair conjecture. We conjecture an even stronger property.
\begin{conjecture}
There exists $k>0$ such that $\Psi(x)=\sum_{n\ge1}\tfrac{2}{\sqrt{n}}\cos(\pi(\tfrac{3n-1}{4}-\tfrac{x}{n}))$ satisfies 
    \[\Psi(x)=O(\log^k|x|)\,,\qquad x\to\pm\infty\,.\]
\end{conjecture}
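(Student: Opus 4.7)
This conjecture appears to lie well beyond what current analytic technology can reach --- it is a Lindel\"of--Riemann-hypothesis-style assertion for an oscillatory sum on the critical line, not a mere $x^\varepsilon$ bound. The natural line of attack is nevertheless dyadic decomposition followed by repeated Poisson (Voronoi-type) inversion.

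Using the $8$-periodicity of $e^{i\pi(3n-1)/4}$ in $n$, first reduce $\Psi(x)$ to a fixed complex-linear combination of sums
\[T_r(x)=\sum_{n\equiv r\,(\bmod 8)}\frac{e^{-i\pi x/n}}{\sqrt n}, \qquad r\in\{0,1,\dots,7\},\]
and split each $T_r$ dyadically into pieces $T_r^{(N)}$ supported on $N\le n<2N$. For the high range $N>x^{2/3}$ the proof of Theorem~\ref{thm:lindelof} (partial summation exploiting that the sums $\sum_{n\le t} e^{i\pi(3n-1)/4}$ are bounded, combined with the trivial bound on the remaining integral) already gives $T_r^{(N)}(x)\ll 1$, so summing over $O(\log x)$ dyadic scales contributes $O(\log x)$ to $\Psi(x)$.

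For the hard range $N\le x^{2/3}$ I would apply Poisson summation on each arithmetic progression. Stationary-phase evaluation of the resulting Fourier transform at $t_m=2\sqrt{x/m}$ produces a dual sum of length $\asymp x/N^2$ with individual terms of size $1/\sqrt m$, oscillating phase $e^{-i\pi\sqrt{xm}}$, and an arithmetic twist (a Gauss sum) coming from the congruence. This dual sum has the same $1/\sqrt m$ amplitude as $\Psi$ but with a square-root rather than reciprocal phase, so one can attempt to iterate: apply Poisson again to the dual sum, and hope that after finitely many iterations all surviving contributions fall into an easy regime handled like the high-range case, with only logarithmic loss at each step.

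The main obstacle is closing this iteration. Each application of Poisson/Voronoi merely moves one across the exponent-pair lattice without unconditionally contracting, so the best one could plausibly obtain by such arguments is a bound comparable to what the exponent pair conjecture gives, namely $\Psi(x)=O(x^\varepsilon)$. Reaching a genuinely \emph{logarithmic} bound requires square-root cancellation with only polylogarithmic loss, a feature characteristic of Riemann-hypothesis-level statements. A realistic attack would probably require a spectral/automorphic interpretation of the generating series $F(\tau,x)$ allowing subconvexity or Kuznetsov/trace-formula input for $L$-functions twisted by $\theta^3$; in its absence I see no route to the polylogarithmic target, which is essentially why this remains only a conjecture.
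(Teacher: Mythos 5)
You are right not to offer a proof: this statement is an open conjecture in the paper, and the authors give no proof of it. Your assessment matches theirs exactly — the unconditional state of the art is Theorem~\ref{thm:lindelof} ($\Psi(x)\ll |x|^{0.108}$, proved by the dyadic decomposition and exponent-pair machinery you describe), the paper remarks that the exponent pair conjecture would only bring this down to $|x|^{\varepsilon}$, and the polylogarithmic bound is supported solely by numerics (compatible with $k=2+\eps$). Your structural points are also accurate: the tail $n>x^{2/3}$ is harmless (in fact the paper gets $O(1)$ for the whole tail in one partial summation, so you can even drop the $O(\log x)$ loss from dyadic splitting there), and the obstruction you identify is the real one — the B-process (Poisson/stationary phase) is an involution on van der Corput data, so iterating it only moves you around the exponent-pair lattice and cannot by itself produce square-root cancellation with polylogarithmic loss. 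Since the paper contains no argument for this conjecture, there is nothing further to compare; your proposal is a correct account of why it remains open.
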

The limited numerical evidence that we have is compatible with the validity of the above conjecture for $k=2+\eps$.

Next, motivated by Theorem~\ref{thm:sqrsum}, in Figure~\ref{fig:l2norms} we plot the numerical values of $\int_{0}^{\infty}|f_n(x)|^2dx$ for $1\le n\le 1000$ in comparison with a logarithmic curve. The similarity between the plots suggests the following.

\begin{figure}[ht]
    \includegraphics[width=13cm,height=8.2cm]{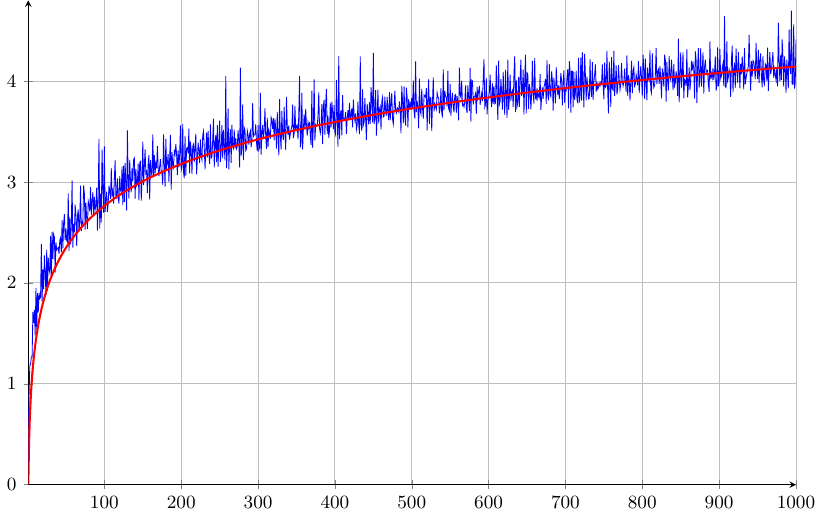}
    \caption{Values $\int_{0}^{\infty}|f_n(x)|^2dx$, compared with $0.6\log n$ for $1\le n\le 1000$.}
    \label{fig:l2norms}
\end{figure}

\begin{conjecture} For $n>1$, we have
    \begin{equation} \label{eq:sqrsumconj}  
    \int_0^\infty |f_n(x)|^2 dx \asymp \log n\,.
    \end{equation}    
\end{conjecture}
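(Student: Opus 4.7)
The lower bound $\int_0^\infty |f_n(x)|^2 dx \gg \log n$ is immediate from~\eqref{eq:fnsecmom}. For the upper bound, the plan is to decompose $[0,\infty)$ into four ranges and estimate each separately, relying on the precise asymptotic information provided by Theorem~\ref{thm:positivereallarge} on the top three ranges.

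On $x > n+\sqrt{n}$, the third case of~\eqref{eq:pw} gives $|f_n(x)|\ll e^{-\pi\sqrt{3}(\sqrt{x}-\sqrt{n})}/\sqrt{n}$, and the substitution $u=\sqrt{x}-\sqrt{n}$ yields an $O(n^{-1/2})$ contribution. On $n-\sqrt{n}\le x\le n+\sqrt{n}$, the middle case of~\eqref{eq:pw} gives $|f_n(x)|^2\ll |\sin\pi(x-n)/(x-n)|^2+O(1/n)$, producing a total contribution of $O(1)$. On $n/8\le x \le n-\sqrt{n}$, one first combines Lemma~\ref{prop:fnapprox} with Lemma~\ref{prop:Psi} to write $\Phi(\sqrt{x}-\sqrt{n})=\Psi((\sqrt{n}-\sqrt{x})^2)-\Phi(\sqrt{n}-\sqrt{x})$, where the latter decays exponentially in $\sqrt{n}-\sqrt{x}$. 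Changing variables to $t=(\sqrt{n}-\sqrt{x})^2$ (so that $|dx/dt|\asymp\sqrt{n/t}$) and averaging the $\sin^2\pi(x-n)$ factor, the second-moment bound of Theorem~\ref{thm:moment} applied dyadically yields
    \[\int_{n/8}^{n-\sqrt{n}}|f_n(x)|^2 dx \ll \frac{1}{\sqrt{n}}\sum_{1\le 2^k\ll n}\frac{1}{\sqrt{2^k}}\int_{2^k}^{2^{k+1}}|\Psi(t)|^2 dt \ll \log n.\]

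The main obstacle lies in the remaining range $[0, n/8]$. The uniform pointwise bound~\eqref{eq:outsidepw} only gives $O(n\log^4 n)$ here, so a dramatic saving of roughly $n/\log^3 n$ is needed. A natural strategy is to refine the Carleson-measure duality argument of Section~\ref{sec:sumsquares} so that it applies to a single~$n$ rather than on average: the proof of the upper bound in Theorem~\ref{thm:sqrsum} loses a $\log\xi$ factor when Lemma~\ref{lem:Carl} is combined with Cauchy--Schwarz, and the analogous loss would need to be eliminated in the pointwise-in-$n$ setting. This presumably requires a finer analysis of the geodesic chain $\tau_0,\ldots,\tau_{\bN(\tau)}$ that exploits cancellation between its successive levels, perhaps coupled with an affirmative answer to the distributional question of Section~\ref{sec:numerics}. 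I expect this to be the hard step and to demand an idea not already present in the paper.
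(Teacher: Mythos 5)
You have not proved this statement, and neither does the paper: it appears there as an open conjecture, supported only by the lower bound and by the numerical comparison with $0.6\log n$ in Section~\ref{sec:numerics}. What the paper actually establishes is exactly the part of your argument that goes through: the lower bound $\gg\log n$ from \eqref{eq:fnsecmom}, and the fact that the contribution from $x\ge n/8$ is $O(\log n)$, which follows, as you say, from Lemma~\ref{prop:fnapprox}, Lemma~\ref{prop:Psi}, Theorem~\ref{thm:moment} and the tail estimates in \eqref{eq:pw}. Your treatment of those three ranges is essentially correct (bounding $\sin^2\pi(x-n)$ by $1$ suffices, the change of variables $t=(\sqrt{n}-\sqrt{x})^2$ has Jacobian $\asymp\sqrt{n/t}$ on $[n/8,n-\sqrt{n}]$, and the dyadic summation of $\int_T^{2T}|\Psi|^2\ll T\log T$ is dominated by the top scale $T\asymp n$, giving $\log n$). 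This is in effect a re-derivation of \eqref{eq:fnsecmom} plus the easy outer ranges; it is consistent with the paper but adds nothing beyond it.

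The genuine gap is the range $[0,n/8]$, and you correctly identify it, but identifying it does not close it. The only pointwise information available there is \eqref{eq:outsidepw}, which yields $\int_0^{n/8}|f_n|^2\,dx\ll n\log^4 n$, and the averaged bound of Theorem~\ref{thm:sqrsum} gives $\ll\xi\log^2\xi$ only after summing over $n\le\xi$; neither can be ``depolarized'' to a single $n$ by the duality/Carleson argument of Section~\ref{sec:sumsquares}, since Lemma~\ref{lem:Carl} controls an integral over $t\in[-1,1]$ (equivalently an average over the family $\{f_n\}_{n\le\xi}$) and says nothing about an individual Fourier coefficient of $F_g$. Your closing remarks (refining the chain analysis, invoking the conjectural value distribution of $n^{1/4}f_n(x)$) are speculation, not a proof sketch with a verifiable mechanism. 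So the proposal should be presented as: lower bound and the contribution from $[n/8,\infty)$ proved, upper bound on $[0,n/8]$ open --- which is precisely the status of the conjecture in the paper.
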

Note that the lower bound $\int_0^\infty |f_n(x)|^2 dx \gg \log n$ is valid because of~\eqref{eq:fnsecmom}, so the only question is about the upper bound. If this conjecture is true, then the lower bound in Theorem~\ref{thm:sqrsum} is optimal up to a multiplicative constant.
    
We saw in the previous section that the normalized basis functions do not constitute a Riesz basis for $\mathcal H_{e}$. The following conjecture suggests that the sequence $\Lambda_0\coloneqq \{\sqrt{n}: \ n\ge 0\}$ should enjoy  a considerably weaker property.
\begin{conjecture} 
$(\Lambda_0, \Lambda_0)$ is a Fourier uniqueness pair for $\mathcal{H}_{\operatorname{e}}$.\end{conjecture}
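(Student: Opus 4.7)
The plan is to deduce the conjecture from a suitable extension of the Radchenko--Viazovska formula (Theorem~A) from even Schwartz functions to all of $\mathcal{H}_{\operatorname{e}}$. Once such an extension is in hand, any $f\in\mathcal{H}_{\operatorname{e}}$ satisfying $f(\sqrt n)=\widehat f(\sqrt n)=0$ for every $n\ge 0$ will automatically make both series on the right-hand side of~\eqref{eq:sqrt} vanish identically, whence $f\equiv 0$, which is the sought uniqueness.

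First I would record the regularity and decay enjoyed by functions in $\mathcal{H}_{\operatorname{e}}$. Since $\|f\|_{\mathcal H}<\infty$ forces $f,\,xf,\,f' \in L^2(\R)$, and likewise for $\widehat f$, a standard one-dimensional Sobolev argument---writing $|f(x)|^2=-\int_x^\infty(|f|^2)'\,dy$ and applying Cauchy--Schwarz---gives $|f(x)|\ll(1+|x|)^{-1/2}\|f\|_{\mathcal H}$, and in particular
\[ |f(\sqrt n)|,\ |\widehat f(\sqrt n)|\ \ll\ n^{-1/4}\|f\|_{\mathcal H}. \]
In view of $\|K_{\sqrt n}\|_{\mathcal H}^2\sim \tfrac{\pi}{2\sqrt n}$ this is the correct order. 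These values furnish the data to be inserted into the formal interpolation series.

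The heart of the argument is to show that for $f\in\mathcal{H}_{\operatorname{e}}$ the formal series
\[ S(f)(x)\ :=\ \sum_{n\ge 0}f(\sqrt n)\,a_n(x)+\sum_{n\ge 0}\widehat f(\sqrt n)\,\widehat{a_n}(x) \]
converges in some weak sense---pointwise almost everywhere, in $L^2_{\mathrm{loc}}$, or as a tempered distribution---and equals $f(x)$. The most natural route is via the generating function of Section~\ref{sec:definitiions}: the series $\sum_n f(\sqrt n)\,e^{\pi i n\tau}$ converges absolutely for $\tau\in\HH$ thanks to the decay of $f(\sqrt n)$, defines an analytic function there, and one would verify that it satisfies the same functional equation~\eqref{eq:Ffeq} as $F(\tau,x)$ after an appropriate averaging; the contour representation~\eqref{eq:fngf} together with a contour shift of the type used in~\cite[\S7]{BRS} should then yield $S(f)=f$. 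An equivalent, ``dual'' formulation is to test against an even Schwartz $g$: applying Theorem~A to $g$ and Parseval's identity one expects to obtain $\langle f,g\rangle=\langle S(f),g\rangle$ for every such $g$, after which the conclusion follows from the density of the even Schwartz functions in $\mathcal{H}_{\operatorname{e}}$.

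The main obstacle is the convergence of $S(f)$. Corollary~\ref{cor:RB} shows that the basis functions cannot form a Riesz basis for $\mathcal{H}_{\operatorname{e}}$, so $S(f)$ cannot converge in $\mathcal H$-norm; and the pointwise bound $|f(x)|\ll|x|^{-1/2}$ is much weaker than the $O((1+|x|)^{-13})$ condition of \cite[Prop.~4]{RV} or the refinement in \cite[Thm.~7.1]{BRS}, so those earlier results do not apply directly. The delicate point will be to extract cancellation from the series, using the fine asymptotics of $a_n$ contained in Theorems~\ref{thm:positiverealsmall}--\ref{thm:positivereallarge} and the averaged $L^2$ bound of Theorem~\ref{thm:sqrsum}. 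Notably, the estimate $\sum_{n\le\xi}\int_0^\infty|f_n(x)|^2\,dx\ll\xi\log^2\xi$ dovetails with the Bessel-type inequality $\sum_n|f(\sqrt n)|^2\sqrt n\ll\|f\|_{\mathcal H}^2$ one expects---equivalent to $\{K_{\sqrt n}/\|K_{\sqrt n}\|\}$ being a Bessel sequence in $\mathcal{H}_{\operatorname{e}}$---so a Cauchy--Schwarz pairing of these two estimates should yield absolute convergence of $S(f)(x)$ for almost every $x$ and reduce the problem to the identification $S(f)=f$ by the functional-equation argument above.
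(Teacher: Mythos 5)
This statement is posed in the paper as an open conjecture; the authors give no proof of it, so there is nothing for your argument to be measured against except its own completeness --- and as it stands it is a strategy outline, not a proof. Your preliminary observations are fine: membership in $\mathcal{H}_{\operatorname{e}}$ does give $f,\,xf,\,f'\in L^2$ and hence $|f(x)|\ll (1+|x|)^{-1/2}\|f\|_{\mathcal H}$, and the norm of point evaluation at $\sqrt n$ is indeed $\asymp n^{-1/4}$. But every step after that is asserted with ``should yield'' or ``one expects'', and those are exactly the points where the difficulty lives. Proving that the interpolation series $S(f)$ converges (in any sense) and equals $f$ for all of $\mathcal{H}_{\operatorname{e}}$ is a far stronger statement than the conjecture itself, and the known proofs of \eqref{eq:sqrt} (Theorem~A, \cite[Prop.~4]{RV}, \cite[Thm.~7.1]{BRS}) rely on absolute convergence coming from polynomial decay of high order; with only $|f(\sqrt n)|\ll n^{-1/4}$ available, no such argument is known, and the paper's own results warn against it: Corollary~\ref{cor:RB} rules out norm convergence, and \eqref{eq:lowerbn} shows $\|a_n\|_{\mathcal H}^2\gg\sqrt n\log n$ while $|f(\sqrt n)|$ can genuinely be of size $n^{-1/4}$, so the individual terms of $S(f)$ need not even be small in $\mathcal H$.

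The specific convergence mechanism you propose also does not work quantitatively. A Cauchy--Schwarz pairing of $\sum_n\sqrt n\,|f(\sqrt n)|^2$ (itself an unproved Bessel-type claim for the non-bandlimited space $\mathcal H_{\operatorname{e}}$) against $\sum_n n^{-1/2}|a_n(x)|^2$ requires the latter weighted sum to be finite, at least for a.e.\ $x$. Theorem~\ref{thm:positiverealsmall} gives only $|f_n(x)|\ll n^{1/4}\log^2 n\,(1+x)^{-1/4}$ for $x\le n/8$, so for fixed $x$ the tail terms $n^{-1/2}|f_n(x)|^2$ are $\ll \log^4 n\,(1+x)^{-1/2}$ and the sum over $n$ diverges badly; even under the conjectural Ramanujan-type bound $|f_n(x)|\asymp n^{-1/4}$ the weighted sum is still logarithmically divergent, and dyadic summation of the averaged bound of Theorem~\ref{thm:sqrsum} with the weight $n^{-1/2}$ likewise diverges. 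So absolute convergence of $S(f)$ cannot be extracted this way: any successful argument must exploit cancellation, which Cauchy--Schwarz discards --- a tension you acknowledge earlier in the paragraph but then set aside. Finally, the identification $S(f)=f$ via the functional equation \eqref{eq:Ffeq} or via testing against Schwartz functions presupposes interchanges of summation and integration that need precisely the missing convergence, so that part of the plan is circular as written. In short, the proposal reduces the conjecture to an extension of the interpolation formula that is itself open (and possibly stronger than the conjecture, since uniqueness does not require summability of the series), and supplies no new mechanism to close that gap.
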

A related question is whether the normalized basis functions could give a Riesz basis for a Fourier invariant Hilbert space that is somewhat smaller than ${\mathcal H}_e$.

\end{document}